\documentclass[12pt]{amsart}
\usepackage{amsthm,amsmath,amsfonts,amssymb}
\usepackage{tikz,tikz-cd}
\usepackage{t1enc}
\usepackage[mathscr]{eucal}
\usepackage{indentfirst}
\usepackage{xcolor}
\usepackage{hyperref}
\numberwithin{equation}{section}
\usepackage[margin=2.9cm]{geometry}
\usepackage{cite}
\usepackage{parskip}
\usepackage{array}
\usepackage{booktabs}
\usepackage{enumerate}

\theoremstyle{plain}
\newtheorem{thm}{Theorem}[section]
\newtheorem{lem}[thm]{Lemma}

\newtheorem{prop}[thm]{Proposition}

 \theoremstyle{definition}
\newtheorem{defn}[thm]{Definition}
\newtheorem{rem}[thm]{Remark}
\newtheorem{?}[thm]{Problem}
\newtheorem{ex}[thm]{Example}

\newcommand{\op}[1]{\operatorname{#1}}
\newcommand{\mb}[1]{\mathbb{#1}}
\newcommand{\mc}[1]{\mathcal{#1}}
\newcommand{\mf}[1]{\mathfrak{#1}}
\newcommand{\mr}[1]{\mathrm{#1}}
\newcommand{\eps}{\varepsilon}
\newcommand{\vphi}{\varphi}

\newcommand{\Gal}{\operatorname{Gal}}
\newcommand{\Char}{\operatorname{char}}

\newcommand{\inj}{\hookrightarrow}

\newcommand{\GW}{\operatorname{GW}}
\newcommand{\W}{\operatorname{W}}
\newcommand{\Proj}{\operatorname{Proj}}
\newcommand{\Tr}{\operatorname{Tr}}

\newcommand{\sign}{\operatorname{sign}}
\newcommand{\rank}{\operatorname{rank}}
\newcommand{\disc}{\operatorname{disc}}
\newcommand{\norm}{\operatorname{norm}}
\newcommand{\Jac}{\operatorname{Jac}}
\newcommand{\codim}{\operatorname{codim}}
\newcommand{\Hom}{\operatorname{Hom}}

\newcommand{\pcoor}[1]{%
  \begingroup\lccode`~=`: \lowercase{\endgroup
  \edef~}{\mathbin{\mathchar\the\mathcode`:}\nobreak}%
  [
  \begingroup
  \mathcode`:=\string"8000
  #1%
  \endgroup 
  ]
}

\DeclareFontFamily{U}{mathx}{\hyphenchar\font45}
\DeclareFontShape{U}{mathx}{m}{n}{
      <5> <6> <7> <8> <9> <10>
      <10.95> <12> <14.4> <17.28> <20.74> <24.88>
      mathx10
      }{}
\DeclareSymbolFont{mathx}{U}{mathx}{m}{n}
\DeclareMathSymbol{\bigtimes}{1}{mathx}{"91}

\begin{document}
\title{An arithmetic enrichment of B\'ezout's Theorem}

\author{Stephen McKean}

\address{Department of Mathematics \\ Duke University \\ Durham \\ NC} 
\email{mckean@math.duke.edu}
\urladdr{services.math.duke.edu/~mckean}

\subjclass[2010]{Primary: 14N15. Secondary: 14F42}
\begin{abstract}
The classical version of B\'ezout's Theorem gives an integer-valued count of the intersection points of hypersurfaces in projective space over an algebraically closed field. Using work of Kass and Wickelgren, we prove a version of B\'ezout's Theorem over any perfect field by giving a bilinear form-valued count of the intersection points of hypersurfaces in projective space. Over non-algebraically closed fields, this enriched B\'ezout's Theorem imposes a relation on the gradients of the hypersurfaces at their intersection points. As corollaries, we obtain arithmetic-geometric versions of B\'ezout's Theorem over the reals, rationals, and finite fields of odd characteristic.
\end{abstract}

\maketitle

\section{Introduction}
In this paper, we study the intersections of $n$ hypersurfaces in projective $n$-space over an arbitrary perfect field $k$. Classically, B\'ezout's Theorem addresses such intersections over an algebraically closed field.

\begin{thm}[B\'ezout's Theorem]\label{thm:bezout}
Fix an algebraically closed field $k$. Let $f_1,\ldots,f_n$ be hypersurfaces in $\mb{P}^n$, and let $d_i$ be the degree of $f_i$ for each $i$. Assume that $f_1,\ldots,f_n$ have no common components, so that $f_1\cap\ldots\cap f_n$ is a finite set. Then, summing over the intersection points of $f_1,\ldots,f_n$, we have
\begin{align}\label{eq:bezout}
\sum_{\text{points}}i_p(f_1,\ldots,f_n)=d_1\cdots d_n,
\end{align}
where $i_p(f_1,\ldots,f_n)$ is the intersection multiplicity of $f_1,\ldots,f_n$ at $p$.
\end{thm}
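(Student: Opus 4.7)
The plan is to reduce the equation to a computation in the Chow ring of projective space. Recall that $A^{\ast}(\mb{P}^n) \cong \mb{Z}[H]/(H^{n+1})$, where $H$ denotes the hyperplane class, and that $H^n = [\mr{pt}]$. Since every hypersurface $f_i$ of degree $d_i$ is linearly equivalent to $d_i$ copies of a hyperplane, its divisor class in $A^1(\mb{P}^n)$ is $d_i H$.

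The assumption that $f_1, \ldots, f_n$ share no common components guarantees that the scheme-theoretic intersection $Z = f_1 \cap \cdots \cap f_n$ is zero-dimensional, so the $f_i$ meet properly. In this proper-intersection setting, the Chow-theoretic product is represented by the weighted cycle
\begin{align*}
[Z] = \sum_p i_p(f_1, \ldots, f_n) \cdot [p],
\end{align*}
where $i_p = \dim_k \mc{O}_{Z,p}$ is the length of the local Koszul quotient. Expanding the product on the other side yields
\begin{align*}
[Z] = [f_1] \cdots [f_n] = d_1 \cdots d_n \cdot H^n = d_1 \cdots d_n \cdot [\mr{pt}],
\end{align*}
and taking degrees produces the identity $\sum_p i_p(f_1, \ldots, f_n) = d_1 \cdots d_n$.

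The main obstacle is to justify that the algebraic intersection multiplicity $i_p$ coincides with the Chow-theoretic local contribution at $p$. On a smooth variety the $f_i$ form a regular sequence near every proper intersection point, so the Koszul complex resolves $\mc{O}_Z$ and the length formula gives what is needed; in broader generality one invokes Serre's Tor formula together with the moving lemma. A more elementary alternative avoids the Chow ring entirely: induct on $n$ with the fundamental theorem of algebra as the base case $n=1$, and in the inductive step degenerate $f_n$ to a union of $d_n$ hyperplanes in general position, invoking continuity of intersection multiplicity under flat deformation to conclude.
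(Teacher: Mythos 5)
Your Chow-ring argument is correct and is essentially the standard classical proof, but note that the paper never proves Theorem~\ref{thm:bezout} at all: it is quoted as classical background (with a footnote pointing to Fulton), and the paper's own machinery recovers it only a posteriori, by a genuinely different route. Namely, the paper computes the Euler number $e(\mc{O}_{d_1,\ldots,d_n})=\frac{d_1\cdots d_n}{2}\cdot\mb{H}$ (Theorem~\ref{thm:euler class-orientable}), proves that the rank of the local degree equals the intersection multiplicity (Proposition~\ref{prop:rank=mult}), and then applies the rank isomorphism $\GW(k)\cong\mb{Z}$ for algebraically closed $k$ (Section~\ref{sec:over C}) to extract Equation~\ref{eq:bezout} from Equation~\ref{eq:main-orientable} --- at the price of the relative-orientability hypothesis $\sum_i d_i\equiv n+1\bmod 2$ and the transversality/genericity assumptions of Theorem~\ref{thm:main-orientable}, which the classical statement does not need. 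Your proof --- $[f_i]=d_iH$ in $A^{*}(\mb{P}^n)\cong\mb{Z}[H]/(H^{n+1})$, the proper intersection represented by $\sum_p i_p[p]$ with $i_p=\dim_k\mc{O}_{Z,p}$ because the local equations form a regular sequence in the regular (hence Cohen--Macaulay) local ring $\mc{O}_{\mb{P}^n,p}$, so that the Koszul complex resolves $\mc{O}_{Z,p}$, followed by a degree count using that closed points have residue field $k$ --- is more elementary, self-contained within classical intersection theory, and matches the paper's Definition~\ref{defn:intersection mult} of $i_p$ exactly; what the paper's heavier approach buys instead is the $\GW(k)$-valued refinement over an arbitrary perfect field, of which the classical count is just the rank. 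Two caveats on your write-up: ``no common components'' does not by itself force $f_1\cap\cdots\cap f_n$ to be finite once $n\geq 3$ (three surfaces in $\mb{P}^3$ can share a line without sharing a component), so finiteness should be read as part of the hypothesis, exactly as the paper implicitly does; and your fallback sketch by degenerating $f_n$ into a union of hyperplanes would need a precise conservation-of-number statement under flat specialization to count as a proof, so the Chow-ring argument should stand as the actual argument.
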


Working over an algebraically closed field is necessary for this result.\footnote{Over non-algebraically closed fields, one may modify Equation~\ref{eq:bezout} by multiplying the intersection multiplicity $i_p(f_1,\ldots,f_n)$ by the degree of the residue field $[k(p):k]$ as described in~\cite[Proposition 8.4]{Ful98}. However, since each point $p$ splits into $[k(p):k]$ points in the algebraic closure of $k$, this simply counts the geometric intersection points as in Theorem~\ref{thm:bezout}.} Indeed, consider the intersection of a conic and a cubic shown in Figure~\ref{fig:conic-cubic}. Over any field, these two curves do not intersect on the line at infinity. Over $\mb{R}$, these two curves intersect exactly twice, with intersection multiplicity one at each of the intersection points. This number falls short of the six complex intersection points, even when counted with multiplicity. The results of this paper include a version of B\'ezout's Theorem over $\mb{R}$, which will impose a relation on the gradients of these curves at their intersection points.

\begin{figure}[h]
\centering
\begin{tikzpicture}
      \draw[->] (-2,0) -- (2,0);
      \draw[->] (0,-2) -- (0,2);
      \draw [cyan, ultra thick] (0,0) circle [radius=1.4142];
      \draw[domain=-1.2:1.2,smooth,variable=\x,red,ultra thick]  plot ({\x},{\x*\x*\x});
\end{tikzpicture}
\caption{A conic and a cubic over $\mb{R}$.}\label{fig:conic-cubic}
\end{figure}
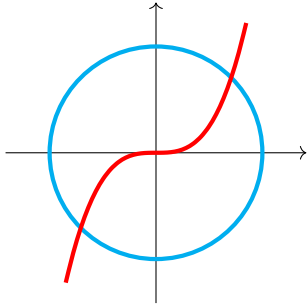

Our approach to generalize B\'ezout's Theorem follows the general philosophy of~\cite{KW17}. Any section $\sigma$ of the vector bundle $\mc{O}_{d_1,\ldots,d_n}:=\bigoplus_{i=1}^n\mc{O}(d_i)\to\mb{P}^n$ determines an $n$-tuple $(f_1,\ldots,f_n)$ of homogeneous polynomials of degree $d_1,\ldots,d_n$, respectively. The vanishing of each $f_i$ gives a hypersurface of $\mb{P}^n$, which we will also denote $f_i$. The section $\sigma$ vanishes precisely when $f_1,\ldots,f_n$ intersect, which suggests a connection to B\'ezout's Theorem.

$\mb{A}^1$-homotopy theory provides a powerful tool with which to study such sections. Morel developed an $\mb{A}^1$-homotopy theoretic analog of the local Brouwer degree~\cite{Mor12}, which Kass and Wickelgren used to study an Euler class $e$ of vector bundles in the context of enumerative algebraic geometry~\cite{KW17}.\footnote{There are various related Euler classes in arithmetic geometry, such as those appearing in \cite{AF16,BM00,Fas08,GI80,Lev17,Mor12}. See~\cite[Section 1.1]{KW17} for a discussion.} When $\mc{O}_{d_1,\ldots,d_n}$ is relatively orientable over $\mb{P}^n$ (that is, when $\sum_{i=1}^n d_i\equiv n+1\mod 2$), we compute $e(\mc{O}_{d_1,\ldots,d_n})$, which gives an equation involving the sum of local $\mb{A}^1$-degrees of a generic section at its points of vanishing. We also give a geometric description of the local $\mb{A}^1$-degree for transverse sections of $\mc{O}_{d_1,\ldots,d_n}$. This geometric information, paired with the equation coming from $e(\mc{O}_{d_1,\ldots,d_n})$, generalizes B\'ezout's Theorem. When $\mc{O}_{d_1,\ldots,d_n}$ is not relatively orientable over $\mb{P}^n$ (that is, when $\sum_{i=1}^n d_i\not\equiv n+1\mod 2$), we give a relative orientation of $\mc{O}_{d_1,\ldots,d_n}$ relative to the divisor $D=\{x_0=0\}$ in the sense of Larson and Vogt~\cite{LV19}. This allows us to compute the local degree of sections that do not vanish on $D$. However, we do not address the question of Euler classes in the non-relatively orientable case.

The local $\mb{A}^1$-degree is valued in the Grothendieck--Witt group $\GW(k)$ of symmetric, non-degenerate bilinear forms over $k$, so our enriched version of B\'ezout's Theorem will be an equality in $\GW(k)$. We assume throughout this paper that $k$ is a perfect field, which ensures that all algebraic extensions of $k$ are separable.

\begin{thm}\label{thm:main-orientable}
Let $\sum_{i=1}^n d_i\equiv n+1\mod 2$, and let $f_1,\ldots,f_n$ be hypersurfaces of $\mb{P}^n$ of degree $d_1,\ldots,d_n$ that intersect transversely. Given an intersection point $p$ of $f_1,\ldots,f_n$, let $J(p)$ be the signed volume of the parallelpiped determined by the gradient vectors of $f_1,\ldots,f_n$ at $p$. Then summing over the intersection points of $f_1,\ldots,f_n$, we have
\begin{align}\label{eq:main-orientable}
\sum_{\text{points}}\Tr_{k(p)/k}\langle J(p)\rangle=\frac{d_1\cdots d_n}{2}\cdot\mb{H},
\end{align}
where $\Tr_{k(p)/k}:\GW(k(p))\to\GW(k)$ is given by post-composing with the field trace.
\end{thm}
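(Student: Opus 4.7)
The plan is to realize the tuple $(f_1,\ldots,f_n)$ as a section $\sigma$ of the rank-$n$ bundle $\mathcal{O}_{d_1,\ldots,d_n}=\bigoplus_{i=1}^n\mathcal{O}(d_i)$ on $\mb{P}^n$ and apply the Euler class formalism of Kass--Wickelgren. The parity assumption $\sum d_i\equiv n+1\pmod{2}$ forces $\det\mathcal{O}_{d_1,\ldots,d_n}\otimes(\det T\mb{P}^n)^{\vee}\cong\mathcal{O}(\sum d_i-n-1)$ to be an even power of $\mathcal{O}(1)$, hence a tensor square, which supplies a relative orientation of the bundle. The Kass--Wickelgren Euler class formula then provides
\begin{equation*}
e(\mathcal{O}_{d_1,\ldots,d_n})=\sum_{p}\Tr_{k(p)/k}\deg^{\mb{A}^1}_p(\sigma)\in\GW(k),
\end{equation*}
summed over the zeros of any section with only simple zeros. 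Proving \eqref{eq:main-orientable} thus reduces to (i) identifying the local $\mb{A}^1$-degree at a transverse intersection with $\langle J(p)\rangle$, and (ii) computing the Euler class to be $\tfrac{d_1\cdots d_n}{2}\cdot\mb{H}$.

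For step (i), I would work on a standard affine chart $\{x_j\neq 0\}$ of $\mb{P}^n$ containing the point $p$, using the trivialization of each $\mathcal{O}(d_i)$ induced by the frame $x_j^{d_i}$. In these coordinates $\sigma$ becomes a morphism from a neighborhood of $p$ to $\mb{A}^n$ with a simple zero, and Morel's local degree formula identifies $\deg^{\mb{A}^1}_p(\sigma)$ with $\langle\det D\sigma(p)\rangle$. A direct comparison between the chart trivialization and the fixed relative orientation shows, modulo an explicit square in $k(p)^{\times}$, that $\det D\sigma(p)$ agrees with the signed parallelepiped volume $J(p)$. When $k(p)\neq k$ the class a priori lives in $\GW(k(p))$ and is pushed down to $\GW(k)$ via the field trace, producing $\Tr_{k(p)/k}\langle J(p)\rangle$.

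For step (ii), I would evaluate $e(\mathcal{O}_{d_1,\ldots,d_n})$ on a convenient section with rational, easily enumerated simple zeros---for example a section built from products of $d_i$ linear forms in the $i$-th coordinate---compute the local Jacobian sign at each such zero, and pair zeros with opposite signs to form copies of $\mb{H}=\langle 1\rangle+\langle -1\rangle$. Because the total geometric zero count equals the classical Bézout number $d_1\cdots d_n$ and relative orientability guarantees independence of the chosen section, the pairing must deliver exactly $\tfrac{d_1\cdots d_n}{2}$ hyperbolic summands, matching the right-hand side of \eqref{eq:main-orientable}.

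The principal obstacle should be step (i): the bookkeeping required to verify that the chart-induced trivialization of $\mathcal{O}_{d_1,\ldots,d_n}$ is compatible with the global relative orientation, so that one genuinely recovers $\langle J(p)\rangle$ rather than a non-square twist, involves tracking the square-root isomorphism $\det\mathcal{O}_{d_1,\ldots,d_n}\otimes(\det T\mb{P}^n)^{\vee}\cong L^{\otimes 2}$ across coordinate patches. A secondary subtlety is ensuring that the transfer used in Kass--Wickelgren's local-degree-to-global-class formula coincides with the field-trace construction appearing in the theorem statement, so that the sum over points $p$ with $k(p)\neq k$ really assembles into a single class in $\GW(k)$.
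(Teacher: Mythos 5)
Your overall architecture is the same as the paper's: the parity hypothesis gives a relative orientation of $\mc{O}_{d_1,\ldots,d_n}$, the Kass--Wickelgren Euler number is the sum of local $\mb{A}^1$-degrees, and at a simple (i.e.\ transverse) zero the local degree is $\Tr_{k(p)/k}\langle\det\operatorname{Jac}\rangle$ computed in a trivialization compatible with the orientation; the bookkeeping you flag in step (i) is exactly what the paper's twisted coordinates $\tilde{\vphi}_i$ and the compatibility lemmas (Lemmas~\ref{lem:compatible-orientable case}, \ref{lem:deg of transverse}) are designed to handle, and the trace form you worry about is the one appearing in \cite[p.~17]{KW17}. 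Two citation-level points you elide: independence of the Euler number from the section is not a formal consequence of relative orientability alone in \cite{KW17} --- one needs the locus of bad sections to have codimension at least $2$ (the paper's Proposition~\ref{prop:minimal prime}) or an appeal to \cite{BW20}.

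The genuine gap is step (ii). Over a general perfect field, ``sign of the Jacobian'' is not an invariant of $\GW(k)$, and for a section split into products of linear forms the local classes $\langle J(p)\rangle$ do \emph{not} cancel in pairs: already for one even-degree factor with roots $0,1,2,4$ the classes at the four zeros are $\langle -2\rangle,\langle 3\rangle,\langle -1\rangle,\langle 6\rangle$ in $\GW(\mb{Q})$, and no two of these sum to $\mb{H}$ (that requires the two entries to multiply to $-1$ modulo squares), even though the total is $2\mb{H}$. Moreover the justification you give --- that the zero count is $d_1\cdots d_n$ and section-independence ``must'' force $\tfrac{d_1\cdots d_n}{2}\mb{H}$ --- is circular: independence only says every good section yields the same unknown class, and rank $d_1\cdots d_n$ is shared by many classes, so some section's Euler number must actually be computed in $\GW(k)$. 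The paper does this with $\sigma=(x_1^{d_1},\ldots,x_n^{d_n})$, whose single zero $\pcoor{1:0:\cdots:0}$ has $\deg_p\sigma=\prod_i\deg_0 x^{d_i}$, combined with the formula for $\deg_0 x^{d}$ and the observation (Remark~\ref{rem:rel orient- not all odd}) that at least one $d_i$ is even, so that $\langle a\rangle\cdot\mb{H}=\mb{H}$ collapses the product to $\tfrac{d_1\cdots d_n}{2}\mb{H}$. Your split-section route can be repaired, but it needs an actual argument: e.g.\ the Jacobian is diagonal for such a section, so the sum of local classes factors as a product of one-variable sums, and if for each even $d_i$ you choose the roots symmetric under negation then the zeros pair as $a\leftrightarrow -a$ with Jacobian values differing by $-1$ times a square, giving $\tfrac{d_i}{2}\mb{H}$ for those factors, after which the same absorption argument finishes. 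As written, the ``pair zeros with opposite signs'' step is an assertion, not a proof, and it fails for a generic choice of linear forms.
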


Taking the rank, signature, and discriminant of the Equation~\ref{eq:main-orientable} gives us B\'ezout's Theorem over $\mb{C},\mb{R}$, and $\mb{F}_q$, respectively. We apply similar techniques to also study B\'ezout's Theorem over $\mb{C}((t))$ and $\mb{Q}$. Kass and Wickelgren showed that Morel's local $\mb{A}^1$-degree is equivalent to a class of Eisenbud, Levine, and Khimshiashvili~\cite{KW19}, which allows us to make the necessary computations without explicitly using $\mb{A}^1$-homotopy theory. This paper fits into the growing field of $\mb{A}^1$-enumerative geometry, which is the  enrichment of classical theorems from enumerative geometry via $\mb{A}^1$-homotopy theory. Related results include \cite{KW17,Lev17,BKW18,SW18,Wen18,KW19,Lev19,LV19,BW20}.

The layout of the paper is as follows. In Section~\ref{sec:notation}, we introduce notation and conventions for the paper. In Section~\ref{sec:rel orientations}, we recall definitions and make computations about {\it relative orientability}, which is necessary for computations and proofs in Sections~\ref{sec:euler class} and \ref{sec:local degree}. In Section~\ref{sec:euler class}, we calculate the Euler class, and we discuss the geometric information carried by the local degree in Section~\ref{sec:local degree}. Finally, we discuss B\'ezout's Theorem over $\mb{C}$, $\mb{R}$, finite fields of odd characteristic, $\mb{C}((t))$, and $\mb{Q}$ in Section~\ref{sec:examples}. To illustrate the sort of obstructions that B\'ezout's Theorem over $\mb{Q}$ provides, we show in Example~\ref{ex:over Q} that if a line and a conic in $\mb{P}^2_\mb{Q}$ meet at two distinct points, and if the area of the parallelogram determined by the normal vectors to these curves at one of the intersection points is a non-square integer $m\neq-1$, then the area of the parallelogram at the other intersection point cannot be an integer prime to $m$.

\subsection{Related work} Chen~\cite[Section 3]{Che84} studies B\'ezout's Theorem in $\mb{P}^{2n}_{\mb{R}}$ as a consequence of a generalized B\'ezout's Theorem over $\mb{C}$~\cite[Theorem 2.1]{Che84}. In particular, Chen discusses that over $\mb{R}$, intersection multiplicities can be negative numbers~\cite[Remark 2.2]{Che84} and shows that if $f,g\in\mb{R}^2$ meet transversely at $p$, then the $\mb{R}$-intersection multiplicity of $f$ and $g$ at $p$ is the sign of the Jacobian of $f$ and $g$~\cite[Proposition 3.1]{Che84}. Our work in Section~\ref{sec:over R} generalizes these latter observations.

\subsection{Acknowledgements}
I am extremely grateful to Kirsten Wickelgren for introducing me to motivic homotopy theory, as well as for her excellent guidance and feedback on this project. I am also grateful to Thomas Brazelton, Srikanth Iyengar, Jesse Kass, Sabrina Pauli, Libby Taylor, and Isabel Vogt for helpful discussions. I thank the anonymous referee for their careful reading and constructive comments, which have improved the clarity of this paper.

\subsection{Conflicts of interest}
The author states that there are no conflicts of interest.

\subsection{Data availability}
Data sharing not applicable to this article as no data sets were generated or analyzed.

\section{Notation and conventions}\label{sec:notation}
Throughout this paper, we let $k$ be a perfect field. We denote projective $n$-space over $k$ by $\mb{P}^n_k=\Proj(k[x_0,\ldots,x_n])$. When the base field is clear from context, we may write $\mb{P}^n$ instead of $\mb{P}^n_k$. Given a rank $r$ vector bundle $E$, the {\it determinant bundle} of $E$ is the $r$-fold wedge product
\[\det{E}=\underbrace{E\wedge\cdots\wedge E}_{r\text{ times}}.\]

\subsection{Standard cover}\label{sec:standard cover}
Let $U_0,\ldots,U_n$ be the standard affine open subspaces of $\mb{P}^n$ given by $U_i=\{\pcoor{p_0:\cdots:p_n}\in\mb{P}^n:p_i\neq 0\}$. Let $\vphi_0,\ldots,\vphi_n$ be the standard local coordinates of $U_0,\ldots,U_n$, where $\vphi_i:U_i\to\mb{A}^n$ is given by $\vphi_i(\pcoor{p_0:\cdots:p_n})=(\frac{p_0}{p_i},\ldots,\frac{p_{i-1}}{p_i},\frac{p_{i+1}}{p_i},\ldots,\frac{p_n}{p_i})$. We call $\{(U_i,\vphi_i)\}$ the {\it standard cover} of $\mb{P}^n$.

\subsection{Twisting sheaves}\label{sec:twisting sheaves}
We denote the twisting sheaf $\mc{O}_{\mb{P}^n}(d\{x_0=0\})$ by $\mc{O}(d)$. Under this definition, we remark that $\mc{O}(d)$ is locally trivialized by $(\frac{x_i}{x_0})^d$ over $U_i$. If $d\geq 0$, the vector space of global sections $H^0(\mb{P}^n,\mc{O}(d))$ is isomorphic to the vector space of homogeneous polynomials in $k[x_0,\ldots,x_n]$ of degree $d$. Indeed, given $h\in k[x_0,\ldots,x_n]_{(d)}$, we have a global section $\sigma$ of $\mc{O}(d)$, which is given in the local trivializations by $\sigma|_{U_i}=h/x_i^d$.

In this paper, we will often consider global sections of $\mc{O}_{d_1,\ldots,d_n}:=\bigoplus_{i=1}^n\mc{O}(d_i)$. By the above identification of $H^0(\mb{P}^n,\mc{O}(d))$ and $k[x_0,\ldots,x_n]_{(d)}$, we may thus write a section as $\sigma=(f_1,\ldots,f_n)$, where $f_i\in k[x_0,\ldots,x_n]_{(d_i)}$.

\subsection{Grothendieck--Witt groups}\label{sec:gw}
The {\it Grothendieck--Witt group} $\GW(k)$ is the group completion of the monoid of isomorphism classes of symmetric, non-degenerate bilinear forms over $k$, where the group law is given by direct sum. The Grothendieck-Witt group is in fact a ring, where multiplication comes from multiplication of bilinear forms. See e.g.~\cite{Lam05} for the case where $\Char{k}\neq 2$.

Given $a\in k^\times$, we denote by $\langle a\rangle$ the isomorphism class of the bilinear form $(x,y)\mapsto axy$. It is a fact that $\GW(k)$ is generated by all such $\langle a\rangle$, subject to the following relations~\cite[Chapter II, Theorem 4.1]{Lam05}:
\begin{enumerate}[(i)]
\item $\langle ab^2\rangle=\langle a\rangle$ for all $a,b\in k^\times$.
\item $\langle a\rangle\langle b\rangle=\langle ab\rangle$ for all $a,b\in k^\times$.
\item $\langle a\rangle+\langle b\rangle=\langle a+b\rangle+\langle ab(a+b)\rangle$ for all $a,b\in k^\times$ such that $a+b\neq 0$.
\item $\langle a\rangle+\langle -a\rangle=\langle 1\rangle+\langle -1\rangle$ for all $a\in k^\times$.
\end{enumerate}

Relation (iv) is actually redundant, but it is useful to know. We will use the notation $\mb{H}:=\langle 1\rangle+\langle -1\rangle$, as this bilinear form will appear frequently. By relations (ii) and (iv), we note that $\langle a\rangle\cdot\mb{H}=\mb{H}$ and $\mb{H}\cdot\mb{H}=2\cdot\mb{H}$. Both the ring multiplication of $\GW(k)$ and the integer multiplication of $\GW(k)$ as an abelian group under addition may be denoted by $\cdot$ or by juxtaposition of symbols, whichever is presently more visually appealing or less confusing.

\section{Relative orientations}\label{sec:rel orientations}
Let $f_1,\ldots,f_n$ be hypersurfaces in $\mb{P}^n$. B\'ezout's Theorem equates a {\it fixed value} with the sum (over the intersection locus of $f_1,\ldots,f_n$) of some {\it geometric information} about $f_1,\ldots,f_n$ at each intersection point. Classically (that is, over an algebraically closed field), the {\it fixed value} is the product of the degrees of each $f_i$, and the {\it geometric information} at each intersection point is the intersection multiplicity of $f_1,\ldots,f_n$. Over an arbitrary perfect field, an $\mb{A}^1$-homotopy theoretic Euler class will give us a particular bilinear form as our {\it fixed value}, and the local $\mb{A}^1$-degree will give us our {\it geometric information}. We compute the Euler class in Section~\ref{sec:euler class}, and we discuss the local degree in Section~\ref{sec:local degree}. In this section, we recall definitions and make computations that are required for Sections~\ref{sec:euler class} and \ref{sec:local degree}. We first start with some definitions.

\begin{defn}\label{defn:rel orientation}\cite[Definition 16]{KW17}
A {\it relative orientation} of a vector bundle $V$ on a scheme $X$ is a pair $(L,j)$ of a line bundle $L$ and an isomorphism $j:L^{\otimes 2}\to\Hom(\det\mc{T}X,\det{V})$, where $\mc{T}X\to X$ is the tangent bundle. We say that $V$ is {\it relatively orientable} if $V$ has a relative orientation. Moreover, on an open set $U\subseteq X$, a section of $\Hom(\det\mc{T}X,\det{V})$ is called a {\it square} if its image under $H^0(U,\Hom(\det\mc{T}X,\det{V}))\cong H^0(U,L^{\otimes 2})$ is a tensor square of an element in $H^0(U,L)$.
\end{defn}

The relative orientability of the vector bundle $\mc{O}_{d_1,\ldots,d_n}\to\mb{P}^n$ depends on $d_1,\ldots,d_n,$ and $n$ in the following way.

\begin{prop}\label{prop:rel orientable}
The vector bundle $\mc{O}_{d_1,\ldots,d_n}\to\mb{P}^n$ is relatively orientable if and only if $\sum_{i=1}^n d_i\equiv n+1\mod 2$.
\end{prop}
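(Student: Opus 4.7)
The plan is to reduce relative orientability to a parity check on Picard degrees, using that $\Pic(\mb{P}^n) \cong \mb{Z}$ is generated by $\mc{O}(1)$. The target sheaf $\Hom(\det \mc{T}\mb{P}^n, \det \mc{O}_{d_1,\ldots,d_n})$ is a line bundle on $\mb{P}^n$, hence isomorphic to some $\mc{O}(m)$, and such a line bundle is the tensor square of another line bundle on $\mb{P}^n$ if and only if $m$ is even (in which case the unique square root is $\mc{O}(m/2)$). So the entire proposition reduces to computing $m$ and checking its parity.

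The main computation proceeds in three steps. First, since the determinant of a direct sum is the tensor product of determinants, and each summand $\mc{O}(d_i)$ is already a line bundle, I would note
\[
\det \mc{O}_{d_1,\ldots,d_n} \;=\; \bigotimes_{i=1}^n \mc{O}(d_i) \;=\; \mc{O}\Bigl(\sum_{i=1}^n d_i\Bigr).
\]
Second, I would invoke the Euler sequence $0 \to \mc{O} \to \mc{O}(1)^{\oplus (n+1)} \to \mc{T}\mb{P}^n \to 0$ and take determinants to obtain $\det \mc{T}\mb{P}^n = \mc{O}(n+1)$ (the standard anticanonical identification). Third, combining these,
\[
\Hom(\det \mc{T}\mb{P}^n, \det \mc{O}_{d_1,\ldots,d_n}) \;\cong\; \mc{O}(-(n+1)) \otimes \mc{O}\Bigl(\sum_{i=1}^n d_i\Bigr) \;\cong\; \mc{O}\Bigl(\sum_{i=1}^n d_i - (n+1)\Bigr).
\]

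To finish, I would argue both directions. If $\sum d_i \equiv n+1 \pmod 2$, then $\sum d_i - (n+1) = 2e$ for an integer $e$, so taking $L = \mc{O}(e)$ and $j$ to be any chosen isomorphism $L^{\otimes 2} \cong \mc{O}(2e)$ provides a relative orientation. Conversely, if a relative orientation $(L,j)$ exists, then $L = \mc{O}(e)$ for a unique $e \in \mb{Z}$, and the isomorphism $j$ forces $2e = \sum d_i - (n+1)$ in $\Pic(\mb{P}^n) \cong \mb{Z}$, so $\sum d_i - (n+1)$ is even.

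There is no real obstacle here: the only slightly delicate point is the bijection between line bundles on $\mb{P}^n$ and integers, which lets me pass from an isomorphism of line bundles to an equation of degrees and conclude that $m$ must be even. Once the three line bundle computations above are in place, the proposition follows immediately from $\Pic(\mb{P}^n) \cong \mb{Z}$.
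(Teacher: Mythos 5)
Your proposal is correct and follows essentially the same route as the paper: identify $\det\mc{O}_{d_1,\ldots,d_n}\cong\mc{O}(\sum_i d_i)$ and $\det\mc{T}\mb{P}^n\cong\mc{O}(n+1)$, so that $\Hom(\det\mc{T}\mb{P}^n,\det\mc{O}_{d_1,\ldots,d_n})\cong\mc{O}(-n-1+\sum_i d_i)$, and then observe that a line bundle $\mc{O}(m)$ on $\mb{P}^n$ is a tensor square if and only if $m$ is even. Your version merely makes explicit two points the paper leaves implicit (the Euler sequence computation of $\det\mc{T}\mb{P}^n$ and the use of the Picard group of $\mb{P}^n$ being infinite cyclic for the "only if" direction), which is fine.
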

\begin{proof}
Since $\det\mc{O}_{d_1,\ldots,d_n}\cong\bigotimes_{i=1}^n\mc{O}(d_i)$, we have that
\begin{align*}
\Hom(\det\mc{T}\mb{P}^n,\det\mc{O}_{d_1,\ldots,d_n})&\cong\det\mc{O}_{d_1,\ldots,d_n}\otimes(\det\mc{T}\mb{P}^n)^\vee\\
&\cong\mc{O}(-n-1+{\textstyle\sum}_{i=1}^n d_i).
\end{align*}
Thus $\mc{O}(-n-1+\sum_{i=1}^n d_i)$ is a square if and only if $-n-1+\sum_{i=1}^n d_i$ is even, in which case $\mc{O}(-n-1+\sum_{i=1}^n d_i)\cong\mc{O}((-n-1+\sum_{i=1}^n d_i)/2)^{\otimes 2}$.
\end{proof}

\begin{rem}\label{rem:rel orient- not all odd}
We note that if $\sum_{i=1}^n d_i\equiv n+1\mod 2$, then at least one of $d_1,\ldots,d_n$ must be even. Indeed, suppose all of $d_1,\ldots,d_n$ are odd. Then $d_i\equiv 1\mod 2$, so $\sum_{i=1}^n d_i\equiv n\mod 2$.
\end{rem}

When $V$ is not relatively orientable, we have the following definition of Larson and Vogt.

\begin{defn}\cite[Definition 2.2]{LV19}
A {\it relative orientation relative to an effective divisor $D$} of a vector bundle $V$ on a smooth projective scheme $X$ is a pair $(L,j)$ of a line bundle $L$ and an isomorphism $j:L^{\otimes 2}\to\Hom(\det\mc{T}X,\det{V})\otimes\mc{O}(D)$.
\end{defn}

In $\mb{A}^1$-homotopy theory, one frequently uses the {\it Nisnevich topology}. For this paper, we will only need the following definitions.

\begin{defn}\cite[Definition 17]{KW17}
Let $X$ be a scheme of dimension $n$, and let $U\subseteq X$ be an open neighborhood of a point $p\in X$. An \'etale map $\vphi:U\to\mb{A}^n_k$ is called {\it Nisnevich coordinates} about $p$ if $\vphi$ induces an isomorphism between the residue field of $p$ and the residue field of $\vphi(p)$.
\end{defn}

\begin{defn}\cite[Definition 19]{KW17}
Let $V$ be a vector bundle on a scheme $X$, and let $U\subseteq X$ be an open affine subset. Given Nisnevich coordinates $\vphi$ on $U$ and a relative orientation $(L,j)$ of $V$, we have a distinguished basis element of $\det\mc{T}X|_U$. A local trivialization of $V|_U$ is called {\it compatible} with the Nisnevich coordinates and relative orientation if the element of $\Hom(\det\mc{T}X|_U,\det{V}|_U)$ taking the distinguished basis element of $\det\mc{T}X|_U$ to the distinguished basis element of $\det{V}|_U$ (determined by the specified local trivialization of $V|_U$) is a square (in the sense of Definition~\ref{defn:rel orientation}).
\end{defn}

We can generalize the above definition to discuss {\it compatibility} in the case of a relative orientation relative to an effective Cartier divisor.

\begin{defn}
Let $V$ be a vector bundle on a smooth projective scheme $X$, and let $U\subseteq X$ be an open affine subset. Given Nisnevich coordinates $\vphi$ on $U$ and a relative orientation $(L,j)$ relative to an effective Cartier divisor $D$, we have a distinguished basis element of $\det\mc{T}X|_U$. A local trivialization of $V|_U$ is called {\it compatible} with the Nisnevich coordinates and relative orientation relative to $D$ if $\alpha\otimes 1_D$ is a square, where $1_D$ is the canonical section of $\mc{O}(D)$~\cite[\href{https://stacks.math.columbia.edu/tag/01WX}{Definition 01WX (2)}]{stacks} and $\alpha$ is the element of $\Hom(\det\mc{T}X|_U,\det{V}|_U)$ taking the distinguished basis element of $\det\mc{T}X|_U$ to the distinguished basis element of $\det{V}|_U$ (determined by the specified local trivialization of $V|_U$).
\end{defn}

We will show that a twist of the standard cover $\{(U_i,\vphi_i)\}$ of $\mb{P}^n$ (see Section~\ref{sec:standard cover}) gives Nisnevich coordinates. This twist will be denoted $\{(U_i,\tilde{\vphi}_i)\}$, with $\tilde{\vphi}_0=\vphi_0$ and
\[\tilde{\vphi}_i(\pcoor{p_0:\cdots:p_n})=((-1)^i\tfrac{p_0}{p_i},\ldots,\tfrac{p_{i-1}}{p_i},\tfrac{p_{i+1}}{p_i},\ldots,\tfrac{p_n}{p_i}).\]
The reason for working with these twisted coordinates instead of the standard coordinates is to ensure compatibility with the local trivializations of $\mc{O}_{d_1,\ldots,d_n}$, as shown in Lemmas~\ref{lem:compatible-orientable case} and~\ref{lem:compatible-non-orientable}. We will also describe the distinguished basis elements of $\det\mc{T}\mb{P}^n|_{U_i}$ and $\det\mc{O}_{d_1,\ldots,d_n}|_{U_i}$ coming from $\tilde{\vphi}_i$ and the local trivialization given in Section~\ref{sec:twisting sheaves}, respectively.

\begin{prop}\label{prop:nisnevich coords}
The twisted covering maps $\tilde{\vphi}_i:U_i\to\mb{A}^n_k$ are Nisnevich coordinates. Moreover, $\vphi_i$ determines the distinguished basis element $(-1)^i\cdot\partial_i:=(-1)^i\bigwedge_{j\neq i}\frac{\partial}{\partial(x_j/x_i)}$ of $\det\mc{T}\mb{P}^n|_{U_i}$ with transition functions $\det{g_{ij}}:=(-1)^{i+j}(\tfrac{x_i}{x_j})^{n+1}$.
\end{prop}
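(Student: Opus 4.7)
My plan is to verify the three assertions of the proposition in order. First, to show that $\tilde\vphi_i$ gives Nisnevich coordinates, I would note that $\tilde\vphi_i=\tau_i\circ\vphi_i$, where $\tau_i$ is the $k$-automorphism of $\mb{A}^n_k$ that negates the first coordinate when $i$ is odd and is the identity when $i$ is even. Since both $\vphi_i$ and $\tau_i$ are isomorphisms of $k$-schemes, so is $\tilde\vphi_i$; in particular it is \'etale and induces isomorphisms on all residue fields automatically.

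Second, for the distinguished basis element of $\det\mc{T}\mb{P}^n|_{U_i}$, the definition demands the unique generator of $\det\mc{T}U_i$ that is sent by $d\tilde\vphi_i$ to the canonical volume form on $\mb{A}^n_k$. Writing $\tilde\vphi_i$ in the local coordinates of $U_i$, the map differs from $\vphi_i$ only by multiplying the $x_0/x_i$ coordinate by $(-1)^i$, so the determinant of its differential acquires precisely a factor of $(-1)^i$ relative to the determinant for $\vphi_i$. Consequently the preimage of the canonical generator under $d\tilde\vphi_i$ is $(-1)^i\partial_i$, as stated.

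Third, for the transition functions on $U_i\cap U_j$, I would set $u_k:=x_k/x_i$ for $k\neq i$ and $v_l:=x_l/x_j$ for $l\neq j$, record the explicit relations $v_i=u_j^{-1}$ and $v_k=u_ku_j^{-1}$ for $k\neq i,j$, and compute the Jacobian determinant of $(v_l)$ with respect to $(u_k)$. The row indexed by $v_i$ has a single nonzero entry $-u_j^{-2}$ in the column indexed by $u_j$. Expanding along this row reduces the calculation to the determinant of a diagonal $(n-1)\times(n-1)$ submatrix whose diagonal entries are all equal to $u_j^{-1}$, producing $\pm u_j^{-(n+1)}=\pm(x_i/x_j)^{n+1}$.

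The main obstacle is tracking the sign. The cofactor sign picked up when expanding along the $v_i$-row depends on the position of $v_i$ in the ordered list $(v_l)_{l\neq j}$ and on the position of $u_j$ in the ordered list $(u_k)_{k\neq i}$, and these positions shift according to whether $i<j$ or $i>j$. A short case check shows that in either case the total sign is $(-1)^{i+j}$, yielding $\det g_{ij}=(-1)^{i+j}(x_i/x_j)^{n+1}$.
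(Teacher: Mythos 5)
Your proposal is correct and follows the same basic route as the paper: the Nisnevich claim is just the observation that $\tilde{\vphi}_i$ is an isomorphism of schemes, the distinguished generator is the preimage of the standard generator of $\det\mc{T}\mb{A}^n_k$ (which picks up the factor $(-1)^i$ from the twisted first coordinate), and the transition function is the determinant of the chart change. The only real difference is how that determinant is organized: the paper writes out the transformation rules expressing each $\partial_{k/i}$ in terms of the $\partial_{l/j}$ and then wedges them, while you expand the Jacobian along the $v_i$-row and track the cofactor positions in the two cases $i<j$ and $i>j$. Your bookkeeping is in fact the more careful of the two. It yields the sign $(-1)^{i+j}$ uniformly, which is the correct general statement (for instance, with $n=2$, $i=0$, $j=2$ one finds $\partial_{1/0}\wedge\partial_{2/0}=+(x_0/x_2)^3\,\partial_{0/2}\wedge\partial_{1/2}$), whereas the paper's displayed intermediate relation $\bigwedge_{k\neq i}\partial_{k/i}=-(\tfrac{x_i}{x_j})^{n+1}\bigwedge_{k\neq j}\partial_{k/j}$ asserts a uniform factor $-1$, which agrees with $(-1)^{i+j}$ only when $i+j$ is odd; the final formula $\det g_{ij}=(-1)^{i+j}(\tfrac{x_i}{x_j})^{n+1}$ is exactly what your cofactor expansion produces. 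The one small thing to make explicit is the justification (as in the cited definition of Nisnevich coordinates) that the distinguished trivialization of $\det\mc{T}\mb{P}^n|_{U_i}$ is obtained by pulling back the standard frame along the \'etale map $\tilde{\vphi}_i$, which is what your ``preimage of the canonical generator'' step is implicitly using.
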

\begin{proof}
By construction, $\tilde{\vphi}_i:U_i\to\mb{A}^n_k$ is an isomorphism, so $\tilde{\vphi}_i$ is \'etale and induces an isomorphism $k(p)\cong k(\vphi(p))$ for all $p\in U_i$. Recall that $\mc{T}\mb{A}^n$ has the standard trivializations $\{\frac{\partial}{\partial x_1},\ldots,\frac{\partial}{\partial x_n}\}$. Since $\tilde{\vphi}_i$ induces an isomorphism $\mc{T}\mb{P}^n|_{U_i}\cong\mc{T}\mb{A}^n$, we may pull back the standard trivializations $\mc{T}\mb{A}^n\to\mb{A}^n$ by $\tilde{\vphi}_i$ to obtain the twisted trivializations $\{(-1)^i\partial_{0/i},\ldots,\partial_{(i-1)/i},\partial_{(i+1)/i},\ldots,\partial_{n/i}\}$, where $\partial_{j/i}=\frac{\partial}{\partial(x_j/x_i)}$. It follows that $\det\mc{T}\mb{P}^n|_{U_i}$ is trivialized by $(-1)^i\bigwedge_{j\neq i}\partial_{j/i}$. Finally, we  consider the transition functions $\det g_{ij}:\det\mc{T}\mb{P}^n|_{U_j}\to\det\mc{T}\mb{P}^n|_{U_i}$. These transition functions will come from the transition functions $g_{ij}:\mc{T}\mb{P}^n|_{U_j}\to\mc{T}\mb{P}^n|_{U_i}$. A few calculus computations show us that, for $k\neq i,j$, we have
\begin{align*}
\partial_{k/i}&=\tfrac{x_i}{x_j}\cdot\partial_{k/j},\\
\partial_{j/i}&=-(\tfrac{x_i}{x_j})^2\cdot\partial_{i/j}-\sum_{k\neq i,j}\tfrac{x_ix_k}{x_j^2}\cdot\partial_{k/j}.
\end{align*}

Thus for fixed $i,j$, we have $\bigwedge_{k\neq i}\partial_{k/i}=-(\frac{x_i}{x_j})^{n+1}\bigwedge_{k\neq j}\partial_{k/j}$. The trivializations $(-1)^i\cdot\partial_i:=(-1)^i\bigwedge_{j\neq i}\partial_{j/i}$ of $\det\mc{T}\mb{P}^n|_{U_i}$ are local trivializations of $\det\mc{T}\mb{P}^n$ compatible with the transition functions $\det g_{ij}=(-1)^{i+j}(\frac{x_i}{x_j})^{n+1}$. In other words, $\det\mc{T}\mb{P}^n|_{U_i}$ is one-dimensional with $(-1)^i\cdot\partial_i$ as its distinguished basis element.
\end{proof}

\begin{prop}\label{prop:local trivs}
The local trivialization $(\frac{x_i}{x_0})^{d_1}\oplus\cdots\oplus(\frac{x_i}{x_0})^{d_n}$ of $\mc{O}_{d_1,\ldots,d_n}|_{U_i}$ determines the distinguished basis element $(\frac{x_i}{x_0})^{d_1+\ldots+d_n}$ of $\det\mc{O}_{d_1,\ldots,d_n}|_{U_i}$ with transition functions $\det{h_{ij}}:=(\tfrac{x_i}{x_j})^{d_1+\ldots+d_n}$.
\end{prop}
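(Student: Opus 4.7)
The plan is to verify both claims by a direct calculation, since for a direct sum of line bundles the determinant is easily described. Unlike the proof of Proposition~\ref{prop:nisnevich coords}, no alternating sign twists are required here, so the argument is essentially bookkeeping.

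For the distinguished basis element, I would invoke the natural isomorphism $\det(L_1 \oplus \cdots \oplus L_n) \cong L_1 \otimes \cdots \otimes L_n$ for line bundles $L_1,\ldots,L_n$, under which $e_1 \wedge \cdots \wedge e_n$ corresponds to $e_1 \otimes \cdots \otimes e_n$. Applying this with $L_k = \mc{O}(d_k)|_{U_i}$ trivialized by $(\tfrac{x_i}{x_0})^{d_k}$ (as recalled in Section~\ref{sec:twisting sheaves}), the distinguished basis element of $\det\mc{O}_{d_1,\ldots,d_n}|_{U_i}$ is the tensor product $\bigotimes_{k=1}^{n} (\tfrac{x_i}{x_0})^{d_k}$, which under the canonical identification $\mc{O}(d_1)\otimes\cdots\otimes\mc{O}(d_n) \cong \mc{O}(d_1+\cdots+d_n)$ becomes $(\tfrac{x_i}{x_0})^{d_1+\cdots+d_n}$, as claimed.

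For the transition functions, the direct sum structure of $\mc{O}_{d_1,\ldots,d_n}$ makes the transition matrix $h_{ij}$ diagonal, with $k$-th diagonal entry the transition for $\mc{O}(d_k)$. Since $(\tfrac{x_i}{x_0})^{d_k} = (\tfrac{x_i}{x_j})^{d_k}\cdot(\tfrac{x_j}{x_0})^{d_k}$, this entry is $(\tfrac{x_i}{x_j})^{d_k}$, following the same convention as in Proposition~\ref{prop:nisnevich coords} that the $U_i$-trivialization equals the transition function applied to the $U_j$-trivialization. Taking determinants yields $\det h_{ij} = \prod_{k=1}^n (\tfrac{x_i}{x_j})^{d_k} = (\tfrac{x_i}{x_j})^{d_1+\cdots+d_n}$. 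There is no genuine obstacle in this argument; the only thing to be attentive to is matching the transition-function convention, and because each summand is already a line bundle, no wedge-product reorderings occur and so the alternating signs from Proposition~\ref{prop:nisnevich coords} do not appear.
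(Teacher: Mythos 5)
Your proof is correct and follows essentially the same route as the paper: trivialize each summand $\mc{O}(d_k)|_{U_i}$ by $(\tfrac{x_i}{x_0})^{d_k}$, identify $\det\mc{O}_{d_1,\ldots,d_n}$ with $\mc{O}(d_1)\otimes\cdots\otimes\mc{O}(d_n)\cong\mc{O}(d_1+\cdots+d_n)$, and take the product of the summand-wise transition functions $(\tfrac{x_i}{x_j})^{d_k}$. Your explicit remark that no sign twists arise (in contrast to Proposition~\ref{prop:nisnevich coords}) is a fair observation but does not change the argument.
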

\begin{proof}
Since $\mc{O}(d)|_{U_i}$ is trivialized by $(\frac{x_i}{x_0})^d$, the vector bundle $\mc{O}_{d_1,\ldots,d_n}|_{U_i}$ is trivialized by $(\frac{x_i}{x_0})^{d_1}\oplus\cdots\oplus(\frac{x_i}{x_0})^{d_n}$. The transition functions $h_{ij}:\mc{O}(d)|_{U_j}\to\mc{O}(d)|_{U_i}$ are given by $(\frac{x_i}{x_j})^d$, so the transition functions $\oplus h_{ij}:\mc{O}_{d_1,\ldots,d_n}|_{U_j}\to\mc{O}_{d_1,\ldots,d_n}|_{U_i}$ are given by $(\frac{x_i}{x_j})^{d_1}\oplus\cdots\oplus(\frac{x_i}{x_j})^{d_n}$. Finally, recall that $\det\mc{O}_{d_1,\ldots,d_n}\cong\mc{O}(d_1)\otimes\cdots\otimes\mc{O}(d_n)\cong\mc{O}(d_1+\ldots+d_n)$. Thus $\det\mc{O}_{d_1,\ldots,d_n}|_{U_i}$ is trivialized by $(\frac{x_i}{x_0})^{d_1}\otimes\cdots\otimes(\frac{x_i}{x_0})^{d_n}\cong(\frac{x_i}{x_0})^{d_1+\ldots+d_n}$, and the transition functions $\det h_{ij}:\det\mc{O}_{d_1,\ldots,d_n}|_{U_j}\to\det\mc{O}_{d_1,\ldots,d_n}|_{U_i}$ are given by $(\frac{x_i}{x_j})^{d_1}\otimes\cdots\otimes(\frac{x_i}{x_j})^{d_n}\cong(\frac{x_i}{x_j})^{d_1+\ldots+d_n}$.
\end{proof}

\subsection{Relatively orientable case}\label{sec:orientation-orientable case}
Let $N=-n-1+\sum_{i=1}^n d_i$, and assume $N\equiv 0\mod 2$, so that $\mc{O}_{d_1,\ldots,d_n}\to\mb{P}^n$ is relatively orientable by Proposition~\ref{prop:rel orientable}. We will give a relative orientation of $\mc{O}_{d_1,\ldots,d_n}$ and show that the local trivializations of $\mc{O}_{d_1,\ldots,d_n}$ discussed in Proposition~\ref{prop:local trivs} are compatible with this relative orientation and the Nisnevich coordinates coming from our twisted cover $\{(U_i,\tilde{\vphi}_i)\}$.

For our relative orientation, we give an isomorphism
\[\psi:\mc{O}(N/2)^{\otimes 2}\to\Hom(\det\mc{T}\mb{P}^n,\det\mc{O}_{d_1,\ldots,d_n})\]
by defining $\psi|_{U_i}$ for each $i$. Since $\mc{O}(N/2)^{\otimes 2}|_{U_i}$ is generated by $(\frac{x_i}{x_0})^{N/2}\otimes(\frac{x_i}{x_0})^{N/2}$, it suffices to define $\alpha_i:=\psi|_{U_i}((\frac{x_i}{x_0})^{N/2}\otimes(\frac{x_i}{x_0})^{N/2})$, which is a homomorphism from $\det\mc{T}\mb{P}^n|_{U_i}$ to $\det\mc{O}_{d_1,\ldots,d_n}|_{U_i}$. These are both one-dimensional as shown in Propositions~\ref{prop:nisnevich coords} and \ref{prop:local trivs}, so we may define $\alpha_i$ to be the homomorphism taking $(-1)^i\cdot\partial_i$ to $(\frac{x_i}{x_0})^{d_1+\ldots+d_n}$. To show that $\psi$ is well-defined, we need to show that on $U_i\cap U_j$, the maps $\psi|_{U_i}$ and $\psi|_{U_j}$ differ by the transition function $(\frac{x_i}{x_j})^{N/2}\otimes(\frac{x_i}{x_j})^{N/2}:\mc{O}(N/2)^{\otimes 2}|_{U_j}\to\mc{O}(N/2)^{\otimes 2}|_{U_i}$. In other words, we need to show that $\alpha_i=(\frac{x_i}{x_j})^N\alpha_j$ on $U_i\cap U_j$. To this end, let $\det g_{ij}$ and $\det h_{ij}$ be the transition functions given in Propositions~\ref{prop:nisnevich coords} and \ref{prop:local trivs} and note that
\begin{align*}
\alpha_i\circ\det g_{ij}((-1)^j\cdot\partial_j)&=\alpha_i((-1)^i(\tfrac{x_j}{x_i})^{n+1}\cdot\partial_i)\\
&=(\tfrac{x_j}{x_i})^{n+1}(\tfrac{x_i}{x_0})^{d_1+\ldots+d_n}\\
&=(\tfrac{x_j}{x_i})^{n+1}\det h_{ij}(\tfrac{x_j}{x_0})^{d_1+\ldots+d_n}\\
&=(\tfrac{x_i}{x_j})^{-n-1}(\tfrac{x_i}{x_j})^{d_1+\ldots+d_n}(\tfrac{x_j}{x_0})^{d_1+\ldots+d_n}\\
&=(\tfrac{x_i}{x_j})^{N}\alpha_j((-1)^j\cdot\partial_j).
\end{align*}

Thus $\alpha_i=(\frac{x_i}{x_j})^N\alpha_j$, as desired. In fact, we have proved the following lemma.

\begin{lem}\label{lem:compatible-orientable case}
The local trivializations $(\frac{x_i}{x_0})^{d_1}\oplus\cdots\oplus(\frac{x_i}{x_0})^{d_n}$ of $\mc{O}_{d_1,\ldots,d_n}|_{U_i}$ are compatible with the Nisnevich coordinates $\{(U_i,\tilde{\vphi}_i)\}$ and the relative orientation $(\mc{O}(N/2),\psi)$ of $\mc{O}_{d_1,\ldots,d_n}\to\mb{P}^n$.
\end{lem}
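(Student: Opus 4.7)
The plan is simply to observe that the construction of $\psi$ given immediately before the lemma was designed so that compatibility holds essentially by fiat. Concretely, Definition~\ref{defn:rel orientation} requires the element $\alpha\in\Hom(\det\mc{T}\mb{P}^n|_{U_i},\det\mc{O}_{d_1,\ldots,d_n}|_{U_i})$ which sends the distinguished basis element of $\det\mc{T}\mb{P}^n|_{U_i}$ to the distinguished basis element of $\det\mc{O}_{d_1,\ldots,d_n}|_{U_i}$ to lie in the image of the tensor-square map $H^0(U_i,\mc{O}(N/2))\to H^0(U_i,\mc{O}(N/2)^{\otimes 2})$ composed with $\psi|_{U_i}$.

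First, I would invoke Proposition~\ref{prop:nisnevich coords} to identify the distinguished basis element of $\det\mc{T}\mb{P}^n|_{U_i}$ coming from the twisted Nisnevich coordinates $\tilde{\vphi}_i$ as $(-1)^i\cdot\partial_i$, and Proposition~\ref{prop:local trivs} to identify the distinguished basis element of $\det\mc{O}_{d_1,\ldots,d_n}|_{U_i}$ coming from the local trivialization $(\frac{x_i}{x_0})^{d_1}\oplus\cdots\oplus(\frac{x_i}{x_0})^{d_n}$ as $(\frac{x_i}{x_0})^{d_1+\ldots+d_n}$. Hence the $\alpha$ in question is precisely the map $\alpha_i$ constructed above, and by the definition of $\psi|_{U_i}$ we have $\alpha_i=\psi|_{U_i}((\frac{x_i}{x_0})^{N/2}\otimes(\frac{x_i}{x_0})^{N/2})$. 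Since $(\frac{x_i}{x_0})^{N/2}$ is a well-defined element of $H^0(U_i,\mc{O}(N/2))$ (the exponent being an integer because $N$ is even by the standing hypothesis of Section~\ref{sec:orientation-orientable case}), $\alpha_i$ is the image under $\psi|_{U_i}$ of a pure tensor square, and therefore $\alpha$ is a square in the sense of Definition~\ref{defn:rel orientation}.

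The only substantive step is the verification that the locally defined $\alpha_i$ assemble into a well-defined isomorphism of line bundles $\psi$, i.e.\ that $\alpha_i=(\frac{x_i}{x_j})^N\alpha_j$ on $U_i\cap U_j$. This is precisely the calculation in the displayed computation just before the lemma, which I expect to be the main (though already completed) obstacle: one must arrange for the sign $(-1)^{i+j}$ appearing in $\det g_{ij}$ to disappear so that the mismatch between the transition functions of $\det\mc{T}\mb{P}^n$ and $\det\mc{O}_{d_1,\ldots,d_n}$ reduces cleanly to the transition function $(\frac{x_i}{x_j})^N$ of $\mc{O}(N/2)^{\otimes 2}$. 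This is exactly the reason for introducing the sign twist in $\tilde{\vphi}_i$: the signs get absorbed into the distinguished basis elements $(-1)^i\partial_i$ and $(-1)^j\partial_j$, leaving only the desired power of $\frac{x_i}{x_j}$. Once this check is in hand, the lemma follows by directly reading off the definitions.
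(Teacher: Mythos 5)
Your proposal is correct and matches the paper's own argument: the paper likewise observes that $\alpha_i$ is precisely the map sending the distinguished basis element $(-1)^i\cdot\partial_i$ to $(\tfrac{x_i}{x_0})^{d_1+\ldots+d_n}$, and that $\psi$ was constructed so that $\alpha_i=\psi|_{U_i}((\tfrac{x_i}{x_0})^{N/2}\otimes(\tfrac{x_i}{x_0})^{N/2})$ is a tensor square, with the well-definedness of $\psi$ being exactly the displayed transition-function computation preceding the lemma.
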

\begin{proof}
By construction, $\alpha_i$ is the element of $\Hom(\det\mc{T}\mb{P}^n|_{U_i},\det\mc{O}_{d_1,\ldots,d_n}|_{U_i})$ taking the distinguished basis element $(-1)^i\cdot\partial_i$ of $\det\mc{T}\mb{P}^n|_{U_i}$ to the distinguished basis element $(\frac{x_i}{x_0})^{d_1+\ldots+d_n}$ of $\det\mc{O}_{d_1,\ldots,d_n}|_{U_i}$. The relative orientation $(\mc{O}(N/2),\psi)$ was built such that $\psi|_{U_i}((\frac{x_i}{x_0})^{N/2}\otimes(\frac{x_i}{x_0})^{N/2})=\alpha_i$, so $\alpha_i$ is a tensor square in $\mc{O}(N/2)^{\otimes 2}|_{U_i}$.
\end{proof}

\subsection{Non-relatively orientable case}\label{sec:orientation-non-orientable case}
Let $N=-n-1+\sum_{i=1}^n d_i$, and assume $N\not\equiv 0\mod 2$. In this case, $\mc{O}_{d_1,\ldots,d_n}\to\mb{P}^n$ is not relatively orientable, since there is no line bundle of the form $\mc{O}(N/2)$ when $N/2$ is not an integer. However, we will show that $\mc{O}_{d_1,\ldots,d_n}\to\mb{P}^n$ is relatively orientable relative to the effective Cartier divisor $D=\{x_0=0\}$ of $\mb{P}^n$. Figuratively, this divisor gives us a geometric horizon relative to which we can orient our hypersurfaces in projective space.

We have chosen the divisor $D=\{x_0=0\}$ so that the local trivializations of $\mc{O}(D)$ work nicely with our other twisting sheaves. In particular, we have $\Hom(\det\mc{T}\mb{P}^n,\det\mc{O}_{d_1,\ldots,d_n})\otimes\mc{O}(D)\cong\mc{O}(N+1)$. Since $N+1\equiv 0\mod 2$, the bundle $\Hom(\det\mc{T}\mb{P}^n,\det\mc{O}_{d_1,\ldots,d_n})\otimes\mc{O}(D)$ is the tensor square of the line bundle $\mc{O}(\frac{N+1}{2})$. We may thus apply the work of Section~\ref{sec:orientation-orientable case} to get a relative orientation $(\mc{O}(\frac{N+1}{2}),\tilde{\psi})$ of $\mc{O}_{d_1,\ldots,d_n}$ relative to the divisor $D$, as well as local trivializations of $\mc{O}_{d_1,\ldots,d_n}$ compatible with our Nisnevich coordinates $\{(U_i,\tilde{\vphi}_i)\}$ and our relative orientation $(\mc{O}(\frac{N+1}{2}),\tilde{\psi})$.

\begin{lem}\label{lem:compatible-non-orientable}
The local trivialization $(\frac{x_i}{x_0})^{d_1}\oplus\cdots\oplus(\frac{x_i}{x_0})^{d_n}$ of $\mc{O}_{d_1,\ldots,d_n}|_{U_i}$ are compatible with the Nisnevich coordinates $\{(U_i,\tilde{\vphi}_i)\}$ and the relative orientation $(\mc{O}(\frac{N+1}{2}),\tilde{\psi})$, where $\tilde{\psi}$ is given locally by $\tilde{\psi}|_{U_i}((\frac{x_i}{x_0})^{(N+1)/2}\otimes(\frac{x_i}{x_0})^{(N+1)/2})=\alpha_i\otimes\frac{x_i}{x_0}$.
\end{lem}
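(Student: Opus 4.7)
The plan is to parallel the argument for Lemma~\ref{lem:compatible-orientable case}, adapted so that the extra factor of $\mc{O}(D)$ supplies the parity fix needed when $N$ is odd.

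I would begin by verifying that the local formula $\tilde{\psi}|_{U_i}((\tfrac{x_i}{x_0})^{(N+1)/2} \otimes (\tfrac{x_i}{x_0})^{(N+1)/2}) = \alpha_i \otimes \tfrac{x_i}{x_0}$ assembles into a well-defined global isomorphism $\tilde{\psi} : \mc{O}(\tfrac{N+1}{2})^{\otimes 2} \to \Hom(\det\mc{T}\mb{P}^n, \det\mc{O}_{d_1,\ldots,d_n}) \otimes \mc{O}(D)$. This reduces to a cocycle computation on each overlap $U_i \cap U_j$. Three ingredients feed in: the source generator transforms by $(\tfrac{x_i}{x_j})^{N+1}$, i.e.\ $(\tfrac{x_i}{x_0})^{(N+1)/2}\otimes(\tfrac{x_i}{x_0})^{(N+1)/2} = (\tfrac{x_i}{x_j})^{N+1}\,(\tfrac{x_j}{x_0})^{(N+1)/2}\otimes(\tfrac{x_j}{x_0})^{(N+1)/2}$; the identity $\alpha_i = (\tfrac{x_i}{x_j})^N\alpha_j$, derived in Section~\ref{sec:orientation-orientable case} from Propositions~\ref{prop:nisnevich coords} and~\ref{prop:local trivs} and valid regardless of the parity of $N$; and the $\mc{O}(D)$-generator transition $\tfrac{x_i}{x_0} = \tfrac{x_i}{x_j}\cdot\tfrac{x_j}{x_0}$. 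Multiplying the latter two produces exactly the $(\tfrac{x_i}{x_j})^{N+1}$ needed to match the source transition, so the local formulas patch.

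I would then verify compatibility. By Propositions~\ref{prop:nisnevich coords} and~\ref{prop:local trivs}, $\alpha_i$ is the element of $\Hom(\det\mc{T}\mb{P}^n|_{U_i}, \det\mc{O}_{d_1,\ldots,d_n}|_{U_i})$ sending $(-1)^i\partial_i$ to $(\tfrac{x_i}{x_0})^{d_1+\ldots+d_n}$. The defining formula for $\tilde{\psi}$ was arranged precisely so that $\alpha_i \otimes 1_D$, expressed in the trivialization of $\mc{O}(D)|_{U_i}$ by the generator $\tfrac{x_i}{x_0}$, equals the image under $\tilde{\psi}$ of the tensor square $(\tfrac{x_i}{x_0})^{(N+1)/2} \otimes (\tfrac{x_i}{x_0})^{(N+1)/2}$. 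Thus $\alpha_i \otimes 1_D$ is a tensor square of a section of $\mc{O}(\tfrac{N+1}{2})|_{U_i}$, which is exactly the compatibility criterion.

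The main step is the cocycle check: the powers of $\tfrac{x_i}{x_j}$ need to line up, and it is here that the $\mc{O}(D)$ twist earns its keep, contributing precisely the extra factor of $\tfrac{x_i}{x_j}$ that the orientable-case argument cannot produce when $N$ is odd. Once the transition matches, the compatibility statement is essentially a tautology from the local formula, exactly as in the orientable case.
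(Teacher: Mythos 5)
Your proposal is correct and follows essentially the same route as the paper: check the cocycle condition on overlaps using the identity $\alpha_i=(\tfrac{x_i}{x_j})^N\alpha_j$ together with the transition $\tfrac{x_i}{x_0}=\tfrac{x_i}{x_j}\cdot\tfrac{x_j}{x_0}$ for the local expression of $1_D$, then observe that compatibility is automatic because $\alpha_i\otimes 1_D$ is a tensor square by the defining formula for $\tilde{\psi}$. Your remark that the identity $\alpha_i=(\tfrac{x_i}{x_j})^N\alpha_j$ holds irrespective of the parity of $N$ is a worthwhile clarification of the citation the paper makes to the orientable case.
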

\begin{proof}
The canonical section $1_D$ of $\mc{O}(D)$ is locally given by $\frac{x_i}{x_0}$. By construction, $\alpha_i((-1)^i\cdot\partial_i)\otimes\frac{x_i}{x_0}=(\frac{x_i}{x_0})^{d_1+\ldots+d_n+1}$, so we have $\alpha_i\otimes\frac{x_i}{x_0}=(\frac{x_i}{x_j})^{N+1}\alpha_j\otimes\frac{x_j}{x_0}$ on $U_i\cap U_j$ by Lemma~\ref{lem:compatible-orientable case}. Thus the maps $\tilde{\psi}|_{U_i}$ and $\tilde{\psi}|_{U_j}$ differ by the transition function $(\frac{x_i}{x_j})^{(N+1)/2}\otimes(\frac{x_i}{x_j})^{(N+1)/2}:\mc{O}(\frac{N+1}{2})^{\otimes 2}|_{U_j}\to\mc{O}(\frac{N+1}{2})^{\otimes 2}|_{U_i}$, so the relative orientation $(\mc{O}(\frac{N+1}{2}),\tilde{\psi})$ relative to the divisor $D$ is well-defined. This relative orientation was constructed such that $\alpha_i\otimes\frac{x_i}{x_0}$ is a square.
\end{proof}

\section{Euler class}\label{sec:euler class}
In Section~\ref{sec:rel orientations}, we gave a relative orientation (possibly relative to an effective Cartier divisor) of $\mc{O}_{d_1,\ldots,d_n}\to\mb{P}^n$, as well as local trivializations of this bundle compatible with the given relative orientation and our twisted Nisnevich coordinates. This data allows us to compute the local degree of sections of $\mc{O}_{d_1,\ldots,d_n}$. We can also define an Euler class of this vector bundle, paired with a given section, by computing the sum of local degrees of the section. When this sum does not depend on our choice of section, the Euler class gives us an invariant associated to the vector bundle at hand. This invariant will correspond to the enumerative {\it fixed value} discussed at the beginning of Section~\ref{sec:rel orientations}.

We first discuss how to compute the local degree of a section. Let $V$ be a vector bundle on a scheme $X$ of dimension $n$, and suppose that we have Nisnevich coordinates, a relative orientation (possibly relative to an effective Cartier divisor) of $V$, and compatible local trivializations of $V$. If $\sigma$ is a section of $V$ with isolated zero $p\in X$, then take an open affine $U\subseteq X$ containing $p$. Under the compatible local trivialization of $V|_U$, the section $\sigma|_U$ becomes an $n$-tuple of functions $(f_1,\ldots,f_n):U\to\mb{A}^n_k$. If $\vphi:U\to\mb{A}^n_k$ are the aforementioned Nisnevich coordinates, and if $\vphi|_U$ is an isomorphism, then $(f_1,\ldots,f_n)\circ\vphi^{-1}$ is an endomorphism of $\mb{A}^n_k$. (In general, $\vphi|_U$ may not be an isomorphism, in which case we cannot write the section $\sigma$ as a polynomial map. We can, however, write our section as a polynomial map with a negligible error term. See~\cite[Lemmas 24--28]{KW17} for details.)

We may thus compute the local degree of this endomorphism as outlined in~\cite[Table 1]{KW19}. Kass and Wickelgren~\cite[Corollary 29]{KW17} also show that given a relative orientation of $V$, the Euler class $e$ does not depend on the choice of Nisnevich coordinates on $X$ with compatible trivialization of $V$. This allows us to define $\deg_p\sigma=\deg_{\vphi(p)}(f_1,\ldots,f_n)\circ\vphi^{-1}$. Note that we must choose our neighborhood $U$ sufficiently small, so that $\vphi^{-1}(\vphi(p))=\{p\}$.

\begin{defn}\cite[Definition 33]{KW17}
Given a relatively oriented (relative to an effective Cartier divisor $D$) vector bundle $V\to X$ and a section $\sigma$ with isolated zero locus (such that $\sigma$ does not vanish on $D$), define the {\it Euler number} of $(V,\sigma)$ to be
\[e(V,\sigma)=\sum_{p\in\sigma^{-1}(0)}\deg_p\sigma.\]
\end{defn}

When $e(V,\sigma)$ does not depend on our choice of section $\sigma$, we will simply denote this by $e(V)$. It will turn out that $e(\mc{O}_{d_1,\ldots,d_n})$ does not depend on our choice of section when $\mc{O}_{d_1,\ldots,d_n}\to\mb{P}^n$ is relatively orientable. Let $N=-n-1+\sum_{i=1}^n d_i$, and assume $N\equiv 0\mod 2$ so that $\mc{O}_{d_1,\ldots,d_n}\to\mb{P}^n$ is relatively orientable. We will show that $e(\mc{O}_{d_1,\ldots,d_n},\sigma)$ does not depend on our choice of section. First, we need the following proposition.

\begin{prop}\label{prop:minimal prime}
Let $\Char{k}=0$. Let $n\geq 1$. If $(f_1,\ldots,f_{i-1})$ is a regular sequence in $k[x_0,\ldots,x_n]$ for $1\leq i\leq n$, then
\[Z_i:=\{f_i\in H^0(\mb{P}^n_k,\mc{O}(d_i)):(f_1,\ldots,f_{i-1},f_i)\text{ is not a regular sequence}\}\]
has $k$-codimension at least 2 in $H^0(\mb{P}^n_k,\mc{O}(d_i))$.
\end{prop}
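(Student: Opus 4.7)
The plan is to reinterpret $Z_i$ in terms of associated primes of the ideal $(f_1,\ldots,f_{i-1})$ and then invoke the Cohen--Macaulay structure of $R := k[x_0,\ldots,x_n]$ together with a graded Hilbert-function estimate. First I would observe that $(f_1,\ldots,f_{i-1},f_i)$ fails to be a regular sequence exactly when $f_i$ lies in some associated prime of $I := (f_1,\ldots,f_{i-1})$. Because $R$ is Cohen--Macaulay and $I$ is generated by a regular sequence, the ideal $I$ is unmixed: all associated primes have height exactly $i-1$, so they coincide with the (necessarily homogeneous) minimal primes $P_1,\ldots,P_m$ over $I$. Each $P_j$ has height $i - 1 \leq n - 1$, so it does not contain the irrelevant ideal $(x_0,\ldots,x_n)$, and we may write
\[ Z_i = \bigcup_{j=1}^m (P_j)_{d_i}, \]
a finite union of $k$-linear subspaces of $H^0(\mb{P}^n_k,\mc{O}(d_i)) = R_{d_i}$. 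Since the codimension of a finite union of linear subspaces equals the minimum of the codimensions of its pieces, it suffices to show each $(P_j)_{d_i}$ has codimension at least $2$ in $R_{d_i}$.

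Setting $A := R/P_j$, this quotient is a graded $k$-algebra generated in degree $1$ over $A_0 = k$, and a domain of Krull dimension $n+2-i \geq 2$. The key claim is $\dim_k A_{d_i} \geq 2$. I would argue first that $\dim_k A_1 \geq 2$: otherwise $A$ is a graded quotient of the one-variable polynomial ring $k[t]$, which forces $\dim A \leq 1$, a contradiction. Then, picking linearly independent $\bar{\ell}_1, \bar{\ell}_2 \in A_1$, the elements $\bar{\ell}_1^{d_i}$ and $\bar{\ell}_1^{d_i-1}\bar{\ell}_2$ of $A_{d_i}$ are themselves linearly independent: any relation $\lambda \bar{\ell}_1^{d_i} + \mu \bar{\ell}_1^{d_i-1} \bar{\ell}_2 = 0$ yields $\bar{\ell}_1^{d_i-1}(\lambda \bar{\ell}_1 + \mu \bar{\ell}_2) = 0$, and the domain hypothesis together with $\bar{\ell}_1 \neq 0$ then forces $\lambda \bar{\ell}_1 + \mu \bar{\ell}_2 = 0$, contradicting the independence of $\bar{\ell}_1, \bar{\ell}_2$.

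The main obstacle I anticipate is the first step: correctly converting the regular-sequence failure into the associated-prime picture and verifying unmixedness cleanly in the graded setting. Once $Z_i$ is exhibited as the finite union of the subspaces $(P_j)_{d_i}$, the bound follows from the graded-domain calculation above, so the heart of the argument is purely commutative-algebraic and is insensitive to the hypothesis $\operatorname{char} k = 0$ (which appears to be included for uniformity with later sections rather than for this proof specifically).
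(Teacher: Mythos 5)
Your argument is correct, and its second half genuinely differs from the paper's. Both proofs begin the same way: failure of $(f_1,\ldots,f_{i-1},f_i)$ to be regular means $f_i$ is a zerodivisor on $k[x_0,\ldots,x_n]/(f_1,\ldots,f_{i-1})$, hence lies in one of finitely many (homogeneous) primes, and it suffices to show each such prime $\mf{p}$ satisfies $\codim_k\mf{p}_{d_i}\geq 2$ — indeed you are a bit more careful here than the paper, invoking unmixedness of an ideal generated by a regular sequence in a Cohen--Macaulay ring to identify the associated primes with the minimal primes, whereas the paper passes directly to minimal primes. The key estimate is then handled differently: the paper argues by contradiction, using Krull's height theorem (so $\mf{p}$ contains at most $i-1$ independent linear forms) together with the Veronese embedding — if $\mf{p}_{d_i}$ had codimension $\leq 1$, a hyperplane section of the Veronese variety would supply $n$ or $n+1$ linear forms whose $d_i$-th powers, hence (by primeness) the forms themselves, lie in $\mf{p}$. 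You instead pass to the graded domain $A=k[x_0,\ldots,x_n]/\mf{p}$, note $\dim A=n+2-i\geq 2$ forces $\dim_k A_1\geq 2$, and exhibit the two independent elements $\bar{\ell}_1^{d_i},\bar{\ell}_1^{d_i-1}\bar{\ell}_2\in A_{d_i}$. Your route is shorter, purely commutative-algebraic, and works over an arbitrary field: it avoids the choice of ``general points'' on a hyperplane section, which implicitly uses an infinite (here characteristic-zero) base field, so your closing remark that $\Char k=0$ is not needed for this proposition is justified for your argument. What the paper's approach buys in exchange is an explicitly geometric picture (powers of linear forms, the Veronese variety, and hyperplane sections) consistent with the style of the surrounding sections, but as a proof of the codimension bound your version is the more economical and more general one.
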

\begin{proof}
By definition of regular sequences, $f_i$ is not a zero divisor in $\frac{k[x_0,\ldots,x_n]}{(f_1,\ldots,f_{i-1})}$ if and only if $(f_1,\ldots,f_{i-1},f_i)$ is a regular sequence in $k[x_0,\ldots,x_n]$. The set of zero divisors in $\frac{k[x_0,\ldots,x_n]}{(f_1,\ldots,f_{i-1})}$ is given by the union of the minimal prime ideals associated to the ideal $I:=(f_1,\ldots,f_{i-1})$. Moreover, since $k[x_0,\ldots,x_n]$ is Noetherian, there are finitely many minimal prime ideals associated to $I$. Given a minimal prime $\mf{p}$ associated to $I$, let $\mf{p}_{d_i}$ denote the degree $d_i$ part of $\mf{p}$, considered as a $k$-vector space. If
\[\codim_k\mf{p}_{d_i}:=\dim_k H^0(\mb{P}^n_k,\mc{O}(d_i))-\dim_k\mf{p}_{d_i}\geq 2\]
for any minimal prime $\mf{p}$ associated to $I$, then $Z_i$ is a finite union of spaces of codimension at least 2. It will then follow that $Z_i$ has codimension at least 2 in $H^0(\mb{P}^n_k,\mc{O}(d_i))$.

Let $\mf{p}$ be a minimal prime ideal associated to $I$. Krull's height theorem implies that $\mf{p}$ has height at most $i-1$, so $\mf{p}$ contains at most $i-1$ linear forms that are linearly independent over $k$. Suppose that $\codim_k\mf{p}_{d_i}<2$. Then $\dim_k\mf{p}_{d_i}\geq\binom{n+d_i}{d_i}-1$. If $\dim_k\mf{p}_{d_i}=\binom{n+d_i}{d_i}$, then $\mf{p}_{d_i}=H^0(\mb{P}^n_k,\mc{O}(d_i))$ and hence $x_0^{d_i},\ldots,x_n^{d_i}\in\mf{p}_{d_i}$. Since $\mf{p}$ is a prime ideal, it follows that $x_0,\ldots,x_n\in\mf{p}$, so $\mf{p}$ contains $n+1>i-1$ linear forms that are linearly independent. 

We may thus assume that $\dim_k\mf{p}_{d_i}=\binom{n+d_i}{d_i}-1$. Let $N=\binom{n+d_i}{d_i}$, and consider the Veronese embedding $v_{d_i}:\mb{P}^n_k\to\mb{P}^{N-1}_k$, where $\mb{P}^n_k\cong\mb{P}(H^0(\mb{P}^n_k,\mc{O}(1)))$ and $\mb{P}^{N-1}_k\cong\mb{P}(H^0(\mb{P}^n_k,\mc{O}(d_i)))$. Under the Veronese embedding, the image of $\ell\in\mb{P}(H^0(\mb{P}^n_k,\mc{O}(1)))$ is $v_{d_i}(\ell)=\ell^{d_i}\in\mb{P}(H^0(\mb{P}^n_k,\mc{O}(d_i)))$. Since $\mf{p}_{d_i}$ is a codimension 1 subspace of $H^0(\mb{P}^n_k,\mc{O}(d_i))$ by assumption, we have an isomorphism $\mb{P}(\mf{p}_{d_i})\cong H$ for some hyperplane $H\subset\mb{P}^{N-1}_k$. The image $v_{d_i}(\mb{P}^n_k)$ of the Veronese embedding is not contained in any hyperplane, so the hyperplane section $v_{d_i}(\mb{P}^n_k)\cap H$ has dimension $\dim{v_{d_i}(\mb{P}^n_k)}-1$. Since the Veronese embedding is an isomorphism onto its image, it follows that $v_{d_i}(\mb{P}^n_k)\cap H=v_{d_i}(X)$ for some $X\subset\mb{P}^n_k$ of dimension $n-1$. This allows us to pick general points $p_1,\ldots,p_n\in X$ such that $\{p_1,\ldots,p_n\}$ is not contained in any $(n-2)$-plane. Thus if $\ell_j\in H^0(\mb{P}^n_k,\mc{O}(1))$ is any lift of $p_j$ under 
\[H^0(\mb{P}^n_k,\mc{O}(1))\to\mb{P}(H^0(\mb{P}^n_k,\mc{O}(1)))\cong\mb{P}^n_k,\]
then $\ell_1,\ldots,\ell_n$ are linearly independent over $k$. Moreover, since $v_{d_i}(p_j)\in H$, we have that $\ell_j^{d_i}\in\mf{p}_{d_i}$. Since $\mf{p}$ is a prime ideal, it follows that $\ell_1,\ldots,\ell_n\in\mf{p}$, so $\mf{p}$ contains $n>i-1$ linear forms that are linearly independent. By contradiction, we conclude that $\codim_k\mf{p}_{d_i}\geq 2$.
\end{proof}

We can now prove that $e(\mc{O}_{d_1,\ldots,d_n},\sigma)$ does not depend on our choice of section.

\begin{lem}\label{lem:euler class-independent}
The Euler number $e(\mc{O}_{d_1,\ldots,d_n},\sigma)$ is independent of the choice of section $\sigma$.
\end{lem}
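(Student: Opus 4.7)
The plan is to show that any two sections with isolated zero loci can be connected by a chain of affine-linear $\mb{A}^1$-families of sections that also have isolated zero loci, then invoke $\mb{A}^1$-homotopy invariance of the sum of local degrees from~\cite[Corollary 29]{KW17}. Proposition~\ref{prop:minimal prime} supplies the key codimension-$\geq 2$ bound making this possible.

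First, I identify $H^0(\mb{P}^n,\mc{O}_{d_1,\ldots,d_n})$ with the affine space $\mb{A}^M_k$, where $M=\sum_{i=1}^n\binom{n+d_i}{d_i}$, and let $Z\subset\mb{A}^M_k$ be the closed subvariety of tuples $(f_1,\ldots,f_n)$ that fail to form a regular sequence in $k[x_0,\ldots,x_n]$. Since the polynomial ring is Cohen--Macaulay, a section $\sigma=(f_1,\ldots,f_n)\notin Z$ has zero-dimensional zero locus, hence isolated zeros. I then stratify: writing $Z^{(i)}\subset\bigoplus_{j=1}^i H^0(\mb{P}^n,\mc{O}(d_j))$ for tuples $(f_1,\ldots,f_i)$ that fail to be regular, I argue inductively that $\codim_k Z^{(i)}\geq 2$. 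The inductive step decomposes $Z^{(i)}$ into the preimage of $Z^{(i-1)}$ under forgetting $f_i$ (codimension $\geq 2$ by induction) together with a family over the regular-sequence locus whose fiber over $(f_1,\ldots,f_{i-1})$ is the set $Z_i$ of Proposition~\ref{prop:minimal prime} (codimension $\geq 2$ in $H^0(\mb{P}^n,\mc{O}(d_i))$). Taking $i=n$ gives $\codim_k Z\geq 2$.

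Second, given $\sigma_0,\sigma_1\in\mb{A}^M_k\setminus Z$, the cone over $Z$ with vertex $\sigma_0$ has dimension at most $\dim Z+1$, hence codimension $\geq 1$ in $\mb{A}^M_k$; similarly for the cone from $\sigma_1$. A generic auxiliary $\sigma^*\in\mb{A}^M_k$ therefore avoids both cones, so the affine lines $L_0:\mb{A}^1\to\mb{A}^M_k$ from $\sigma_0$ to $\sigma^*$ and $L_1$ from $\sigma^*$ to $\sigma_1$ lie entirely in $\mb{A}^M_k\setminus Z$. Each $L_i$ is an $\mb{A}^1$-family of sections with isolated zero loci; along such a family the sum of local degrees is constant by~\cite[Corollary 29]{KW17}. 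Chaining the two homotopies yields $e(\mc{O}_{d_1,\ldots,d_n},\sigma_0)=e(\mc{O}_{d_1,\ldots,d_n},\sigma^*)=e(\mc{O}_{d_1,\ldots,d_n},\sigma_1)$.

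The main obstacle is the avoidance step—producing honest $\mb{A}^1$-families (not merely a topological path) of good sections joining $\sigma_0$ and $\sigma_1$—and this is exactly where the codimension-$\geq 2$ bound is indispensable: a codimension-$1$ bad locus $Z$ would yield cones of codimension zero through $\sigma_0$ and $\sigma_1$, blocking the generic-line construction and hence the $\mb{A}^1$-homotopy argument. The inductive codimension bound from Proposition~\ref{prop:minimal prime}, whose heart lies in the Veronese-plus-Krull-height argument of its proof, is therefore doing all the work.
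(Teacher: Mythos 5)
Your argument is essentially the paper's own direct proof: the same inductive codimension-$\geq 2$ bound on the locus of non-regular sequences, driven by Proposition~\ref{prop:minimal prime}, followed by connecting two good sections through affine $\mb{A}^1$-families of sections with isolated zeros. The only real difference is that you unpack the connecting step by hand (the cone-and-generic-line construction), whereas the paper delegates exactly this to the notion of being \emph{connected by sections}~\cite[Definition 37]{KW17} and to~\cite[Corollary 38]{KW17}; your explicit version is fine, but note that your citation of~\cite[Corollary 29]{KW17} is to the wrong statement --- that corollary concerns independence of the choice of Nisnevich coordinates and compatible trivializations, not homotopy invariance along a family, so you should invoke Corollary 38 (or the surrounding lemmas) instead. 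Two caveats on scope: Proposition~\ref{prop:minimal prime} is stated only in characteristic $0$, and your generic choice of the auxiliary section $\sigma^*$ also needs $k$ infinite, so as written your proof establishes the lemma only over fields of characteristic zero, not over an arbitrary perfect field (in particular not over the finite fields used later in the paper); the paper covers the general case with a one-line appeal to~\cite[Theorem 1.1]{BW20}, and you would need to do the same or restrict the statement. With that citation fixed and the characteristic hypothesis acknowledged, the proof is correct and matches the paper's route.
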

\begin{proof}
This follows from~\cite[Theorem 1.1]{BW20}. However, we will also give a more direct proof of this lemma assuming $\Char{k}=0$. Given a section $\sigma\in H^0(\mb{P}^n_k,\mc{O}_{d_1,\ldots,d_n})$, let $Z(\sigma)=\{p\in\mb{P}^n:\sigma(p)=0\}$ be its zero locus. We will show that $\{\sigma:Z(\sigma)\text{ is not isolated}\}$ has $k$-codimension at least 2 in $H^0(\mb{P}^n_k,\mc{O}_{d_1,\ldots,d_n})$. This will show that
\[H^0(\mb{P}^n_k,\mc{O}_{d_1,\ldots,d_n})\backslash\{\sigma:Z(\sigma)\text{ is not isolated}\}\]
is {\it connected by sections} in the sense of~\cite[Definition 37]{KW17}. As a result, \cite[Corollary 38]{KW17} will imply that $e(\mc{O}_{d_1,\ldots,d_n},\sigma)$ is independent of $\sigma$.

The zero locus $Z(\sigma)$ is isolated if and only if $(f_1,\ldots,f_n)$ is a regular sequence, so $Z(\sigma)$ is not isolated if and only if $f_i$ is a zero divisor in $\frac{k[x_0,\ldots,x_n]}{(f_1,\ldots,f_{i-1})}$ for some $i$. Note that $H^0(\mb{P}^n_k,\mc{O}_{d_1,\ldots,d_n})=\bigoplus_{i=1}^n H^0(\mb{P}^n_k,\mc{O}(d_i))$. Let $Z_i\subseteq H^0(\mb{P}^n_k,\mc{O}_{d_1,\ldots,d_i})$ be the set of all sections $(f_1,\ldots,f_i)$ such that $(f_1,\ldots,f_{i-1})$ is a regular sequence and $f_i$ is a zero divisor in $\frac{k[x_0,\ldots,x_n]}{(f_1,\ldots,f_{i-1})}$. Then the set of all non-regular sequences $(f_1,\ldots,f_n)$ is given by
\[\bigcup_{i=1}^n (Z_i\oplus H^0(\mb{P}^n_k,\mc{O}_{d_{i+1},\ldots,d_n})).\]
Proposition~\ref{prop:minimal prime} implies that $Z_i$ has codimension at least 2 in $H^0(\mb{P}^n_k,\mc{O}_{d_1,\ldots,d_i})$. It follows that $Z_i\oplus H^0(\mb{P}^n_k,\mc{O}(d_{i+1}))$ has codimension at least 2 in $H^0(\mb{P}^n_k,\mc{O}_{d_1,\ldots,d_{i+1}})$. Iterating this process, we have that $Z_i\oplus H^0(\mb{P}^n_k,\mc{O}_{d_{i+1},\ldots,d_n})$ has codimension at least 2 in $H^0(\mb{P}^n_k,\mc{O}_{d_1,\ldots,d_n})$ for $i=1,\ldots,n$. The set of all non-regular sequences is thus a finite union of sets of codimension at least 2, so $\{\sigma:Z(\sigma)\text{ is not isolated}\}$ has codimension at least 2.
\end{proof}

We may now compute $e(\mc{O}_{d_1,\ldots,d_n})$.

\begin{thm}\label{thm:euler class-orientable}
We have $e(\mc{O}_{d_1,\ldots,d_n})=\frac{d_1\cdots d_n}{2}\cdot\mb{H}$.
\end{thm}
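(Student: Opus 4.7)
The plan is to exploit Lemma~\ref{lem:euler class-independent} by choosing a section with a single, easily analyzed zero and computing the local $\mb{A}^1$-degree there. I would take $\sigma=(x_1^{d_1},\ldots,x_n^{d_n})$, whose zero locus in $\mb{P}^n$ is the single point $p=\pcoor{1:0:\cdots:0}\in U_0$. Since $\tilde{\vphi}_0=\vphi_0$ provides Nisnevich coordinates compatible with our relative orientation and local trivializations (Proposition~\ref{prop:nisnevich coords} and Lemma~\ref{lem:compatible-orientable case}), the section is identified near $p$ with the polynomial map
\[F:\mb{A}^n_k\to\mb{A}^n_k,\qquad F(y_1,\ldots,y_n)=(y_1^{d_1},\ldots,y_n^{d_n}),\qquad y_i=\tfrac{x_i}{x_0},\]
sending $p$ to the origin, so $e(\mc{O}_{d_1,\ldots,d_n})=\deg_0 F$.

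Next, I would compute $\deg_0 F$ via the Eisenbud--Levine--Khimshiashvili form, which Kass--Wickelgren~\cite{KW19} show computes the local $\mb{A}^1$-degree. Because the local algebra $Q=k[y_1,\ldots,y_n]/(y_1^{d_1},\ldots,y_n^{d_n})$ splits as the tensor product $\bigotimes_i k[y_i]/(y_i^{d_i})$, the EKL form is the tensor product of the one-variable EKL forms, so
\[\deg_0 F=\prod_{i=1}^n \deg_0(y_i\mapsto y_i^{d_i})\in\GW(k).\]
A short computation, in the basis $\{1,y,\ldots,y^{d-1}\}$ of $k[y]/(y^d)$ with the linear functional dual to $y^{d-1}$, gives an anti-diagonal Gram matrix and hence
\[\deg_0(y^d)=\begin{cases}\tfrac{d}{2}\mb{H} & d\text{ even},\\ \tfrac{d-1}{2}\mb{H}+\langle 1\rangle & d\text{ odd},\end{cases}\]
which in particular has rank $d$.

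To conclude, I would invoke Remark~\ref{rem:rel orient- not all odd}: since $\sum_i d_i\equiv n+1\bmod 2$, at least one $d_i$ is even; reorder so $d_n$ is even, giving $\deg_0(y_n^{d_n})=\tfrac{d_n}{2}\mb{H}$. The relations of Section~\ref{sec:gw} yield $\langle a\rangle\mb{H}=\mb{H}$ for all $a\in k^\times$, hence $\alpha\cdot\mb{H}=(\rank\alpha)\mb{H}$ for any $\alpha\in\GW(k)$. Since $\prod_{i<n}\deg_0(y_i^{d_i})$ has rank $d_1\cdots d_{n-1}$, we conclude
\[\deg_0 F=\Bigl(\prod_{i=1}^{n-1}\deg_0(y_i^{d_i})\Bigr)\cdot\tfrac{d_n}{2}\mb{H}=\tfrac{d_1\cdots d_n}{2}\mb{H}.\]

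The main delicacy will be checking that the tensor-product description of the EKL form applies verbatim in this multivariate power-map setting, and that the twist $\tilde{\vphi}_0=\vphi_0$ contributes no extraneous sign on $U_0$; both should be immediate from the constructions in Section~\ref{sec:rel orientations}, after which the remainder is a formal manipulation in the presentation of $\GW(k)$ and a use of the parity constraint on $\sum_i d_i$ to produce the hyperbolic factor that collapses everything.
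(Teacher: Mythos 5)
Your proposal is correct and follows essentially the same route as the paper: choose the section $(x_1^{d_1},\ldots,x_n^{d_n})$ with its single zero at $\pcoor{1:0:\cdots:0}$, invoke the independence of the Euler number from the section, factor the local degree as $\prod_i\deg_0(y_i^{d_i})$, use the one-variable formula for $\deg_0(y^d)$, and use the parity constraint (at least one $d_i$ even) together with $\langle a\rangle\cdot\mb{H}=\mb{H}$ to collapse the product to $\frac{d_1\cdots d_n}{2}\cdot\mb{H}$. Your explicit tensor-product justification of the multiplicativity of the EKL form is a welcome elaboration of a step the paper states without detail, but the argument is the same.
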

\begin{proof}
By Lemma~\ref{lem:euler class-independent}, $e(\mc{O}_{d_1,\ldots,d_n})$ is independent of choice of section. Let $\sigma=(x_1^{d_1},\ldots,x_n^{d_n})$. The zero locus of $\sigma$ consists only of the point $p=\pcoor{1:0:\cdots:0}$, so $e(\mc{O}_{d_1,\ldots,d_n})=\deg_p\sigma$. Since $p\in U_0$, our twisted cover and local trivialization of $\mc{O}_{d_1,\ldots,d_n}$ on $U_0$ tell us that $\deg_p\sigma=\deg_{(0,\ldots,0)}((\frac{x_1}{x_0})^{d_1},\ldots,(\frac{x_n}{x_0})^{d_n})$. We may rewrite this local degree as a product of local degrees, yielding $\deg_p\sigma=\prod_{i=1}^n\deg_0 x^{d_i}$. By Remark~\ref{rem:rel orient- not all odd}, at least one of $d_1,\ldots,d_n$ is even. Since
\[\deg_0 ax^{d}=\begin{cases}\frac{d-1}{2}\cdot\mb{H}+\langle a\rangle & d\text{ odd},\\ \frac{d}{2}\cdot\mb{H} & d\text{ even},\end{cases}\]
we have that
\begin{align*}
\deg_p\sigma&=\prod_{d_i\text{ even}}\left(\tfrac{d_i}{2}\cdot\mb{H}\right)\cdot\prod_{d_i\text{ odd}}\left(\tfrac{d_i-1}{2}\cdot\mb{H}+\langle 1\rangle\right)\\
&=\left(\frac{\prod_\text{even} d_i}{2}\cdot\mb{H}\right)\left(\frac{(\prod_\text{odd}d_i)-1}{2}\cdot\mb{H}+\langle 1\rangle\right)\\
&=\frac{d_1\cdots d_n}{2}\cdot\mb{H}.
\end{align*}
Alternately, one can note that $\deg_p\sigma$ will be of the form $a\langle 1\rangle+b\langle -1\rangle$ for some $a,b\in\mb{Z}$. The values of $a$ and $b$ can then be determined by separately considering the rank and signature of $\deg_0 x^{d_i}$ for $1\leq i\leq n$. Either approach gives us $e(\mc{O}_{d_1,\ldots,d_n})=\frac{d_1\cdots d_n}{2}\cdot\mb{H}$.
\end{proof}

\begin{rem}
Theorem~\ref{thm:euler class-orientable} also follows from~\cite[Theorem 7.1]{Lev17} as described in~\cite[Proposition 19]{SW18}.
\end{rem}

In Section~\ref{sec:local degree}, we give a geometric interpretation of $\deg_p(f_1,\ldots,f_n)$. Paired with Theorem~\ref{thm:euler class-orientable}, this will prove Theorem~\ref{thm:main-orientable}.

\section{Formulas and geometric interpretations for the local degree}\label{sec:local degree}
In Section~\ref{sec:euler class}, we computed the Euler class of the vector bundle $\mc{O}_{d_1,\ldots,d_n}\to\mb{P}^n$. Roughly speaking, this Euler class equals the sum of local degrees over the intersection locus of $f_1,\ldots,f_n$. Once we provide a geometric interpretation of the local degree $\deg_p(f_1,\ldots,f_n)$, we will have an equation that counts geometric information concerning the intersection points of $f_1,\ldots,f_n$. This enumerative geometric equation will constitute our enriched version of B\'ezout's Theorem.

\subsection{Intersection multiplicity is the rank of the local degree}
We set out to prove that the intersection multiplicity $i_p(f_1,\ldots,f_n)$ of $f_1,\ldots,f_n$ at $p$ is the rank of the local degree $\deg_p(f_1,\ldots,f_n)$. We begin with a brief discussion of intersection multiplicity.

\begin{defn}\label{defn:intersection mult}\cite[Definition 4.1]{Sha13}
Given $p\in\mb{P}^n$ and hypersurfaces $f_1,\ldots,f_n$, the {\it intersection multiplicity} of $f_1,\ldots,f_n$ at $p$ is given by
\[i_p(f_1,\ldots,f_n)=\dim_k\mc{O}_{\mb{P}^n,p}/(f_1,\ldots,f_n).\]
\end{defn}

We next prove that this intersection multiplicity agrees with the rank of the local degree $\deg_p(f_1,\ldots,f_n)$. This will follow from the fact that both the intersection multiplicity and the local degree are given by local computations.

\begin{prop}\label{prop:rank=mult}
Let $f_1,\ldots,f_n$ be hypersurfaces in $\mb{P}^n$ that are all non-singular at a common intersection point $p$. Moreover, assume that $f_1,\ldots,f_n$ do not share a common component, so that $f_1\cap\cdots\cap f_n$ is a finite set. Then
\[\rank\deg_p(f_1,\ldots,f_n)=i_p(f_1,\ldots,f_n).\]
\end{prop}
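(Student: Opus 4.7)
The plan is a direct local computation that reduces the statement to a standard fact about Morel's local $\mathbb{A}^1$-degree. Since both the intersection multiplicity (by Definition~\ref{defn:intersection mult}) and the local degree (by its very definition via the trivializations above) are computed in a single affine chart, I expect no global subtleties.

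First I would reduce to an affine chart containing $p$. Without loss of generality, $p\in U_i$ for some $i$, and the twisted Nisnevich coordinates $\tilde{\vphi}_i:U_i\xrightarrow{\sim}\mb{A}^n_k$ of Proposition~\ref{prop:nisnevich coords} are in fact an isomorphism, so we do not need the error-term correction of~\cite[Lemmas 24--28]{KW17}. Under the compatible local trivialization of $\mc{O}_{d_1,\ldots,d_n}|_{U_i}$ from Proposition~\ref{prop:local trivs} (combined with the possible sign twist from $\tilde{\vphi}_i$), the section $\sigma=(f_1,\ldots,f_n)$ becomes a polynomial map
\[
F=(\pm f_1/x_i^{d_1},\ldots,f_n/x_i^{d_n}):\mb{A}^n_k\longrightarrow\mb{A}^n_k
\]
in the coordinates $y_j=x_j/x_i$, so that by construction $\deg_p(f_1,\ldots,f_n)=\deg_{\tilde{\vphi}_i(p)}F$.

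Next I would invoke the identification of Morel's local $\mb{A}^1$-degree with the Eisenbud--Levine--Khimshiashvili class established in~\cite{KW19}. This class is, by definition, a symmetric bilinear form on the finite-dimensional $k$-algebra
\[
Q_p:=\mc{O}_{\mb{A}^n,\tilde{\vphi}_i(p)}/\bigl(f_1/x_i^{d_1},\ldots,f_n/x_i^{d_n}\bigr),
\]
and the rank of this form is exactly $\dim_k Q_p$. Since $x_i$ is a unit in $\mc{O}_{\mb{P}^n,p}$ and the signs $\pm 1$ are units as well, the ideal $(f_1/x_i^{d_1},\ldots,f_n/x_i^{d_n})$ coincides with $(f_1,\ldots,f_n)$ in this local ring, and $\tilde{\vphi}_i$ induces an isomorphism $\mc{O}_{\mb{P}^n,p}\cong\mc{O}_{\mb{A}^n,\tilde{\vphi}_i(p)}$. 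Therefore $Q_p\cong\mc{O}_{\mb{P}^n,p}/(f_1,\ldots,f_n)$, whose $k$-dimension is $i_p(f_1,\ldots,f_n)$ by Definition~\ref{defn:intersection mult}, giving the desired equality $\rank\deg_p(f_1,\ldots,f_n)=i_p(f_1,\ldots,f_n)$.

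The main obstacle is essentially none of substance: the argument is bookkeeping once one is willing to cite the rank property of the EKL/local-degree form from~\cite{KW19}. The only potential pitfall is making sure the twist by $x_i^{-d_j}$ and the occasional sign coming from $\tilde{\vphi}_i$ do not alter the quotient algebra, which is immediate since they are units. The non-singularity hypothesis on each $f_j$ at $p$ and the absence of a common component are used only to guarantee that $p$ is an isolated zero of $\sigma$, so that $\deg_p\sigma$ and $i_p(f_1,\ldots,f_n)$ are well-defined finite quantities in the first place.
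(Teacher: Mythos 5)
Your proposal is correct and follows essentially the same route as the paper's proof: pass to an affine chart where the Nisnevich coordinates are an isomorphism, cite~\cite{KW19} for the fact that the rank of the local degree is $\dim_k Q_p$, and identify $Q_p$ with $\mc{O}_{\mb{P}^n,p}/(f_1,\ldots,f_n)$ so that its dimension is $i_p(f_1,\ldots,f_n)$. Your extra bookkeeping about the units $x_i^{d_j}$ and the sign twist is a harmless elaboration of the same argument.
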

\begin{proof}
Let $U$ be an affine neighborhood of $p$ with local coordinates $\vphi:U\to\mb{A}^n$. An algorithmic method for computing $\deg_p(f_1,\ldots,f_n)$ is outlined in~\cite[Table 1]{KW19}. In this method, we have that $\rank\deg_p(f_1,\ldots,f_n)=\dim_k\vphi_*(\mc{O}_{U,p}/(f_1,\ldots,f_n))$. (In the notation of~\cite{KW19}, $\vphi_*(\mc{O}_{U,p}/(f_1,\ldots,f_n))$ corresponds to $Q_p$. The assumption that $f_1\cap\ldots\cap f_n$ be a finite set ensures that $p$ is an isolated zero, so that $\dim_k Q_p$ is finite.) First, we have that $\mc{O}_{\mb{P}^n,p}/(f_1,\ldots,f_n)$ is isomorphic (as a $k$-algebra) to $\mc{O}_{U,p}/(f_1,\ldots,f_n)$. Since $\vphi:U\to\mb{A}^n$ is an isomorphism, we also have that $\mc{O}_{U,p}/(f_1,\ldots,f_n)$ and $\vphi_*(\mc{O}_{U,p}/(f_1,\ldots,f_n))$ are isomorphic as $k$-algebras. Thus $\rank\deg_p(f_1,\ldots,f_n)=i_p(f_1,\ldots,f_n)$, as desired.
\end{proof}

\subsection{Transverse intersections}
When the hypersurfaces $f_1,\ldots,f_n$ intersect transversely at a point $p$, the local degree $\deg_p(f_1,\ldots,f_n)$ has a geometric interpretation in terms of the gradient directions of each $f_i$ at $p$. Intuitively, $f_1,\ldots,f_n$ intersect transversely at $p$ as subschemes of $\mb{P}^n$ if their tangent spaces at $p$ overlap as little as possible. This idea can be made rigorous by the following definition.

\begin{defn}\cite[p. 18]{3264}
The subschemes $f_1,\ldots,f_n$ of $\mb{P}^n$ {\it intersect transversely at $p$} if each $f_i$ is smooth at $p$ and if $\codim(\bigcap_i T_p f_i)=\sum_i\codim T_p f_i$, where $\codim T_p f_i$ refers to the codimension of $T_p f_i$ as a subspace of the vector space $T_p\mb{P}^n_k\cong k(p)^n$.
\end{defn}

Transverse intersections of $n$ hypersurfaces in $\mb{P}^n$ are completely characterized by their intersection multiplicity.

\begin{prop}\label{prop:transverse=rank1}
The hypersurfaces $f_1,\ldots,f_n$ of $\mb{P}^n_k$ intersect transversely at a point $p$ if and only if $i_p(f_1,\ldots,f_n)=[k(p):k]$. In particular, if $p$ is a $k$-rational point, then $f_1,\ldots,f_n$ intersect transversely at $p$ if and only if $i_p(f_1,\ldots,f_n)=1$.
\end{prop}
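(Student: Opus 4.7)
The plan is to translate both conditions in the biconditional into the same algebraic condition on the ideal $(f_1,\ldots,f_n)$ inside the regular local ring $A := \mc{O}_{\mb{P}^n,p}$, namely that it equals the maximal ideal $\mf{m}$. Note that $A$ is a regular local ring of dimension $n$ with residue field $\kappa := k(p)$, and that since $p$ is an intersection point, each $f_i$ lies in $\mf{m}$.

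First I would handle the intersection-multiplicity side. The quotient $A/(f_1,\ldots,f_n)$ is a finite-dimensional local $k$-algebra with residue field $\kappa$, so we always have $\dim_k A/(f_1,\ldots,f_n) \geq \dim_k \kappa = [\kappa:k]$, with equality precisely when $A/(f_1,\ldots,f_n) = \kappa$, equivalently when $(f_1,\ldots,f_n) = \mf{m}$. Thus by Definition~\ref{defn:intersection mult},
\[ i_p(f_1,\ldots,f_n) = [k(p):k] \iff (f_1,\ldots,f_n) = \mf{m}. \]

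Next I would translate transversality into the same ideal-theoretic condition. Under the standard identification $T_p \mb{P}^n \cong (\mf{m}/\mf{m}^2)^*$, the tangent space $T_p f_i$ is the kernel of the linear form on $\mf{m}/\mf{m}^2$ given by the class $\overline{f_i}$. Smoothness of $f_i$ at $p$ is equivalent to $\overline{f_i} \neq 0$ in $\mf{m}/\mf{m}^2$, and the codimension of $\bigcap_i T_p f_i$ in $T_p \mb{P}^n$ equals the $\kappa$-dimension of the span of $\{\overline{f_1},\ldots,\overline{f_n}\}$ in $\mf{m}/\mf{m}^2$. Since $\dim_\kappa \mf{m}/\mf{m}^2 = n$, transverse intersection (i.e. this codimension equals $n$) is equivalent to $\overline{f_1},\ldots,\overline{f_n}$ being a $\kappa$-basis of $\mf{m}/\mf{m}^2$. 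By Nakayama's lemma applied to the finitely generated $A$-module $\mf{m}/(f_1,\ldots,f_n)$, this basis condition is in turn equivalent to $(f_1,\ldots,f_n) = \mf{m}$.

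Combining the two equivalences yields the desired biconditional. The particular case when $p$ is $k$-rational is immediate since then $[k(p):k] = 1$. I don't expect any serious obstacle: both directions reduce to the standard fact that a system of $n$ elements in the maximal ideal of an $n$-dimensional regular local ring generates it if and only if they form a regular system of parameters, i.e. project to a basis of $\mf{m}/\mf{m}^2$. The only mild care needed is that each $f_i$ automatically lies in $\mf{m}$ but may a priori lie in $\mf{m}^2$; however, in the direction ``$i_p = [\kappa:k] \Rightarrow$ transverse,'' the equality $(f_1,\ldots,f_n) = \mf{m}$ forces the $n$ classes $\overline{f_i}$ to span the $n$-dimensional space $\mf{m}/\mf{m}^2$ and hence to be nonzero, so each $f_i$ is automatically smooth at $p$.
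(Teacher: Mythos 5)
Your proof is correct and takes essentially the same route as the paper: both sides of the biconditional are reduced to the single condition $(f_1,\ldots,f_n)=\mf{m}_p$ in the local ring $\mc{O}_{\mb{P}^n,p}$, with the multiplicity side handled by the surjection onto the residue field exactly as in the paper. The only difference is that where the paper cites Shafarevich's results on local parameters for the equivalence between transversality and $(f_1,\ldots,f_n)=\mf{m}_p$, you prove that step directly via the identification $T_p\mb{P}^n\cong(\mf{m}_p/\mf{m}_p^2)^*$ and Nakayama's lemma.
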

\begin{proof}
Let $\mf{m}_p$ be the maximal ideal of $k[x_0,\ldots,x_n]$ corresponding to the point $p$. Since the intersection multiplicity $i_p(f_1,\ldots,f_n)$ is locally defined, we work in the local ring $k[x_0,\ldots,x_n]_{\mf{m}_p}=\mc{O}_{\mb{P}^n,p}$. The polynomials $f_1,\ldots,f_n$ are {\it local parameters} at $p$ in the sense of~\cite[Section 2.1]{Sha13} if and only if $(f_1,\ldots,f_n)=\mf{m}_p$ (see~\cite[Theorem 2.5]{Sha13}). By~\cite[Theorem 2.4]{Sha13}, we have $(f_1,\ldots,f_n)=\mf{m}_p$ if and only if $f_1,\ldots,f_n$ intersect transversely at $p$. Thus if $f_1,\ldots,f_n$ intersect transversely at $p$, then $(f_1,\ldots,f_n)=\mf{m}_p$ and hence $i_p(f_1,\ldots,f_n)=\dim_k\mc{O}_{\mb{P}^n,p}/\mf{m}_p=[k(p):k]$. On the other hand, the fact that $f_1,\ldots,f_n$ vanish at $p$ implies that $(f_1,\ldots,f_n)\subseteq\mf{m}_p$. Consequently, if $i_p(f_1,\ldots,f_n)=[k(p):k]=\dim_k\mc{O}_{\mb{P}^n,p}/\mf{m}_p$, then we have $(f_1,\ldots,f_n)=\mf{m}_p$ and hence $f_1,\ldots,f_n$ intersect transversely at $p$.
\end{proof}

By Proposition~\ref{prop:rank=mult}, it follows that $\rank\deg_p(f_1,\ldots,f_n)=[k(p):k]$ at transverse intersection points. As mentioned previously, $f_1,\ldots,f_n$ intersect transversely at $p$ if and only if $(f_1,\ldots,f_n)=\mf{m}_p$, which is equivalent to $p$ being a simple zero of the section $(f_1,\ldots,f_n)$. By a comment in~\cite[p. 17]{KW17}, the local degree $\deg_p(f_1,\ldots,f_n)$ at a simple zero $p$ is determined by the Jacobian of $f_1,\ldots,f_n$ (after locally trivializing). We make this precise below.

\begin{lem}\label{lem:deg of transverse}
Let $f_1,\ldots,f_n$ be hypersurfaces of $\mb{P}^n$ that intersect transversely at a point $p\in U_\ell$. To simplify notation, write $f^\ell_i:=f_i\circ\tilde{\vphi}_\ell^{-1}=f_i((-1)^\ell\cdot\frac{x_0}{x_\ell},\ldots,\frac{x_n}{x_\ell})$. Let
\[J_\ell=\det(\tfrac{\partial f^\ell_i}{\partial (x_j/x_\ell)})_{j\neq\ell},\] 
and let $\Tr_{k(p)/k}:\GW(k(p))\to\GW(k)$ be the trace on bilinear forms obtained by post-composing with the field trace $k(p)\to k$. Then
\[\deg_p(f_1,\ldots,f_n)=\Tr_{k(p)/k}\langle J_\ell(
\tilde{\vphi}_\ell(p))\rangle.\]
\end{lem}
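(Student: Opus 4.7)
The plan is to unwind the definitions so that the section $\sigma=(f_1,\ldots,f_n)$, read in the Nisnevich coordinates $\tilde{\vphi}_\ell$ and the compatible local trivialization of $\mc{O}_{d_1,\ldots,d_n}|_{U_\ell}$, becomes the polynomial map $F:=(f^\ell_1,\ldots,f^\ell_n):\mb{A}^n_k\to\mb{A}^n_k$, and then to invoke the standard formula for the local $\mb{A}^1$-degree of a polynomial map at a simple zero.

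First, I would observe that transverse intersection at $p$ makes $p$ a simple zero of $\sigma$. Indeed, the proof of Proposition~\ref{prop:transverse=rank1} shows that $f_1,\ldots,f_n$ intersect transversely at $p$ iff $(f_1,\ldots,f_n)=\mf{m}_p$ in $\mc{O}_{\mb{P}^n,p}$, so the zero of $\sigma$ at $p$ is isolated and reduced. Next, I would write $\sigma|_{U_\ell}$ as a polynomial map. Using the compatible local trivialization $(x_\ell/x_0)^{d_1}\oplus\cdots\oplus(x_\ell/x_0)^{d_n}$ from Lemma~\ref{lem:compatible-orientable case} (or Lemma~\ref{lem:compatible-non-orientable} in the non-relatively orientable case), the $i$-th component of $\sigma$ becomes the dehomogenization $f_i/x_\ell^{d_i}$, which when pulled back along $\tilde{\vphi}_\ell^{-1}$ is precisely $f^\ell_i$. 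Since $\tilde{\vphi}_\ell$ is an isomorphism, this yields
\[\deg_p\sigma=\deg_{\tilde{\vphi}_\ell(p)}F.\]

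Finally, I would apply the simple-zero formula for the local $\mb{A}^1$-degree. By~\cite[Table 1]{KW19} (see also the remark on~\cite[p.~17]{KW17}), at any simple zero $q$ of a polynomial endomorphism $F$ of $\mb{A}^n_k$, the local $\mb{A}^1$-degree is
\[\deg_q F=\Tr_{k(q)/k}\langle \det J_F(q)\rangle,\]
where $\det J_F$ is the Jacobian determinant in the coordinates of $\mb{A}^n_k$. Combining this with the previous step, and using that $\tilde{\vphi}_\ell$ induces an isomorphism $k(p)\cong k(\tilde{\vphi}_\ell(p))$ by Proposition~\ref{prop:nisnevich coords}, gives the claimed identity $\deg_p(f_1,\ldots,f_n)=\Tr_{k(p)/k}\langle J_\ell(\tilde{\vphi}_\ell(p))\rangle$.

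The main delicate point is bookkeeping of conventions: one must verify that the sign twists in $\tilde{\vphi}_\ell$, the distinguished basis $(-1)^\ell\cdot\partial_\ell$ of $\det\mc{T}\mb{P}^n|_{U_\ell}$ from Proposition~\ref{prop:nisnevich coords}, and the defining formula $J_\ell=\det(\partial f^\ell_i/\partial(x_j/x_\ell))_{j\neq\ell}$ conspire so that the Jacobian of $F$ in the Nisnevich coordinates agrees with $J_\ell$ (rather than differing by a sign). The compatibility of the trivialization, in the sense of Definition~\ref{defn:rel orientation}, is exactly what guarantees this: the square class of the Jacobian is well-defined as a function of the compatible choices. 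Once this matching is carried out, the lemma is a direct application of the standard simple-zero formula, and both the relatively orientable and non-orientable cases are handled uniformly since the local trivializations of $\mc{O}_{d_1,\ldots,d_n}|_{U_\ell}$ are the same in both settings.
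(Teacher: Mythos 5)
Your proposal is correct and follows essentially the same route as the paper: establish that transversality makes $p$ a simple zero, express the section in the compatible local trivialization and twisted Nisnevich coordinates as the polynomial map $(f^\ell_1,\ldots,f^\ell_n)$, and invoke the simple-zero formula of \cite[p.~17]{KW17} (equivalently \cite[Table 1]{KW19}) together with the compatibility established in Lemmas~\ref{lem:compatible-orientable case} and~\ref{lem:compatible-non-orientable}. Your extra remarks on the sign bookkeeping of the twist $\tilde{\vphi}_\ell$ only make explicit what the paper's appeal to those compatibility lemmas is doing implicitly.
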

\begin{proof}
By Lemma~\ref{lem:compatible-orientable case} and Lemma~\ref{lem:compatible-non-orientable}, the local trivialization $(f^\ell_1,\ldots,f^\ell_n)$ of our section $(f_1,\ldots,f_n)$ is compatible with our chosen relative orientation (relative to the effective Cartier divisor $D=\{x_0=0\})$ and Nisnevich coordinates $\{(U_i,\tilde{\vphi}_i)\}$. Since $p$ is a simple zero of $(f_1,\ldots,f_n)$, \cite[p. 17]{KW17} gives us that
\[\deg_p(f_1,\ldots,f_n)=\Tr_{k(p)/k}\langle J_\ell(\tilde{\vphi}_\ell(p))\rangle,\]
as desired.
\end{proof}

Moving forward, we will write $J_\ell(p)$ instead of $J_\ell(\tilde{\vphi}_\ell(p))$.
 
To obtain a geometric interpretation of the local degree at transverse intersections, we show how the Jacobian arises as a cross product of gradients. Working in one of our open affines, say $U_\ell\cong\mb{A}^n_k$, we let $e_j$ be the unit vector corresponding to the $\frac{x_j}{x_\ell}$-axis for all $j\neq\ell$. The gradient of $f^\ell_i$ is given by $\nabla f^\ell_i=\sum_{j\neq\ell}\frac{\partial f^\ell_i}{\partial (x_j/x_\ell)}\cdot e_j$.

\begin{defn}
Let $v_i=\sum_j a_{ij}\cdot e_j$ be a vector in $\mb{A}^n$, where $a_{ij}\in k$ and $e_j$ is as above. We may consider $\mb{A}^n$ as a subspace of $\mb{A}^{n+1}$, with a new unit vector $e_{n+1}:=e_1\times\cdots\times e_n$ corresponding to the direction perpendicular to $e_1,\ldots,e_n$. The {\it ($n$-ary) cross product} of $v_1,\ldots,v_n$ is a vector in the direction of $e_{n+1}$ given by
\[\bigtimes_{i=1}^nv_i=
\det\begin{pmatrix}
a_{11} & \cdots & a_{1n} & 0\\
\vdots & \ddots & \vdots & \vdots\\
a_{n1} & \cdots & a_{nn} & 0\\
e_1 & \cdots & e_n & e_{n+1}
\end{pmatrix}.\]
The dot product $(\bigtimes_{i=1}^n v_i)\cdot e_{n+1}$ is the signed volume of the parallelpiped bounded by $v_1,\ldots,v_n$. Note that this definition agrees with the more familiar notion of the cross product on $\mb{R}^3$.
\end{defn}

Under this definition, the Jacobian $J_\ell(p)$ is the value of the dot product $(\bigtimes_{i\neq\ell}(\nabla f^\ell_i(p)))\cdot e_{n+1}$, where $f^\ell_i(p)=f_i(\tilde{\vphi}_\ell(p))$. Thus the local degree at transverse intersections is described geometrically by the volume of the parallelpiped defined by the gradient vectors $\{\nabla f^\ell_i(p)\}_{i\neq\ell}$.

\subsection{Non-transverse intersections}\label{sec:non-transverse}
When our hypersurfaces $f_1,\ldots,f_n$ do not intersect transversely, the gradient directions of some $f_i$ and $f_j$ coincide. As a result, the parallelpiped spanned by the gradients of $f_1,\ldots,f_n$ has volume 0, so our previous geometric interpretation of $\deg_p(f_1,\ldots,f_n)$ no longer makes sense. We will discuss non-transverse intersections of pairs of curves in $\mb{P}^2$ and leave open the higher-dimensional case. Our goal is to reduce the calculation of the degree of two polynomials in two variables to the calculation of the degree of one power series in one variable. Let $f,g\in k[x,y]$ be polynomials of degrees $c$ and $d$, respectively. For simplicity, we will assume that $p=(0,0)$ is the origin and that $\frac{\partial g}{\partial y}(0,0)\neq 0$. We will also assume that $\Char{k}=0$, and we will discuss how to modify the following construction in positive characteristic.

\begin{lem}
Let $f$ and $g$ be curves in $\mb{P}^2$ that intersect at the origin, and assume that $\frac{\partial g}{\partial y}(0,0)\neq 0$. Then there exists some positive integer $n$ and some $a_n\in k^\times$ such that
\[\deg_0(f,g)=\begin{cases}\frac{n-1}{2}\cdot\mb{H}+\langle a_n\rangle & n\text{ odd},\\ \frac{n}{2}\cdot\mb{H} & n\text{ even}.\end{cases}\]
\end{lem}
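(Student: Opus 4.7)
The plan is to reduce the two-variable local degree to a one-variable local degree, then invoke the computation of $\deg_0(ax^d+\cdots)$ already recorded inside the proof of Theorem~\ref{thm:euler class-orientable}. First, since $\partial g/\partial y(0,0) \in k^{\times}$, Hensel's lemma (equivalently, Weierstrass preparation for a $y$-regular series) applied to $g$ viewed as a power series in $y$ over $k[[x]]$ yields a unique $\phi(x) \in x\cdot k[[x]]$ and a unit $u(x,y) \in k[[x,y]]^{\times}$ with $g(x,y) = (y-\phi(x))\cdot u(x,y)$. Set $h(x) := f(x,\phi(x)) \in k[[x]]$; since the origin is an isolated zero of $(f,g)$, the series $h$ is nonzero, so we may write $h(x) = a_n x^n + O(x^{n+1})$ for a positive integer $n$ and some $a_n \in k^{\times}$.

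Next, I would show that $\deg_0(f,g) = \langle c\rangle \cdot \deg_0 h$ for some $c \in k^{\times}$. The relation $(g) = (y-\phi(x))$ in $k[[x,y]]$ induces an isomorphism of local algebras
\[Q_0 := k[[x,y]]/(f,g) \;\cong\; k[[x]]/(h).\]
The Jacobian $f_x g_y - f_y g_x$ transforms under this isomorphism as follows: using $g_x \equiv -\phi'(x)\, u(x,\phi(x))$ and $g_y \equiv u(x,\phi(x))$ modulo $(y-\phi(x))$, together with the chain rule identity $h'(x) = f_x(x,\phi(x)) + f_y(x,\phi(x))\,\phi'(x)$, the Jacobian class maps to $u(x,\phi(x))\cdot h'(x)$ in $k[[x]]/(h)$. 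Via the Scheja--Storch/EKL description of the local $\mb{A}^1$-degree~\cite{KW19}, this means the form for $(f,g)$ equals that for $h$ rescaled by $\langle u(0,0)\rangle$.

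Finally, the proof of Theorem~\ref{thm:euler class-orientable} yields
\[\deg_0 h = \begin{cases} \frac{n-1}{2}\cdot\mb{H} + \langle a_n\rangle & n\text{ odd},\\ \frac{n}{2}\cdot\mb{H} & n\text{ even}.\end{cases}\]
Multiplying by $\langle c\rangle$ leaves the even-$n$ expression unchanged (since $\langle c\rangle\cdot\mb{H}=\mb{H}$), while in the odd case it merely replaces $a_n$ by $c\cdot a_n \in k^\times$; either way, the answer has the shape claimed in the lemma. The main obstacle is the Jacobian bookkeeping in the second step: making precise the sense in which the EKL form of the polynomial pair $(f,g)$ reduces to that of the power series $h$ up to a single class $\langle c\rangle$. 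In characteristic zero one can truncate $\phi$ and $u$ at sufficiently high order to return to a polynomial pair with the same $Q_0$ and the same Jacobian class, which is presumably why the author flags a separate discussion in positive characteristic.
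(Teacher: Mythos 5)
Your proposal is correct and follows essentially the same route as the paper: use Hensel's lemma (via the hypothesis $\tfrac{\partial g}{\partial y}(0,0)\neq 0$) to solve $g=0$ for $y$ as a power series in $x$, identify $k[[x,y]]/(f,g)$ with $k[[x]]/(h)$, track the Jacobian under this isomorphism, and invoke the Scheja--Storch/EKL identification of the local degree~\cite{KW19} together with the characteristic-zero fact that the form is determined by the Jacobian to reduce to the one-variable computation $\deg_0(a_nx^n)$. The only cosmetic difference is that you carry the Weierstrass unit $u$ through and rescale by $\langle u(0,0)\rangle$ at the end, whereas the paper first normalizes $g$ by the constant $g_y(0,0)$ using $\deg_0(f,ag)=\langle a\rangle\deg_0(f,g)$ and works with the power-series model directly (citing~\cite{SS75} for the passage to the completion) rather than truncating back to polynomials.
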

\begin{proof}
By~\cite{KW19}, the local degree $\deg_0(f,g)$ may be computed by the bilinear form constructed in~\cite{SS75}. We will refer to this bilinear form as the Scheja--Storch form. As discussed on~\cite[p. 178]{SS75}, the Scheja--Storch form constructed for the local ring $\frac{k[x,y]_0}{(f,g)}$ is isomorphic to the Scheja--Storch form for the completion $\frac{k[[x,y]]}{(f,g)}$. We may thus work in $k[[x,y]]$ in order to compute $\deg_0(f,g)$.

Note that if $a\in k^\times$, we have $\deg_0(f,ag)=\langle a\rangle\cdot\deg_0(f,g)$. We may thus scale $g$ so that $\frac{\partial g}{\partial y}(0)=1$. By Hensel's Lemma, there exists a power series $G(x)\in k[[x]]$ such that $G(0)=0$ and  $g(x,G(x))=0$. We thus obtain an isomorphism
\[\begin{tikzcd}[row sep=0em]
\frac{k[[x,y]]}{(f,g)}\arrow[r,"\cong"] & \frac{k[[x]]}{(f(x,G(x)))}\\
y\arrow[r,mapsto,"h"]& G(x).
\end{tikzcd}\]

Intuitively, the $h$ transforms the curve $g(x,y)$ into our horizontal axis, and the curve $f(x,G(x))$ is the image of $f(x,y)$ under this transformation. In order for the isomorphism $h:\frac{k[[x,y]]}{(f,g)}\xrightarrow{\cong}\frac{k[[x]]}{(f(x,G(x)))}$ to preserve the local degree, it suffices to show that $h$ sends $\Jac(f,g)|_{x=y=0}$ to $\Jac(f(x,G(x)))|_{x=0}$. Indeed, the local degree is determined by the Scheja--Storch form~\cite{KW19}, and the Scheja--Storch form is determined by the Jacobian in characteristic 0~\cite[(4.7) Korollar]{SS75}. It follows that, given presentations of two local complete intersections $k[x_1,\ldots,x_m]_p/(r_1,\ldots,r_m)$ and $k[x_1,\ldots,x_n]_q/(s_1,\ldots,s_n)$ and an isomorphism $\phi:\frac{k[x_1,\ldots,x_m]_p}{(r_1,\ldots,r_m)}\to\frac{k[x_1,\ldots,x_n]_q}{(s_1,\ldots,s_n)}$, the bilinear form $\deg_p(r_1,\ldots,r_m)$ is isomorphic to the bilinear form $\deg_q(s_1,\ldots,s_n)$ if $\phi(\Jac(r_1,\ldots,r_m)(p))=\Jac(s_1,\ldots,s_n)(q)$.

To show that the bilinear forms $\deg_0(f,g)$ and $\deg_0(f(x,G(x)))$ are isomorphic, we thus need $h$ to send the Jacobian of $f$ and $g$ at $(0,0)$ to the derivative of $f(x,G(x))$ at $0$. That is, we need
\begin{align*}
h(\Jac(f,g))|_{x=y=0}&=h(\tfrac{\partial f}{\partial x}\cdot\tfrac{\partial g}{\partial y}-\tfrac{\partial f}{\partial y}\cdot\tfrac{\partial g}{\partial x})|_{x=y=0}\\
&=[f_x(x,G(x))\cdot g_y(x,G(x))-f_y(x,G(x))\cdot g_x(x,G(x))]_{x=y=0}\\
&=f_x(0,0)\cdot g_y(0,0)-f_y(0,0)\cdot g_x(0,0)
\end{align*}
to be equal to
\begin{align*}
\frac{d}{dx}f(x,G(x))|_{x=0}&=[f_x(x,G(x))+G'(x)\cdot f_y(x,G(x))]_{x=0}\\
&=f_x(0,0)+G'(0)\cdot f_y(0,0).
\end{align*}

Since $G(0)=0$, we have $g_y(x,G(x))|_{x=0}=g_y(0,0)$, which is equal to 1 by assumption. By the chain rule, we have $0=\frac{d}{dx}g(x,G(x))=g_x(x,G(x))+g_y(x,G(x))\cdot G'(x)$, so $g_x(x,G(x))=-g_y(x,G(x))\cdot G'(x)$. Thus $g_x(x,G(x))|_{x=0}=-G'(0)$, so $f_x(0,0)\cdot g_y(0,0)-f_y(0,0)\cdot g_x(0,0)=f_x(0,0)+G'(0)\cdot f_y(0,0)$, as desired.

Writing $f(x,G(x))=\sum_{i=n}^\infty a_ix^i=a_nx^n(1+\sum_{i=1}^\infty b_ix^i)$ with $a_n\neq 0$, we note that $1+\sum_{i=1}^\infty b_ix^i$ is a unit in $k[[x]]$ and hence the Scheja--Storch form of $\frac{k[[x]]}{(f(x,G(x)))}$ is equal to that of $\frac{k[[x]]}{(a_nx^n)}$. We thus have that $\deg_0(f,g)=\deg_0(a_nx^n)$, which is given by $\frac{n-1}{2}\cdot\mb{H}+\langle a_n\rangle$ if $n$ is odd and $\frac{n}{2}\cdot\mb{H}$ if $n$ is even.
\end{proof}

\begin{rem}
In any characteristic, the Scheja--Storch form is determined by a distinguished generator $E$ of the socle of $\frac{k[x,y]_0}{(f,g)}$. In order to modify the previous argument for the positive characteristic case, one would need to ensure that $q\circ h$ sends $E$ to the distinguished socle generator of $\frac{k[[x]]}{(f(x,G(x)))}$.
\end{rem}

\begin{prop}
We have that $n=i_0(f,g)$.
\end{prop}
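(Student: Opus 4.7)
The plan is to translate the computation of $i_0(f,g)$ through the same isomorphism that was used to identify the local degree with $\deg_0(f(x,G(x)))$, and then observe that $f(x,G(x))$ differs from $x^n$ by a unit in $k[[x]]$.

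First, by Definition~\ref{defn:intersection mult}, we have $i_0(f,g)=\dim_k \mc{O}_{\mb{P}^2,0}/(f,g)$, and this local ring is canonically identified with $k[x,y]_{(x,y)}/(f,g)$. Since $f$ and $g$ intersect in an isolated point at the origin, the quotient $k[x,y]_{(x,y)}/(f,g)$ is a finite-dimensional $k$-vector space, so the ideal $(f,g)$ is $(x,y)$-primary in the local ring. The completion map $k[x,y]_{(x,y)} \to k[[x,y]]$ is faithfully flat and induces an isomorphism on quotients by $\mf{m}$-primary ideals, hence
\[
i_0(f,g)=\dim_k k[x,y]_{(x,y)}/(f,g)=\dim_k k[[x,y]]/(f,g).
\]

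Next, I would invoke the isomorphism $h:k[[x,y]]/(f,g)\xrightarrow{\cong}k[[x]]/(f(x,G(x)))$ constructed in the proof of the preceding lemma (sending $y\mapsto G(x)$). This gives
\[
i_0(f,g)=\dim_k k[[x]]/(f(x,G(x))).
\]

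Finally, writing $f(x,G(x))=a_nx^n\bigl(1+\sum_{i=1}^\infty b_ix^i\bigr)$ with $a_n\in k^\times$ as in the preceding proof, the series $1+\sum_{i\geq 1}b_ix^i$ is a unit in $k[[x]]$, so $(f(x,G(x)))=(x^n)$ as ideals in $k[[x]]$. Therefore
\[
i_0(f,g)=\dim_k k[[x]]/(x^n)=n,
\]
with basis $\{1,x,\ldots,x^{n-1}\}$. There is no real obstacle here; the only point requiring care is justifying the passage from the local ring to its completion, which is standard because $(f,g)$ is $\mf{m}$-primary.
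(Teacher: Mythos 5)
Your proof is correct, but it takes a genuinely different route from the paper's. The paper's argument is a one-line comparison of ranks: by Proposition~\ref{prop:rank=mult} the intersection multiplicity $i_0(f,g)$ equals $\rank\deg_0(f,g)$, and by the formula just established in the preceding lemma ($\deg_0(f,g)=\tfrac{n-1}{2}\cdot\mb{H}+\langle a_n\rangle$ or $\tfrac{n}{2}\cdot\mb{H}$) that rank is $n$; so both quantities are identified with the rank of the same bilinear form. You instead bypass the bilinear form entirely and compute $i_0(f,g)$ directly as a vector space dimension: pass from $\mc{O}_{\mb{P}^2,0}/(f,g)\cong k[x,y]_{(x,y)}/(f,g)$ to the completion (legitimate because the ideal is $\mf{m}$-primary, the quotient being Artinian), apply the isomorphism $h:k[[x,y]]/(f,g)\cong k[[x]]/(f(x,G(x)))$ from the preceding lemma, and observe that $f(x,G(x))$ is $x^n$ times a unit, so the quotient is $k[[x]]/(x^n)$ of dimension $n$. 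What your route buys is self-containedness: it does not invoke the rank-equals-multiplicity machinery (which rests on the Scheja--Storch/EKL identification of the local degree), and the dimension count itself is characteristic-independent, needing only Hensel's lemma and Weierstrass-type factorization of $g$; the one point requiring care, the passage to the completion, you flag and justify correctly. What the paper's route buys is brevity, since both ingredients (the rank computation and Proposition~\ref{prop:rank=mult}) are already on the table at that point in the text.
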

\begin{proof}
By the above remarks and Proposition~\ref{prop:rank=mult}, both $n$ and $i_0(f,g)$ are equal to the rank of $\deg_0(f,g)$.
\end{proof}

This proposition allows us to completely understand the local degree $\deg_0(f,g)$ when $f$ and $g$ meet at the origin with even multiplicity. When $f$ and $g$ intersect with odd multiplicity, it remains to study the term $\langle a_n\rangle$. We discuss the geometric interpretation of $a_n$ over $\mb{R}$ in Lemma~\ref{lem:non-transverse over R}. We also give a recursive description of $a_n$ in terms of the coefficients of $f$ and $g$. Let
\[f=\sum_{i+j=0}^c f_{i,j}x^iy^j\qquad\text{and}\qquad g=\sum_{i+j=0}^d g_{i,j}x^iy^j.\]

We compute $a_n$ as the coefficient of $x^n=x^{i_0(f,g)}$ in $f(x,G(x))$. Thus $a_n=\sum_{i+j=n}f_{i,j}\cdot\gamma(j)$, where $\gamma(j)$ is the coefficient of $x^j$ in $G(x)^j$. But $\gamma(j)$ is equal to the coefficient of $x^j$ in $(G_0+G_1x+\ldots+G_jx^j)^j$, where $G(x)=\sum_{i=0}^\infty G_i x^i$. Expanding this product, we see that
\[\gamma(j)=\sum_{\substack{t_0+\ldots+t_j=j\\ \sum_u ut_u=j}}\binom{j}{t_0,\ldots,t_j}\prod_{u=0}^j G_u^{t_u},\]
where $\binom{j}{t_0,\ldots,t_j}$ denotes the multinomial coefficient. All that remains is to determine the coefficients $G_i$ of the power series $G(x)$. This is accomplished by repeatedly taking implicit derivatives. By assumption, $g(0,0)=0$, so we have $G_0=0$. Next, evaluating the partial derivative $\frac{\partial g}{\partial x}=\sum_{i+j=0}^d g_{i,j}(ix^{i-1}y^j+jx^iy^{j-1}\cdot\frac{\partial y}{\partial x})=0$ at $(0,0)$ gives us that $G_1=\frac{\partial y}{\partial x}(0,0)=-g_{1,0}/g_{0,1}$. Iterating this process allows us to compute $G_i=\frac{1}{i!}\cdot\frac{\partial^i y}{\partial x^i}(0,0)$.

\begin{rem}
In positive characteristic, we need an alternative form of the derivative in order to write down a Taylor series $G(x)$ under Hensel's Lemma. The Hasse derivative~\cite[p. 64]{Gol03} should be suitable for this purpose.
\end{rem}

\section{Specializations over some specific fields}\label{sec:examples}
For the following discussion, we let $N=-n-1+\sum_{i=1}^n d_i$ and assume that $N\equiv 0\mod 2$ so that $\mc{O}_{d_1,\ldots,d_n}\to\mb{P}^n$ is relatively orientable. Throughout this article, we have generally assumed that all intersections of $f_1,\ldots,f_n$ are transverse. This allows us to give better geometric interpretations of $\deg_p(f_1,\ldots,f_n)$. However, we also address non-transverse intersections in Sections~\ref{sec:over C} and \ref{sec:over R}. Our approach is as follows. For any field $k$, taking the rank of bilinear forms gives a homomorphism $\GW(k)\to\mb{Z}$. When $k$ is algebraically closed, $\rank:\GW(k)\xrightarrow{\cong}\mb{Z}$ is an isomorphism. When $k$ is not algebraically closed, we apply some other invariant to get a homomorphism of the form $\rank\times\op{invariant}:\GW(k)\to\mb{Z}\times G$ for some group $G$. The spirit of $\mb{A}^1$-enumerative geometry is that the $\mb{Z}$-valued count coming from the rank describes the geometric phenomena of classical enumerative geometry, while the additional $G$-valued count coming from the other invariant carries extra arithmetic-geometric information. This extra arithmetic-geometric information enriches the classical enumerative theorem when we work over a non-algebraically closed field.

\subsection{B\'ezout's Theorem over $\mb{C}$}\label{sec:over C}
Since $\mb{C}$ is algebraically closed, $\rank:\GW(\mb{C})\xrightarrow{\cong}\mb{Z}$ is an isomorphism. We thus recover B\'ezout's Theorem over $\mb{C}$ by applying $\rank$ to both sides of Equation~\ref{eq:main-orientable}. We know that $\rank\mb{H}=2$, so $\rank e(\mc{O}_{d_1,\ldots,d_n})=d_1\cdots d_n$. Moreover, we have $\rank\deg_p(f_1,\ldots,f_n)=i_p(f_1,\ldots,f_n)$ by Proposition~\ref{prop:rank=mult}. This gives us Equation~\ref{eq:bezout}, as expected.

\subsection{B\'ezout's Theorem over $\mb{R}$}\label{sec:over R}
We can represent any non-degenerate symmetric bilinear form over $\mb{R}$ by a diagonal matrix, where each diagonal entry is either $1,-1$, or $0$. By Sylvester's law of inertia, the isomorphism class of a bilinear form over $\mb{R}$ is determined by its rank and its signature, which we define to be the difference between the number of $1$s and the number of $-1$s on the diagonal. We thus have an isomorphism $\GW(\mb{R})\cong\mb{Z}\times\mb{Z}$ induced by $\rank\times\sign:\GW(\mb{R})\to\mb{Z}\times\mb{Z}$.

\begin{rem}
Since $|\sign(\beta)|\leq\rank(\beta)$, the homomorphism $\rank\times\sign:\GW(\mb{R})\to\mb{Z}\times\mb{Z}$ is not surjective. However, there is a group isomorphism $\GW(\mb{R})\cong\mb{Z}\times\mb{Z}$~\cite[Chapter II, Theorem 3.2 (4)]{Lam05}. Since $\rank\times\sign$ is injective, it follows that the image of $\rank\times\sign$ is isomorphic to $\mb{Z}\times\mb{Z}$.
\end{rem}

We obtain B\'ezout's Theorem over $\mb{R}$ by applying $\sign$ to both sides of Equation~\ref{eq:main-orientable}. Since $\sign\mb{H}=0$, we have that $\sign e(\mc{O}_{d_1,\ldots,d_n})=0$. When $f_1,\ldots,f_n$ intersect transversely at $p\in U_\ell$, the signature of $\deg_p(f_1,\ldots,f_n)$ is given by the sign of the volume of the parallelpiped defined by the gradient vectors $\{\nabla f^\ell_i(p)\}_{i\neq\ell}$. 

\begin{rem}\label{rem:complex-roots-sign-zero}
If $p$ is a non-rational intersection point of $f_1,\ldots,f_n$, then
\[\sign\deg_p(f_1,\ldots,f_n)=0.\]
To see this, suppose $\deg_p(f_1,\ldots,f_n)=\Tr_{\mb{C}/\mb{R}}\langle a+ib\rangle$. Since every element of $\mb{C}$ is a square, we have $\langle a+ib\rangle=\langle 1\rangle$. This bilinear form can thus be represented by
\[\begin{pmatrix}1&0\\0&-1\end{pmatrix},\]
which indeed has signature zero.
\end{rem}

By Remark~\ref{rem:complex-roots-sign-zero}, the local degree at non-real zeros does not contribute anything to our overall signed count $\sign{e(\mc{O}_{d_1,\ldots,d_n})}$. In particular, we only need to consider real zeros, so we may restrict our attention to the real points of $\mb{P}^n$ and the hypersurfaces $f_1,\ldots,f_n$. This allows us to apply Milnor's local alteration trick~\cite[\S 6, Step 2]{Mil65} from real differential topology.

\begin{lem}\label{lem:non-transverse over R}
Let $\sigma\in\mc{O}_{d_1,\ldots,d_n}$ be a section corresponding to the hypersurfaces $f_1,\ldots,f_n$ in $\mb{P}^n_{\mb{R}}$ that intersect at a rational point $p$, and let $U$ be an open neighborhood (in the real topology) about $p$ that does not contain any other zeros of $\sigma$. Then there exist open neighborhoods $U'\subseteq U$ about $p$ and $V\subseteq H^0(\mb{P}^n_{\mb{R}},\mc{O}_{d_1,\ldots,d_n})$ about $\sigma$ such that $\sign\deg_p\sigma=\sum_{q\in U'}\sign\deg_q\sigma'$ for all $\sigma'\in V$.
\end{lem}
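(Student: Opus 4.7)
The plan is to identify the signature of the local $\mb{A}^1$-degree at a rational real point with the classical local Brouwer degree, and then invoke Milnor's local alteration argument. Since $p \in U_\ell$ for some $\ell$, the Nisnevich chart $\tilde{\vphi}_\ell$ and the compatible local trivialization of $\mc{O}_{d_1,\ldots,d_n}|_{U_\ell}$ from Section~\ref{sec:rel orientations} write $\sigma|_{U}$ as an $n$-tuple $F = (f_1^\ell, \ldots, f_n^\ell)$ of real analytic functions on $\tilde{\vphi}_\ell(U) \subseteq \mb{R}^n$ with an isolated zero at $\tilde{\vphi}_\ell(p)$. Because $\deg_p \sigma$ is computed via the Eisenbud--Levine--Khimshiashvili form by~\cite{KW19}, its signature equals the local real Brouwer degree of $F$ at $\tilde{\vphi}_\ell(p)$, that is, the topological degree of $F/|F|: S_\eps \to S^{n-1}$ for any sufficiently small sphere $S_\eps$ around $\tilde{\vphi}_\ell(p)$.

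Next I would pin down $U'$ and $V$. Take $U' = \tilde{\vphi}_\ell^{-1}(B_\eps)$ where $B_\eps \subset \mb{R}^n$ is an open ball small enough that $\overline{B_\eps} \subseteq \tilde{\vphi}_\ell(U)$ and $F$ does not vanish on $\partial B_\eps$; such $\eps$ exists because $p$ is the only zero of $\sigma$ in $U$ and the zero locus is closed. The assignment $\sigma' \mapsto F'|_{\overline{B_\eps}}$ is continuous in the $C^0$ topology on the finite-dimensional vector space $H^0(\mb{P}^n_{\mb{R}}, \mc{O}_{d_1,\ldots,d_n})$, so compactness of $\partial B_\eps$ together with $\min_{\partial B_\eps}|F| > 0$ gives an open neighborhood $V$ of $\sigma$ on which $F'|_{\partial B_\eps}$ is nowhere zero. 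Shrinking $V$ further if needed, I may assume (by Lemma~\ref{lem:euler class-independent} and a generic smoothness argument) that each such $\sigma'$ has only finitely many, necessarily isolated, zeros in $U'$.

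For any $\sigma' \in V$, the straight-line homotopy from $F'/|F'|$ to $F/|F|$ on $\partial B_\eps$ avoids the origin, so $\deg(F'/|F'|) = \deg(F/|F|) = \sign \deg_p \sigma$. Additivity of the local Brouwer degree over the isolated zeros of $F'$ in $B_\eps$ then yields $\sum_{q \in U'} \sign \deg_q \sigma' = \deg(F'/|F'|) = \sign \deg_p \sigma$, which is the claim. The main obstacle is the careful bookkeeping in the first step: one must track the signs introduced by the twisted coordinates $\tilde{\vphi}_\ell$ and the compatible trivializations of Lemmas~\ref{lem:compatible-orientable case} and~\ref{lem:compatible-non-orientable} consistently enough to conclude that $\sign \deg_p \sigma$ really equals the Brouwer degree of $F$, and not merely agrees with it up to a global sign. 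Once this is secured, the rest is a direct application of the classical Milnor perturbation lemma from~\cite[\S 6]{Mil65}.
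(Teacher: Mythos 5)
Your proposal is correct and takes essentially the same route as the paper: both identify $\sign\deg_p\sigma$ with the local topological index at $\tilde{\vphi}_\ell(p)$ via Eisenbud--Levine (granting the compatibility of trivializations from Lemmas~\ref{lem:compatible-orientable case} and~\ref{lem:compatible-non-orientable}), and then deduce the claim from local constancy of the degree of the normalized map on a small sphere together with additivity over the zeros inside. The only difference is cosmetic: the paper runs the argument through Milnor's vector-field index and local alteration trick following Larson--Vogt, whereas you use homotopy invariance and additivity of the local Brouwer degree at isolated (possibly degenerate) zeros directly.
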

\begin{proof}
This follows from the proof of~\cite[Lemma 2.4]{LV19}. We recall the relevant details for the reader's convenience. Let $\vphi:\mc{O}_{d_1,\ldots,d_n}|_U\xrightarrow{\cong}\mb{R}^n$ and $\psi:\mc{T}U\xrightarrow{\cong}\mb{R}^n$ be local trivializations such that $\det(\vphi^{-1}\circ\psi)$ is a square in $\mc{O}(-n-1+\sum_{i=1}^n d_i)$ under the chosen relative orientation. On $U$, we can equate sections $\sigma\in\mc{O}_{d_1,\ldots,d_n}|_U$ with vector fields $v_\sigma=\psi^{-1}\circ\vphi(\sigma)$ on $U$, which allows us to use Milnor's local alteration trick as follows. Let $U''\subset U'\subset U$ be sufficiently small (real) open neighborhoods about $p$, and let $\lambda:U\to[0,1]$ be a smooth bump function such that $\lambda|_{U''}=1$ and $\lambda|_{U\backslash U'}=0$. Taking a sufficiently small regular value $y$ of $v_\sigma$, the vector field $v(x)=v_\sigma(x)-\lambda(x)y$ is non-degenerate on $U'$. Let $\iota_q w$ denote Milnor's local index of a vector field $w$ at a zero $q$. By~\cite[\S 6, Theorem 1]{Mil65}, we have that $\sum_{q\in U'}\iota_q v$ is equal to the degree of the Gauss map $\bar{v}:\partial U'\to S^{n-1}$. The degree of the Gauss map, and hence the sum $\sum_{q\in U'}\iota_q v$, is continuous (and thus locally constant) in $\sigma$. Let $V\subseteq H^0(\mb{P}^n_\mb{R},\mc{O}_{d_1,\ldots,d_n})$ be a sufficiently small neighborhood about $\sigma$; in particular, the zeros of the altered vector field $v'$ corresponding to the section $\sigma'$ should remain in $U'$ as $\sigma'$ varies. Then for all $\sigma'\in V$, we have that $\iota_p v=\sum_{q\in U'}\iota_q v'$. Finally, we remark that $\iota_p v=\sign\deg_p\sigma$, as was proved by Eisenbud and Levine~\cite[Main Theorem]{Eis78}.
\end{proof}

As an aside, this proof implies that the degree of the Gauss map associated to the hypersurfaces $f_1,\ldots,f_n$ at an intersection point $p$ is bounded by the intersection multiplicity $i_p(f_1,\ldots,f_n)$. We state this as a proposition.

\begin{prop}
Let $\sigma$ and $v$ be as in Lemma~\ref{lem:non-transverse over R}, and let $\bar{v}$ be the corresponding Gauss map. Then $|\deg\bar{v}|\leq i_p(f_1,\ldots,f_n)$.
\end{prop}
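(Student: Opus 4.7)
The plan is to combine the identities established in the proof of Lemma~\ref{lem:non-transverse over R} with the classical conservation of intersection multiplicity under small deformations. Recall from that proof that we took a sufficiently small regular value $y$ of $v_\sigma$ and formed the altered vector field $v = v_\sigma - \lambda y$, whose zeros in $U'$ are all simple real zeros, and that
\[
\deg \bar v \;=\; \sum_{q \in U'} \iota_q v.
\]
Each simple real zero $q$ of $v$ satisfies $\iota_q v = \pm 1$, so by the triangle inequality $|\deg \bar v| \leq \#\{q \in U' : v(q) = 0\}$. The task is therefore to bound the number of real zeros of $v$ in $U'$ by $i_p(f_1,\ldots,f_n)$.

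Next, I would translate the zeros of $v$ back into zeros of a nearby section. Since $v$ corresponds under the local trivializations $\varphi$ and $\psi$ to a perturbation $\sigma'$ of the original section (in fact lying in a polynomial neighborhood $V$ of $\sigma$, after approximating the bump-function construction by a polynomial section; by the continuity argument inside the proof of Lemma~\ref{lem:non-transverse over R} this does not change any of the counts), the real zeros of $v$ in $U'$ are precisely the real zeros of $\sigma'$ in $U'$. Each of these zeros is simple, hence transverse, hence has intersection multiplicity~$1$ over $\mathbb{R}$ by Proposition~\ref{prop:transverse=rank1}.

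Now I invoke conservation of intersection multiplicity: for a sufficiently small perturbation $\sigma'$ of $\sigma$, the sum of intersection multiplicities of the (complex) zeros of $\sigma'$ in a small complex neighborhood of $p$ equals $i_p(f_1,\ldots,f_n)$. Equivalently, by Proposition~\ref{prop:rank=mult}, this is the conservation of the rank of the local degree, which follows from the independence of the local degree under small deformations with isolated zero locus (this is the local version of Lemma~\ref{lem:euler class-independent}, and may also be seen from semicontinuity of the length of $\mc{O}_{\mb{P}^n,p}/(f_1,\ldots,f_n)$). Since every real zero of $\sigma'$ is in particular a complex zero, and each complex simple zero contributes at least $1$ to the multiplicity count, the number of real zeros of $\sigma'$ in $U'$ is bounded above by $i_p(f_1,\ldots,f_n)$.

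Combining these steps gives $|\deg \bar v| \leq i_p(f_1,\ldots,f_n)$. The main obstacle I anticipate is the technical bookkeeping in the second paragraph: the altered vector field $v$ was constructed using a smooth (non-algebraic) bump function, so the passage from $v$ to a genuine polynomial section $\sigma' \in V$ requires care, although the argument of Lemma~\ref{lem:non-transverse over R} already shows that this can be done without affecting either $\deg \bar v$ or the count of zeros in $U'$. Everything else is an application of conservation of multiplicity together with the fact that simple real zeros carry local index $\pm 1$.
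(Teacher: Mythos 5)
Your overall strategy can be made to work, but the key technical step is not justified the way you claim, and this is a genuine gap. The proof of Lemma~\ref{lem:non-transverse over R} shows that the \emph{signed} count $\sum_{q\in U'}\iota_q v'$ (equivalently the degree of the Gauss map) is locally constant in the section; it says nothing about the \emph{number} of zeros in $U'$, which is what your argument needs. The unsigned zero count is not preserved under small smooth perturbations at all (already for $x^2$ in one variable, a $C^0$-small smooth perturbation can have arbitrarily many real zeros), and the altered field $v=v_\sigma-\lambda y$ is precisely such a non-algebraic perturbation because of the bump function $\lambda$; so ``conservation of intersection multiplicity'' cannot be applied to $v$ itself, and the assertion that passing from $v$ to a polynomial section $\sigma'\in V$ leaves the count of zeros in $U'$ unchanged is unsupported. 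The repair is to discard $v$ at that point and work with a genuinely algebraic perturbation: choose $\sigma'\in V$ generic, so that its zeros near $p$ are simple; since $v_{\sigma'}|_{\partial U'}$ is homotopic to $v_\sigma|_{\partial U'}$ through nonvanishing fields, $\deg\bar{v}=\sum_{q\in U'}\iota_q v_{\sigma'}$ with each index $\pm 1$, and the number of real zeros of $\sigma'$ near $p$ is at most the number of complex zeros counted with multiplicity, which equals $i_p(f_1,\ldots,f_n)$ by the classical conservation of number. Note that this last ingredient must be imported from outside the paper; it is not Lemma~\ref{lem:euler class-independent}, which is a global statement about Euler numbers, not a local statement about deformations of an isolated zero.

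Even once repaired, this is a long detour compared with the intended argument, whose ingredients you already had in hand. By Milnor's theorem and the Eisenbud--Levine theorem, both of which are invoked in the proof of Lemma~\ref{lem:non-transverse over R}, one has $\deg\bar{v}=\iota_p v_\sigma=\sign\deg_p\sigma$ directly, with no perturbation at all; since $|\sign\beta|\leq\rank\beta$ for any symmetric bilinear form $\beta$, Proposition~\ref{prop:rank=mult} immediately gives $|\deg\bar{v}|\leq\rank\deg_p\sigma=i_p(f_1,\ldots,f_n)$. Your route, when fixed, proves the same bound but trades the elementary inequality between signature and rank for a conservation-of-number argument and the bookkeeping needed to replace the bump-function alteration by an algebraic one.
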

\begin{proof}
By~\cite[\S 6, Theorem 1]{Mil65} and ~\cite[Main Theorem]{Eis78}, we have that $\deg\bar{v}=\sign\deg_p\sigma$. Since $|\sign\deg_p\sigma|\leq\rank\deg_p\sigma$, Proposition~\ref{prop:rank=mult} implies that $|\deg\bar{v}|\leq i_p(f_1,\ldots,f_n)$.
\end{proof}

\begin{rem}
By Lemma~\ref{lem:non-transverse over R}, we can compute the crossing sign of a non-transverse intersection by slightly perturbing our chosen section. Since generic intersections are transverse, we may choose our new section to have only transverse intersections. As a consequence, the crossing sign of a non-transverse intersection is given by a sum of crossing signs of transverse intersections. This is illustrated in Figure~\ref{fig:crossing sign}.
\end{rem}

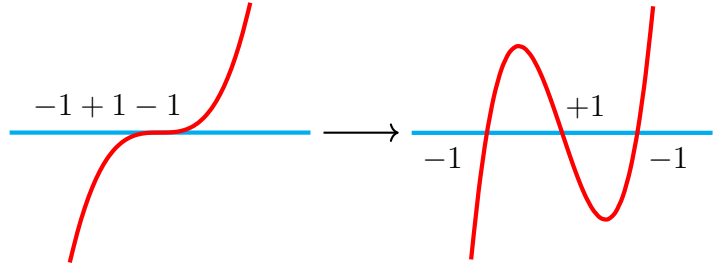
\begin{figure}[h]
\centering
\begin{tikzpicture}
\node (a) at (0,0)
{
	\begin{tikzpicture}
      \draw [cyan,ultra thick] (-2,0) -- (2,0);
      \draw[domain=-1.2:1.2,smooth,variable=\x,red,ultra thick]  plot ({\x},{\x*\x*\x});
      \node at (-0.7,0.35) {$-1+1-1$};
	\end{tikzpicture}
};
\node (b) at (a.east) [anchor=west,xshift=1cm]
{
\begin{tikzpicture}
      \draw [cyan,ultra thick] (-2,0) -- (2,0);
      \draw[domain=-1.21:1.21,smooth,variable=\x,red,ultra thick]  plot ({\x},{3*\x*(\x-1)*(\x+1)});
      \node at (-2,-0.35) {$-1$};
      \node at (-0.1,0.35) {$+1$};
      \node at (1,-0.35) {$-1$};
\end{tikzpicture}
};
\draw [->,thick] (a)--(b);
\end{tikzpicture}
\caption{The crossing sign at a non-transverse intersection.}\label{fig:crossing sign}
\end{figure}

Perturbing our hypersurfaces to ensure that they intersect transversely, we may thus call $\sign\deg_p(f_1,\ldots,f_n)$ the {\it crossing sign} of $f_1,\ldots,f_n$ at $p$, and we obtain the following theorem.

\begin{thm}[B\'ezout's Theorem over $\mb{R}$]\label{thm:bezout-over-R}
Let $f_1,\ldots,f_n$ be hypersurfaces in $\mb{P}^n_\mb{R}$, and let $d_i$ be the degree of $f_i$ for each $i$. Assume that $f_1,\ldots,f_n$ have no common components and that $-n-1+\sum_{i=1}^n d_i\equiv 0\mod 2$. Then, summing over the intersection points of $f_1,\ldots,f_n$, there are an equal number of positive and negative crossings of $f_1,\ldots,f_n$.
\end{thm}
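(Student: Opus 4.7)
The plan is to obtain Theorem~\ref{thm:bezout-over-R} as a direct consequence of Theorem~\ref{thm:main-orientable} by applying the signature homomorphism $\sign:\GW(\mb{R})\to\mb{Z}$ to both sides of Equation~\ref{eq:main-orientable}. Since $\sign\langle 1\rangle+\sign\langle-1\rangle=0$, we have $\sign\mb{H}=0$, so the right-hand side vanishes and we are left with
\[
\sum_{\text{points }p}\sign\bigl(\Tr_{\mb{R}(p)/\mb{R}}\langle J(p)\rangle\bigr)=0.
\]
The goal is to interpret the left-hand side as the signed count of crossings of $f_1,\ldots,f_n$.

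The first step is to dispose of non-real intersection points. By Remark~\ref{rem:complex-roots-sign-zero}, every intersection point with residue field $\mb{C}$ contributes signature $0$ to the sum, so only real intersection points matter. For real intersection points at which $f_1,\ldots,f_n$ intersect transversely, Lemma~\ref{lem:deg of transverse} identifies $\deg_p(f_1,\ldots,f_n)$ with the class $\langle J_\ell(p)\rangle\in\GW(\mb{R})$ coming from the Jacobian determinant of the local trivialization. The signature of $\langle J_\ell(p)\rangle$ is the sign of $J_\ell(p)$, which geometrically is the sign of the signed volume of the parallelpiped spanned by the gradient vectors $\{\nabla f_i^{\ell}(p)\}_{i\neq\ell}$; this is exactly the crossing sign at a transverse real intersection.

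To handle non-transverse real intersection points (and to ensure the assumption of transversality in Theorem~\ref{thm:main-orientable} is not a restriction for the final statement), I would invoke Lemma~\ref{lem:non-transverse over R}: perturbing the given section slightly within $H^0(\mb{P}^n_{\mb{R}},\mc{O}_{d_1,\ldots,d_n})$ we can replace each non-transverse real intersection by a finite cluster of transverse real intersections in an open neighborhood, without changing the signature contribution in that neighborhood. Since the locus of sections with only transverse intersections is dense (this follows from Lemma~\ref{lem:euler class-independent} together with Bertini-type genericity), we may assume after perturbation that all real intersections are transverse while the signed count is preserved.

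Combining these observations, the equation $\sum_p\sign\deg_p(f_1,\ldots,f_n)=0$ becomes the statement that the number of positive crossings equals the number of negative crossings among the real transverse intersection points of the (perturbed) hypersurfaces, which is exactly the conclusion of Theorem~\ref{thm:bezout-over-R}. The main subtlety, and the only place careful bookkeeping is required, is verifying that the perturbation in Lemma~\ref{lem:non-transverse over R} can be carried out simultaneously at all non-transverse real intersection points while keeping complex intersections complex (so that their signature contribution remains $0$); this is guaranteed by applying the lemma in disjoint open neighborhoods of each non-transverse point and by the openness of the set of sections whose complex intersection points persist as complex.
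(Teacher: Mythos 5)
Your proposal is correct and follows essentially the same route as the paper: apply $\sign$ to Equation~\ref{eq:main-orientable}, use Remark~\ref{rem:complex-roots-sign-zero} to discard non-real points, identify the signature at transverse real points with the crossing sign via Lemma~\ref{lem:deg of transverse}, and handle non-transverse real points by the local perturbation of Lemma~\ref{lem:non-transverse over R}. The paper does exactly this, defining the crossing sign at non-transverse intersections through that same perturbation argument.
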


\begin{ex}
We can now make sense of the problematic conic and cubic in $\mb{P}^2_\mb{R}$ from Figure~\ref{fig:conic-cubic}. To be specific, let $f_1=x_0^2x_2-x_1^3$ and $f_2=x_1^2+x_2^2-2x_0^2$. The only intersection points of $f_1$ and $f_2$ are $p_1=\pcoor{1:-1:-1}$ and $p_2=\pcoor{1:1:1}$. The crossing sign of $f_1$ and $f_2$ at $p_i$ is given by the right hand rule on the gradient vectors of $f_1^0$ and $f_2^0$ at $\vphi_0(p_i)$. We now demonstrate this calculation. Let $e_i$ be the the unit basis vector along the $(\tfrac{x_i}{x_0})$-axis, and let $e_3:=e_1\times e_2$. Then $\nabla f_1^0=-3(\tfrac{x_1}{x_0})^2\cdot e_1+e_2$ and $\nabla f_2^0=2(\tfrac{x_1}{x_0})\cdot e_1+2(\tfrac{x_2}{x_0})\cdot e_2$, so $\nabla f_1^0\times\nabla f_2^0=(-6(\tfrac{x_1}{x_0})^2(\tfrac{x_2}{x_0})-2(\tfrac{x_1}{x_0}))\cdot e_3$. The crossing sign at $p_i$ is computed by taking the sign of the dot product of $\nabla f_1^0\times\nabla f_2^0(\vphi_0(p_i))$ and $e_3$. This gives us
\begin{align*}
\sign(\nabla f_1^0\times\nabla f_2^0(\vphi(p_1))\cdot e_3)&=\sign(6+2)=1,\\
\sign(\nabla f_1^0\times\nabla f_2^0(\vphi(p_2))\cdot e_3)&=\sign(-6-2)=-1.
\end{align*}
We illustrate this calculation in Figure~\ref{fig:bezout-over-R}.
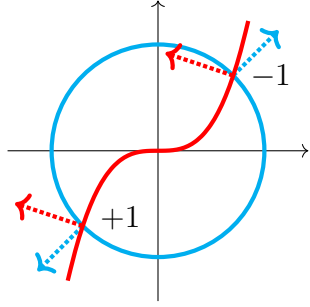
\begin{figure}[h]
\centering
\begin{tikzpicture}
      \draw[->] (-2,0) -- (2,0);
      \draw[->] (0,-2) -- (0,2);
      \draw [cyan,ultra thick] (0,0) circle [radius=1.4142];
      \draw[->,cyan,ultra thick,densely dotted] (-1,-1) -- (-1.6,-1.6);
      \draw[->,cyan,ultra thick,densely dotted] (1,1) -- (1.6,1.6);
      \draw[domain=-1.2:1.2,smooth,variable=\x,red,ultra thick]  plot ({\x},{\x*\x*\x});
      \draw[->,red,ultra thick,densely dotted] (-1,-1) -- (-1.9,-0.7);
      \draw[->,red,ultra thick,densely dotted] (1,1) -- (0.1,1.3);
      \draw (-.5,-.9) node{$+1$};
      \draw (1.5,1) node{$-1$};
\end{tikzpicture}
\caption{A conic and a cubic over $\mb{R}$.}\label{fig:bezout-over-R}
\end{figure}
\end{ex}

\subsection{B\'ezout's Theorem over finite fields}
Let $\mb{F}_q$ be the finite field of order $q$. Over $\mb{F}_q$, non-degenerate symmetric bilinear forms are classified up to isomorphism by their rank and discriminant~\cite[Chapter II, Theorem 3.5 (1)]{Lam05}, so we have an isomorphism $\rank\times\disc:\GW(\mb{F}_q)\xrightarrow{\cong}\mb{Z}\times\mb{F}_q^\times/(\mb{F}_q^\times)^2$. For simplicity, we assume that $\mb{F}_q$ has odd characteristic, so that $q$ is the power of some odd prime. As a result, we have $\mb{F}_q^\times/(\mb{F}_q^\times)^2\cong\mb{Z}/2\mb{Z}$, and we can classify elements of a given rank in $\GW(\mb{F}_q)$ by whether or not their discriminant is a square. Taking the discriminant of Equation~\ref{eq:main-orientable}, we get $\disc e(\mc{O}_{d_1,\ldots,d_n})=(-1)^{d_1\cdots d_n/2}$. This makes sense, since the fraction $\frac{d_1\cdots d_n}{2}$ is an integer by Remark~\ref{rem:rel orient- not all odd}. We note that $\disc e(\mc{O}_{d_1,\ldots,d_n})$ is a perfect square in $\mb{F}_q$ if and only if $\frac{d_1\cdots d_n}{2}$ is even or $q\equiv 1\mod 4$. The left hand side of Equation~\ref{eq:main-orientable} gives us
\[\prod_{\text{points}}\disc\deg_p(f_1,\ldots,f_n).\]

This product is a perfect square in $\mb{F}_q$ if and only if there are an even number of intersection points $p$ such that $\disc\deg_p(f_1,\ldots,f_n)$ is not a square. It thus remains to determine when $\disc\deg_p(f_1,\ldots,f_n)$ is a square in $\mb{F}_q$. Let $\mb{F}_{q^b}$ be the field of definition of a transverse intersection point $p\in U_\ell$. By~\cite[Section II.2]{CP84}, we have that $\disc\Tr_{\mb{F}_{q^b}/\mb{F}_q}\langle J_\ell(p)\rangle=\norm(J_\ell(p))\cdot\disc\Tr_{\mb{F}_{q^b}/\mb{F}_q}\langle 1\rangle$. Since $\norm:\mb{F}_{q^b}^\times\to\mb{F}_q^\times$ is a homomorphism, $\norm$ takes squares in $\mb{F}_{q^b}^\times$ to squares in $\mb{F}_q^\times$. On the other hand, Hilbert's Theorem 90 implies that $\norm:\mb{F}_{q^b}^\times\to\mb{F}_q^\times$ is surjective, so if $\norm(J_\ell(p))$ is a square, then we have $\norm(J_\ell(p))=\norm(y^2)$ for some $y\in\mb{F}_{q^b}^\times$. Hilbert's Theorem 90 also implies that $\norm$ has kernel $\{z^{q-1}:z\in\mb{F}_{q^b}^\times\}$, so there exists some $z\in\mb{F}_{q^b}^\times$ such that $J_\ell(p)=y^2z^{q-1}$. It follows that $J_\ell(p)=(yz^{(q-1)/2})^2$, so $J_\ell(p)$ is a square if and only if $\norm(J_\ell(p))$ is a square. 

\begin{defn}
We will call a transverse intersection point $p$ of $f_1,\ldots,f_n$ a {\it positive} intersection point if $J_\ell(p)$ is a square in $\mb{F}_{q^b}^\times$. We call $p$ a {\it negative} intersection point if $J_\ell(p)$ is not a square in $\mb{F}_{q^b}^\times$.
\end{defn}

To characterize when $\Tr_{\mb{F}_{q^b}/\mb{F}_q}\langle 1\rangle$ is a square, we let $\alpha$ be a primitive element of the extension $\mb{F}_{q^b}/\mb{F}_q$, so that $\{1,\alpha,\ldots,\alpha^{b-1}\}$ is an $\mb{F}_q$-basis for $\mb{F}_{q^b}$. The matrix representing $\Tr_{\mb{F}_{q^b}/\mb{F}_q}\langle 1\rangle$ with respect to this basis is $MM^T$, where
\[M=\begin{pmatrix}
1 & 1 & \cdots & 1\\ 
\alpha & \alpha^q & \cdots & \alpha^{q^{b-1}}\\
\alpha^2 & \alpha^{2q} & \cdots & \alpha^{2q^{b-1}}\\
\vdots & \vdots & \ddots & \vdots\\
\alpha^{b-1} & \alpha^{(b-1)q} & \cdots & \alpha^{(b-1)q^{b-1}}
\end{pmatrix}.\]
Indeed, the $(i,j)^\textsuperscript{th}$ entry of $MM^T$ is
\begin{align*}
\sum_{\ell=0}^{b-1}\alpha^{(i-1)q^\ell}\alpha^{(j-1)q^\ell}&=\sum_{\ell=0}^{b-1}(\alpha^{i+j-2})^{q^\ell}\\
&=\Tr_{\mb{F}_{q^b}/\mb{F}_q}(\alpha^{i+j-2}).
\end{align*}
By definition of the trace form, this is equal to the $(i,j)^\textsuperscript{th}$ entry of the Gram matrix of $\Tr_{\mb{F}_{q^b}/\mb{F}_q}\langle 1\rangle$ with respect to the basis $\{1,\alpha,\ldots,\alpha^{b-1}\}$. Thus we have
\begin{align*}
\disc\Tr_{\mb{F}_{q^b}/\mb{F}_q}\langle 1\rangle&=\det(MM^T)=(\det{M})^2\\
&=\prod_{0\leq i<j\leq b-1}(\alpha^{q^j}-\alpha^{q^i})^2.
\end{align*}
So $\disc\Tr_{\mb{F}_{q^b}/\mb{F}_q}\langle 1\rangle$ is a square in $\mb{F}_q$ if and only if $\delta=\prod_{i<j}(\alpha^{q^j}-\alpha^{q^i})$ is an element of $\mb{F}_q$. Since $\Gal(\mb{F}_{q^b}/\mb{F}_q)$ is cyclic and generated by the Frobenius $F$, we have that $\delta\in\mb{F}_q$ if and only if $F(\delta)=\delta$. But $F(\delta)=\eps\delta$, where $\eps$ is the sign of the permutation $(1\ 2\ \ldots\ b-1)$. We know that $\eps=1$ if $b$ is odd and $\eps=-1$ if $b$ is even, so $\disc\Tr_{\mb{F}_{q^b}/\mb{F}_q}\langle 1\rangle$ is a square in $\mb{F}_q$ if and only if $b$ is odd. Summing all this together, we have
\[\disc\Tr_{\mb{F}_{q^b}/\mb{F}_q}\langle J_\ell(p)\rangle=\begin{cases}
\text{a square} & \text{if }p\text{ is a positive intersection and }b\text{ is odd,}\\
\text{a square} & \text{if }p\text{ is a negative intersection and }b\text{ is even,}\\
\text{a non-square} & \text{if }p\text{ is a positive intersection and }b\text{ is even,}\\
\text{a non-square} & \text{if }p\text{ is a negative intersection and }b\text{ is odd.}
\end{cases}\]

We summarize this information in the following theorem.

\begin{thm}[B\'ezout's Theorem over $\mb{F}_q$]
Let $f_1,\ldots,f_n$ be hypersurfaces in $\mb{P}^n_{\mb{F}_q}$, and let $d_i$ be the degree of $f_i$ for each $i$. Assume that $f_1,\ldots,f_n$ intersect transversely and that $-n-1+\sum_{i=1}^n d_i\equiv 0\mod 2$.
\begin{enumerate}[(a)]
\item If $\frac{d_1\cdots d_n}{2}$ is even or $q\equiv 1\mod 4$, then
\begin{align*}
&\#\ \text{positive intersections with field of definition }\mb{F}_{q^b}\text{ for }b\text{ even}\\
+\ &\#\ \text{negative intersections with field of definition }\mb{F}_{q^b}\text{ for }b\text{ odd}\equiv 0\mod 2.
\end{align*}
\item If $\frac{d_1\cdots d_n}{2}$ is odd and $q\not\equiv 1\mod 4$, then
\begin{align*}
&\#\ \text{positive intersections with field of definition }\mb{F}_{q^b}\text{ for }b\text{ even}\\
+\ &\#\ \text{negative intersections with field of definition }\mb{F}_{q^b}\text{ for }b\text{ odd}\equiv 1\mod 2.
\end{align*}
\end{enumerate}
\end{thm}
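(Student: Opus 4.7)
The plan is to deduce the theorem by applying the discriminant homomorphism $\disc\colon\GW(\mb{F}_q)\to\mb{F}_q^\times/(\mb{F}_q^\times)^2$ to both sides of Equation~\ref{eq:main-orientable} and then reading off the parities from the casework that precedes the theorem statement. Recall that the discriminant is multiplicative, so it sends the sum $\sum_p \Tr_{k(p)/\mb{F}_q}\langle J(p)\rangle$ in $\GW(\mb{F}_q)$ to the product $\prod_p \disc\Tr_{k(p)/\mb{F}_q}\langle J(p)\rangle$ in $\mb{F}_q^\times/(\mb{F}_q^\times)^2$, and sends $\frac{d_1\cdots d_n}{2}\cdot\mb{H}$ to $(-1)^{d_1\cdots d_n/2}$.

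First I would record the elementary fact about when $(-1)^{d_1\cdots d_n/2}$ is a square in $\mb{F}_q^\times$: this is automatic when $\frac{d_1\cdots d_n}{2}$ is even, and it holds exactly when $q\equiv 1\bmod 4$ when $\frac{d_1\cdots d_n}{2}$ is odd (using that $-1\in(\mb{F}_q^\times)^2$ iff $q\equiv 1\bmod 4$). So case (a) corresponds to the RHS discriminant being a square and case (b) to the RHS discriminant being a non-square.

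Next I would invoke the classification of $\disc\Tr_{\mb{F}_{q^b}/\mb{F}_q}\langle J_\ell(p)\rangle$ already derived in the paragraphs preceding the theorem, namely that this discriminant is a non-square in $\mb{F}_q^\times$ precisely when $p$ is a positive intersection with $b$ even or a negative intersection with $b$ odd, and a square in the two complementary cases. (The two ingredients that produce this table are: the identity $\disc\Tr\langle a\rangle=\norm(a)\cdot\disc\Tr\langle 1\rangle$, together with the Hilbert 90 argument showing $\norm(J_\ell(p))\in(\mb{F}_q^\times)^2$ iff $J_\ell(p)\in(\mb{F}_{q^b}^\times)^2$, and the Vandermonde computation showing $\disc\Tr_{\mb{F}_{q^b}/\mb{F}_q}\langle 1\rangle$ is a square iff $b$ is odd.)

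Finally, the product $\prod_p\disc\Tr_{k(p)/\mb{F}_q}\langle J(p)\rangle$ in the two-element group $\mb{F}_q^\times/(\mb{F}_q^\times)^2$ is a square if and only if an even number of its factors are non-squares. Matching this parity to the RHS in cases (a) and (b) and invoking the classification in the previous step converts the equation of discriminants into the desired congruence on the number of (positive, $b$ even) plus (negative, $b$ odd) intersection points. There is no real obstacle here beyond keeping the bookkeeping straight; the heavy lifting has already been done in deriving the discriminant formulas for $\Tr_{\mb{F}_{q^b}/\mb{F}_q}\langle J_\ell(p)\rangle$ and in establishing Theorem~\ref{thm:main-orientable}.
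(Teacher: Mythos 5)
Your proposal is correct and follows essentially the same route as the paper: apply $\disc$ to Equation~\ref{eq:main-orientable}, note that $(-1)^{d_1\cdots d_n/2}$ is a square exactly in case (a), and use the paper's classification of $\disc\Tr_{\mb{F}_{q^b}/\mb{F}_q}\langle J_\ell(p)\rangle$ (via the norm identity, Hilbert 90, and the Vandermonde computation of $\disc\Tr_{\mb{F}_{q^b}/\mb{F}_q}\langle 1\rangle$) to turn the square/non-square condition on the product into the stated parity count. The theorem is indeed stated in the paper as the summary of precisely this bookkeeping.
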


\subsection{B\'ezout's Theorem over $\mb{C}((t))$}
We begin by describing $\GW(\mb{C}((t)))$. The field $\mb{C}((t))$ of Laurent series consists of elements of the form $g=\sum_{i=m}^\infty a_it^i$, where $m\in\mb{Z}$ and $a_m\neq 0$, and of the element 0. With the valuation $v(g)=m$, the pair $(\mb{C}((t)),v)$ is a {\it complete discretely valuated field} (see e.g.~\cite[Chapter VI, Section 1]{Lam05}). By slight abuse of terminology, we call the non-zero elements of $\mb{C}((t))$ with $v(g)=0$ {\it units}. Units in $\mb{C}((t))$ are of the form $g=\sum_{i=0}^\infty a_it^i$. Fixing $t$ as our uniformizer, every non-zero element of $\mb{C}((t))$ can be written as $g=ut^{v(g)}$ for some unit $u$. By relation (i) of Section~\ref{sec:gw}, it follows that $\langle g\rangle=\langle u\rangle$ if $v(g)$ is even and $\langle g\rangle=\langle ut\rangle$ if $v(g)$ is odd. By~\cite[Chapter VI, Lemma 1.1]{Lam05}, a unit $u$ is a square in $\mb{C}((t))$ if and only if $u(0)=a_0$ is a square in $\mb{C}$. Since $\mb{C}$ is algebraically closed, we have $\langle g\rangle=\langle 1\rangle$ if $v(g)$ is even and $\langle g\rangle=\langle t\rangle$ if $v(g)$ is odd. It follows that $\GW(\mb{C}((t)))$ is generated by $\langle 1\rangle$ and $\langle t\rangle$. Since $\langle 1\rangle=\langle -1\rangle$ and $\langle t\rangle=\langle -t\rangle$, we have that $2\langle 1\rangle=2\langle t\rangle=\mb{H}$. We thus get a well-defined group isomorphism
\[\GW(\mb{C}((t)))\cong\frac{\mb{Z}[\langle t\rangle]}{(\langle t\rangle^2-1,2\langle t\rangle-2)}\cong\mb{Z}\times\mb{Z}/2\mb{Z}.\]
We would like to realize the isomorphism $\GW(\mb{C}((t)))\cong\mb{Z}\times\mb{Z}/2\mb{Z}$ as a map of the form $\rank\times\op{invariant}$. 

\begin{prop}
Let $\rank(m\langle 1\rangle+n\langle t\rangle)=m+n$, and let $\disc(m\langle 1\rangle+n\langle t\rangle)=n\mod 2$. Then $\rank\times\disc:\GW(\mb{C}((t)))\xrightarrow{\cong}\mb{Z}\times\mb{Z}/2\mb{Z}$ is an isomorphism of abelian groups.
\end{prop}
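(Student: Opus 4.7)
My plan is to leverage the isomorphism $\GW(\mb{C}((t)))\cong\mb{Z}[\langle t\rangle]/(\langle t\rangle^2-1,2\langle t\rangle-2)$ that the paper has just established, and then verify three things for the proposed map $\rank\times\disc$: well-definedness, homomorphism property, and bijectivity.

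First I would address well-definedness, which is where the only subtlety lies. Since $2\langle 1\rangle=2\langle t\rangle$ in $\GW(\mb{C}((t)))$, the expression $m\langle 1\rangle+n\langle t\rangle$ does not uniquely determine $m$ and $n$: one may replace $(m,n)$ by $(m+2k,n-2k)$ for any $k\in\mb{Z}$ without changing the underlying class. I would check that both $\rank$ and $\disc$ are invariant under this substitution. For $\rank$, the sum $(m+2k)+(n-2k)=m+n$ is clearly unchanged. For $\disc$, the residue $(n-2k)\bmod 2=n\bmod 2$ is also unchanged. Thus both maps descend to well-defined functions on $\GW(\mb{C}((t)))$.

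Next I would verify that $\rank\times\disc$ is a homomorphism of abelian groups. Writing $a=m_1\langle 1\rangle+n_1\langle t\rangle$ and $b=m_2\langle 1\rangle+n_2\langle t\rangle$, we have $a+b=(m_1+m_2)\langle 1\rangle+(n_1+n_2)\langle t\rangle$, so $\rank(a+b)=\rank(a)+\rank(b)$ and $\disc(a+b)\equiv\disc(a)+\disc(b)\pmod{2}$. Both maps are therefore group homomorphisms, and so is their product.

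Finally, I would check bijectivity directly. For surjectivity, given $(r,d)\in\mb{Z}\times\mb{Z}/2\mb{Z}$, the class $(r-d)\langle 1\rangle+d\langle t\rangle$ (with $d\in\{0,1\}$) maps to $(r,d)$. For injectivity, suppose $m\langle 1\rangle+n\langle t\rangle$ lies in the kernel. Then $m+n=0$ and $n$ is even, say $n=2k$, so $m=-2k$. Using the relation $2\langle t\rangle=2\langle 1\rangle$, we obtain
\[
m\langle 1\rangle+n\langle t\rangle=-2k\langle 1\rangle+2k\langle t\rangle=k\bigl(2\langle t\rangle-2\langle 1\rangle\bigr)=0,
\]
so the kernel is trivial. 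The only step requiring real care is well-definedness; once that is settled, the rest is a direct consequence of the presentation of $\GW(\mb{C}((t)))$ already derived.
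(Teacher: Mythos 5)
Your proof is correct and follows essentially the same route as the paper: check that $\rank\times\disc$ is a well-defined homomorphism, exhibit explicit preimages for surjectivity, and compute the kernel using $2\langle 1\rangle=2\langle t\rangle$ for injectivity. The only difference is that you spell out the well-definedness check that the paper dismisses as "readily checked," which is a reasonable addition but not a different argument.
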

\begin{proof}
It can readily be checked that $\rank\times\disc$ is a well-defined group homomorphism. Moreover, $\rank\times\disc$ is surjective, since $\rank\times\disc(m\langle 1\rangle)=(m,0)$ and $\rank\times\disc((m-1)\langle 1\rangle+\langle t\rangle)=(m,1)$ for all $m\in\mb{Z}$. Finally, $rank\times\disc$ is injective. Indeed, if $\rank\times\disc(m\langle 1\rangle+n\langle t\rangle)=(0,0)$, then $m+n=0$ and $n\equiv 0\mod 2$. Thus $m=-n=-2s$ for some $s\in\mb{Z}$, so $m\langle 1\rangle+n\langle t\rangle=s(2\langle t\rangle-2\langle 1\rangle)=0$.
\end{proof}

\begin{rem}
The homomorphism $\disc:\GW(\mb{C}((t)))\to\mb{Z}/2\mb{Z}$ is the traditional discriminant, simply written additively.
\end{rem}

We may now obtain B\'ezout's Theorem over $\mb{C}((t))$ by applying $\disc$ to Equation~\ref{eq:main-orientable}. Since $\mb{H}=2\cdot\langle 1\rangle=2\cdot\langle t\rangle$, we have $\disc\mb{H}=0$ and hence $\disc{e(\mc{O}_{d_1,\ldots,d_n})}=0$. This will be equal to $\disc(\sum\deg_p(f_1,\ldots,f_n))=\sum\disc\deg_p(f_1,\ldots,f_n)$, so we need to understand $\disc\deg_p(f_1,\ldots,f_n)$. Any degree $m$ extension of $\mb{C}((t))$ is a cyclic Galois extension of the form $\mb{C}((t^{1/m}))$~\cite[XIII.2, p. 191]{Ser79}. By our previous discussion, $\GW(\mb{C}((t^{1/m})))$ is generated by $\langle 1\rangle$ and $\langle t^{1/m}\rangle$, so at transverse intersection points, $\deg_p(f_1,\ldots,f_n)$ is either of the form $\Tr_{\mb{C}((t^{1/m}))/\mb{C}((t))}\langle 1\rangle$ or $\Tr_{\mb{C}((t^{1/m}))/\mb{C}((t))}\langle t^{1/m}\rangle$.

\begin{defn}
In analogy with the finite field case, we call a transverse intersection point $p$ of $f_1,\ldots,f_n$ a {\it positive} intersection point if $\deg_p(f_1,\ldots,f_n)=\Tr_{\mb{C}((t^{1/m}))/\mb{C}((t))}\langle 1\rangle$. We call $p$ a {\it negative} intersection point if $\deg_p(f_1,\ldots,f_n)=\Tr_{\mb{C}((t^{1/m}))/\mb{C}((t))}\langle t^{1/m}\rangle$.
\end{defn}

\begin{lem}
If $m$ is a positive integer, then we have $\disc\Tr_{\mb{C}((t^{1/m}))/\mb{C}((t))}\langle 1\rangle\equiv m-1\mod 2$ and $\disc\Tr_{\mb{C}((t^{1/m}))/\mb{C}((t))}\langle t^{1/m}\rangle\equiv m\mod 2$.
\end{lem}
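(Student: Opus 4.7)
The plan is to compute the discriminant directly by writing down the Gram matrix of each trace form in the power basis $\{1, t^{1/m}, t^{2/m}, \ldots, t^{(m-1)/m}\}$ of $\mb{C}((t^{1/m}))$ over $\mb{C}((t))$ and then taking the determinant. The key computational input is the explicit formula for the trace of a pure monomial, which comes from the cyclic Galois structure.

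First I would observe that $\Gal(\mb{C}((t^{1/m}))/\mb{C}((t)))$ acts by $t^{1/m}\mapsto\zeta t^{1/m}$ as $\zeta$ ranges over $m$-th roots of unity. Consequently
\[
\Tr_{\mb{C}((t^{1/m}))/\mb{C}((t))}(t^{k/m})=t^{k/m}\sum_{\zeta^m=1}\zeta^k=\begin{cases}m\cdot t^{k/m} & m\mid k,\\ 0 & \text{otherwise,}\end{cases}
\]
which is the only nontrivial ingredient.

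Next I would assemble the two Gram matrices. For $\Tr\langle 1\rangle$, the $(i,j)$-entry (with $0\leq i,j\leq m-1$) is $\Tr(t^{(i+j)/m})$, which is nonzero only when $i+j\in\{0,m\}$. This gives a block matrix consisting of a $1\times 1$ block $(m)$ together with an $(m-1)\times(m-1)$ anti-diagonal block whose entries are all equal to $mt$; its determinant is $m\cdot(mt)^{m-1}\cdot(-1)^{(m-1)(m-2)/2}$. For $\Tr\langle t^{1/m}\rangle$, the $(i,j)$-entry is $\Tr(t^{(i+j+1)/m})$, which is nonzero only when $i+j+1=m$ (the case $i+j+1=2m$ is impossible in our range), yielding an $m\times m$ anti-diagonal matrix with all entries $mt$, whose determinant is $(mt)^m\cdot(-1)^{m(m-1)/2}$.

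Finally, reduce modulo squares in $\mb{C}((t))^\times$. Any sign $\pm 1$ and any nonzero scalar like $m^m$ is a unit whose constant term is a square in $\mb{C}$, hence is itself a square in $\mb{C}((t))$ by the criterion cited earlier from~\cite[Chapter VI, Lemma 1.1]{Lam05}. So the determinants represent the classes of $t^{m-1}$ and $t^m$ in $\mb{C}((t))^\times/(\mb{C}((t))^\times)^2\cong\mb{Z}/2\mb{Z}$, which translate under the additive convention for $\disc$ into $m-1\bmod 2$ and $m\bmod 2$, respectively. No step looks genuinely difficult; the only place to be careful is checking that the second case really has no contribution from $i+j+1=2m$, which fails because $i,j\leq m-1$ forces $i+j+1\leq 2m-1$.
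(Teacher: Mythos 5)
Your proposal is correct and follows essentially the same route as the paper: both compute the Gram matrix of each trace form in the power basis $\{1,t^{1/m},\ldots,t^{(m-1)/m}\}$ using the cyclic Galois action to evaluate $\Tr(t^{k/m})$, and both read off the discriminant from the resulting (anti-)diagonal structure. The only cosmetic difference is that the paper identifies the forms in $\GW(\mb{C}((t)))$ as $(m-1)\langle t\rangle+\langle 1\rangle$ and $m\langle t\rangle$ before applying $\disc$, while you take the determinant directly and reduce modulo squares, which amounts to the same thing.
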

\begin{proof}
Mirroring the case of finite fields, we let $t^{1/m}$ be our primitive element. The Galois group of $\mb{C}((t^{1/m}))$ over $\mb{C}((t))$ is generated by $\vphi:t^{1/m}\mapsto\zeta t^{1/m}$, where $\zeta=e^{2\pi i/m}$ is a primitive $m$\textsuperscript{th} root of unity. We have the $\mb{C}((t))$-basis $\{1,t^{1/m},\ldots,t^{(m-1)/m}\}$ for $\mb{C}((t^{1/m}))$. The Gram matrix for $\Tr_{\mb{C}((t^{1/m}))/\mb{C}((t))}\langle u\rangle$ with respect to this basis is given by the product $AB$, where $a_{ij}=\vphi^{j-1}(t^{(i-1)/m})$ and $b_{ij}=\vphi^{i-1}(ut^{(j-1)/m})$ are the entries in the $i$\textsuperscript{th} row and $j$\textsuperscript{th} column of $A$ and $B$, respectively. When $u=1$, this product of matrices has entries
\[c_{ij}=t^{(i+j-2)/m}\sum_{\ell=0}^{m-1}\zeta^{\ell(i+j-2)}=\begin{cases}0 & m\nmid i+j-2,\\ mt^{(i+j-2)/m} & m\mid i+j-2.\end{cases}\]

As a result, we have
\begin{align*}
\Tr_{\mb{C}((t^{1/m}))/\mb{C}((t))}\langle 1\rangle=\begin{pmatrix}m & 0 & \cdots & 0\\ 0 & 0 & \cdots & mt\\ \vdots & \vdots & \reflectbox{$\ddots$} & \vdots\\ 0 & mt & \cdots & 0\end{pmatrix}=(m-1)\cdot\langle t\rangle+\langle 1\rangle.
\end{align*}

Thus $\disc\Tr_{\mb{C}((t^{1/m}))/\mb{C}((t))}\langle 1\rangle\equiv m-1\mod 2$. When $u=t^{1/m}$, the product $AB$ has entries
\[c_{ij}=t^{(i+j-1)/m}\sum_{\ell=0}^{m-1}\zeta^{\ell(i+j-1)}=\begin{cases}0 & m\nmid i+j-1,\\ mt^{(i+j-1)/m} & m\mid i+j-1.\end{cases}\]

As a result, we have
\begin{align*}
\Tr_{\mb{C}((t^{1/m}))/\mb{C}((t))}\langle t^{1/m}\rangle=\begin{pmatrix}0 & \cdots & 0 & mt\\ 0 & \cdots & mt & 0\\ \vdots & \reflectbox{$\ddots$} & \vdots & \vdots\\ mt & \cdots & 0 & 0\end{pmatrix}=m\cdot\langle t\rangle.
\end{align*}

Thus $\disc\Tr_{\mb{C}((t^{1/m}))/\mb{C}((t))}\langle t^{1/m}\rangle\equiv m\mod 2$.
\end{proof}

As a result, we have proved the following theorem.

\begin{thm}[B\'ezout's Theorem over $\mb{C}((t))$]
Let $f_1,\ldots,f_n$ be hypersurfaces in $\mb{P}^n_{\mb{C}((t))}$, and let $d_i$ be the degree of $f_i$ for each $i$. Assume that $f_1,\ldots,f_n$ intersect transversely and that $-n-1+\sum_{i=1}^n d_i\equiv 0\mod 2$. Then
\begin{align*}
&\#\ \text{positive intersections with field of definition }\mb{C}((t^{1/m}))\text{ for }m\text{ even}\\
+\ &\#\ \text{negative intersections with field of definition }\mb{C}((t^{1/m}))\text{ for }m\text{ odd}\equiv 0\mod 2.
\end{align*}
\end{thm}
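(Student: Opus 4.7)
The plan is to apply the discriminant homomorphism $\disc:\GW(\mb{C}((t)))\to\mb{Z}/2\mb{Z}$ to both sides of Equation~\ref{eq:main-orientable}, exactly mirroring the strategy used in Sections~\ref{sec:over R} and for finite fields. Since $\mb{H}=2\langle 1\rangle$ and $\disc$ is additive with $\disc\langle 1\rangle=0$, the right-hand side $\frac{d_1\cdots d_n}{2}\cdot\mb{H}$ has discriminant $0$. It therefore suffices to compute $\sum_{\text{points}}\disc\deg_p(f_1,\ldots,f_n)$ and to set this sum equal to $0$ modulo $2$.

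Next I would use the classification of finite extensions of $\mb{C}((t))$ already recalled: every residue field $k(p)$ is of the form $\mb{C}((t^{1/m}))$ for some $m\geq 1$. Because the intersection at $p$ is transverse, Lemma~\ref{lem:deg of transverse} gives
\[\deg_p(f_1,\ldots,f_n)=\Tr_{k(p)/\mb{C}((t))}\langle J_\ell(p)\rangle,\]
and since $\GW(\mb{C}((t^{1/m})))$ is generated by $\langle 1\rangle$ and $\langle t^{1/m}\rangle$ with $\langle g\rangle$ depending only on the parity of $v(g)$, the class $\langle J_\ell(p)\rangle$ is either $\langle 1\rangle$ (the positive case) or $\langle t^{1/m}\rangle$ (the negative case). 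This matches precisely the dichotomy in the definition of positive/negative intersection points preceding the theorem.

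The previous lemma computes the two discriminants we need: $\disc\Tr_{\mb{C}((t^{1/m}))/\mb{C}((t))}\langle 1\rangle\equiv m-1\pmod{2}$ and $\disc\Tr_{\mb{C}((t^{1/m}))/\mb{C}((t))}\langle t^{1/m}\rangle\equiv m\pmod{2}$. Case-checking on the parity of $m$ gives a contribution of $1\bmod 2$ exactly when $p$ is positive with $m$ even, or $p$ is negative with $m$ odd, and a contribution of $0$ otherwise. Summing over all intersection points and equating with $0$ yields the desired congruence.

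I do not expect a serious obstacle: every ingredient (the additivity of $\disc$, the structure of $\GW(\mb{C}((t)))$, the classification of finite extensions of $\mb{C}((t))$, the local degree formula at a transverse zero, and the discriminant computations of the trace forms) is already established in the excerpt. The only care needed is bookkeeping the parity table correctly so that the four cases assemble into the two classes of intersection points actually appearing in the theorem. Once that table is written down, the proof is essentially a one-line application of $\disc$ to Theorem~\ref{thm:main-orientable}.
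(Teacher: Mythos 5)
Your proposal is correct and follows essentially the same route as the paper: apply $\disc$ to Equation~\ref{eq:main-orientable}, note $\disc\mb{H}=0$, identify each transverse local degree as $\Tr_{\mb{C}((t^{1/m}))/\mb{C}((t))}\langle 1\rangle$ or $\Tr_{\mb{C}((t^{1/m}))/\mb{C}((t))}\langle t^{1/m}\rangle$ using the structure of $\GW(\mb{C}((t^{1/m})))$ and the classification of finite extensions of $\mb{C}((t))$, and then invoke the discriminant computations $m-1$ and $m \bmod 2$ from the preceding lemma. The parity bookkeeping you describe is exactly how the paper assembles the final congruence, so there is nothing to add.
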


\subsection{B\'ezout's Theorem over $\mb{Q}$}
In contrast with the previous fields we have considered, we need several invariants to understand $\GW(\mb{Q})$. Letting $\W(\mb{Q})$ denote the Witt ring of $\mb{Q}$, we have
\[\begin{tikzcd}
\GW(\mb{Q}) \arrow[r,"\rank"] \arrow[d,swap,"\mr{mod}\ \mb{H}"] & \mb{Z} \arrow[d,"\mr{mod}\ 2"]\\ \W(\mb{Q})\arrow[r] &\mb{Z}/2\mb{Z}.
\end{tikzcd}\]
This gives us an isomorphism $\rank\times\ \mathrm{mod}\ \mb{H}:\GW(\mb{Q})\xrightarrow{\cong}\mb{Z}\times_{\mb{Z}/2\mb{Z}}\W(\mb{Q})$. In order to obtain B\'ezout's Theorem over $\mb{Q}$, it thus suffices to describe $\W(\mb{Q})$. By the weak Hasse-Minkowski principle (see e.g.~\cite[Chapter VI, Section 4, (4.4)]{Lam05}), we have an isomorphism
\[\partial:=\sign\times\ \partial_2\times\oplus\partial_p:\W(\mb{Q})\xrightarrow{\cong}\mb{Z}\times\mb{Z}/2\mb{Z}\times\bigoplus_{p\neq 2}\W(\mb{F}_p).\]
When $p\equiv 3\mod 4$, we have $\W(\mb{F}_p)\cong\mb{Z}/4\mb{Z}$, generated by $\langle 1\rangle$. When $p\equiv 1\mod 4$, the Witt ring $\W(\mb{F}_p)$ is isomorphic to the group ring $(\mb{Z}/2\mb{Z})[\mb{F}_p^\times/(\mb{F}_p^\times)^2]$, whose underlying group structure is isomorphic to $\mb{Z}/2\mb{Z}\times\mb{Z}/2\mb{Z}$, generated by $\langle 1\rangle$ and $\langle r\rangle$, where $r$ is a non-square in $\mb{F}_p^\times$. The invariant $\partial_2$ is given by $\partial_2(\beta)=v_2(\disc\beta)\mod 2$, where $v_2$ is the 2-adic valuation. For any odd prime $p$, any element of $\mb{Q}$ may be written as $q=up^{v_p(q)}$, where $v_p$ is the $p$-adic valuation and $v_p(u)=0$. It follows that $\langle q\rangle=\langle u\rangle$ if $v_p(q)$ is even and $\langle q\rangle=\langle up\rangle$ if $v_p(q)$ is odd. We define $\partial_p$ by setting $\partial_p\langle u\rangle=0$ for any $p$-adic unit $u$ and $\partial_p\langle up\rangle=\langle\bar{u}\rangle$, where $\bar{u}$ is the image of $u$ under the composition $\mb{Q}\inj\mb{Q}_p\to\mb{F}_p$. Explicitly, if $q$ is a non-zero rational number with $v_p(q)$ odd, then we write $q=\frac{a}{b}\cdot p^{v_p(q)}$ with $a$ and $b$ prime to $p$. Since $\langle q\rangle=\langle\frac{a}{b}\cdot p\rangle$ in $\W(\mb{Q})$, our definition for $\partial_p$ gives us $\partial_p\langle q\rangle=\langle(a\mod p)(b\mod p)^{-1}\rangle$ in $\W(\mb{F}_p)$. We obtain B\'ezout's Theorem over $\mb{Q}$ by applying $\partial\circ\mr{mod}\ \mb{H}$ to both sides of Equation~\ref{eq:main-orientable}. Since $e(\mc{O}_{d_1,\ldots,d_n})\in\GW(\mb{Q})$ is a multiple of $\mb{H}$, it has trivial image in $\W(\mb{Q})$. Thus the right hand side of Equation~\ref{eq:main-orientable} becomes $(0,0,0,\ldots)$, while the left hand side becomes $\sum_\text{points}\partial(\deg_x(f_1,\ldots,f_n)\mod\mb{H})$. In summary, we have the following theorem.

\begin{thm}[B\'ezout's Theorem over $\mb{Q}$]\label{thm:over Q}
Let $f_1,\ldots,f_n$ be hypersurfaces in $\mb{P}^n_{\mb{Q}}$, and let $d_i$ be the degree of $f_i$ for each $i$. Assume that $f_1,\ldots,f_n$ intersect transversely and that $-n-1+\sum_{i=1}^n d_i\equiv 0\mod 2$. Then we have the following statements.
\begin{enumerate}[(a)]
\item We have $\sum_x\sign\deg_x(f_1,\ldots,f_n)=0$.
\item We have $\sum_x\partial_2\deg_x(f_1,\ldots,f_n)\equiv 0\mod 2$.
\item For each prime $p\equiv 3\mod 4$, we have $\sum_x\partial_p\deg_x(f_1,\ldots,f_n)\equiv 0\mod 4$. (Here, we identify $\partial_p\deg_x(f_1,\ldots,f_n)$ with its image in $\mb{Z}/4\mb{Z}$.)
\item For each prime $p\equiv 1\mod 4$, we have $\sum_x\partial_p\deg_x(f_1,\ldots,f_n)\equiv(0,0)\mod(2,2)$. (Here, we identify $\partial_p\deg_x(f_1,\ldots,f_n)$ with its image in $\mb{Z}/2\mb{Z}\times\mb{Z}/2\mb{Z}$.)
\end{enumerate}
\end{thm}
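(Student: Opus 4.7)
The plan is to apply the composition $\partial\circ(\mr{mod}\ \mb{H})$ to both sides of Equation~\ref{eq:main-orientable} and then read off the four component equations corresponding to statements (a)-(d). The hard conceptual work has already been packaged into Theorem~\ref{thm:main-orientable} and the weak Hasse-Minkowski decomposition of $\W(\mb{Q})$, so the proof amounts to bookkeeping.

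First I observe that the right-hand side $\frac{d_1\cdots d_n}{2}\cdot\mb{H}$ of Equation~\ref{eq:main-orientable} is an integer multiple of $\mb{H}$ and therefore lies in the kernel of the reduction $\GW(\mb{Q})\to\W(\mb{Q})$. By Theorem~\ref{thm:main-orientable}, we consequently obtain the identity
\[
\sum_{x}\bigl(\Tr_{\mb{Q}(x)/\mb{Q}}\langle J(x)\rangle\ \mr{mod}\ \mb{H}\bigr)=0
\]
in $\W(\mb{Q})$, where $x$ ranges over the transverse intersection points of $f_1,\ldots,f_n$. Applying the isomorphism
\[
\partial=\sign\times\partial_2\times\bigoplus_{p\neq 2}\partial_p\colon\W(\mb{Q})\xrightarrow{\cong}\mb{Z}\times\mb{Z}/2\mb{Z}\times\bigoplus_{p\neq 2}\W(\mb{F}_p)
\]
to this identity yields a single equality of tuples whose four coordinates are exactly assertions (a)-(d).

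More concretely, the $\sign$-coordinate yields (a), the $\partial_2$-coordinate yields (b), and the $\partial_p$-coordinates for odd primes yield (c) and (d) after the appropriate identifications of $\W(\mb{F}_p)$: for $p\equiv 3\mod 4$, I identify $\W(\mb{F}_p)\cong\mb{Z}/4\mb{Z}$ via the generator $\langle 1\rangle$; for $p\equiv 1\mod 4$, I identify $\W(\mb{F}_p)\cong\mb{Z}/2\mb{Z}\times\mb{Z}/2\mb{Z}$ via the generators $\langle 1\rangle,\langle r\rangle$, where $r$ is a non-square in $\mb{F}_p^\times$. There is no genuine obstacle in this argument: it is a formal corollary of Theorem~\ref{thm:main-orientable} together with the structural description of $\W(\mb{Q})$ already recalled immediately before the theorem. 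The only point requiring care is the tracking of these $\W(\mb{F}_p)$-identifications, but they are standard and have already been set up in the preceding discussion.
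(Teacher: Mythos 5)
Your proposal is correct and matches the paper's argument essentially verbatim: the paper likewise applies $\partial\circ(\mathrm{mod}\ \mb{H})$ to Equation~\ref{eq:main-orientable}, notes that $e(\mc{O}_{d_1,\ldots,d_n})=\frac{d_1\cdots d_n}{2}\cdot\mb{H}$ dies in $\W(\mb{Q})$, and reads off statements (a)--(d) from the coordinates of $\partial=\sign\times\partial_2\times\oplus_{p\neq 2}\partial_p$ using the same identifications of $\W(\mb{F}_p)$ for $p\equiv 3$ and $p\equiv 1\bmod 4$.
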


\begin{rem}
When $x\in U_\ell$ is a rational intersection point of $f_1,\ldots,f_n$, then the local degree $\deg_x(f_1,\ldots,f_n)=\langle J_\ell(x)\rangle$, where $J_\ell(x)\in\mb{Q}$ is the signed volume of the parallelpiped spanned by the gradients of $f_1,\ldots,f_n$ at $x$. Our previous discussion allows us to compute $\partial\langle J_\ell(x)\rangle$, so it remains to consider the case when $x$ is a non-rational intersection point of $f_1,\ldots,f_n$. When $J_\ell(x)$ is a square in the residue field $\mb{Q}(x)$ of $x$, then $\deg_x(f_1,\ldots,f_n)=\Tr_{\mb{Q}(x)/\mb{Q}}\langle 1\rangle$ is the {\it trace form} of the field extension $\mb{Q}(x)/\mb{Q}$. Trace forms of algebraic number fields have been studied extensively. Bayer-Fluckiger and Lenstra~\cite{BL90} showed that if $\mb{Q}(x)/\mb{Q}$ is an odd degree field extension, then  $\Tr_{\mb{Q}(x)/\mb{Q}}\langle 1\rangle=[\mb{Q}(x):\mb{Q}]\cdot\langle 1\rangle$. If $\mb{Q}(x)/\mb{Q}$ is an even degree extension and no Sylow 2-subgroups of $\Gal(\mb{Q}(x)/\mb{Q})$ are metacyclic, then one can use the Knebusch exact sequence of Witt rings to show that $\Tr_{\mb{Q}(x)/\mb{Q}}\langle 1\rangle=[\mb{Q}(x):\mb{Q}]\cdot\langle 1\rangle$ if $\mb{Q}(x)$ is totally real and $\Tr_{\mb{Q}(x)/\mb{Q}}\langle 1\rangle=\frac{[\mb{Q}(x):\mb{Q}]}{2}\cdot\mb{H}$ if $\mb{Q}(x)$ is totally imaginary~\cite{CNC17}. When $J_\ell(x)$ is not a square, we remark that the discriminant can be computed by
\[\disc\Tr_{\mb{Q}(x)/\mb{Q}}\langle J_\ell(x)\rangle=\norm(J_\ell(x))\cdot D,\]
where $D=\disc\Tr_{\mb{Q}(x)/\mb{Q}}\langle 1\rangle$ is the discriminant (up to squares) of the number field $\mb{Q}(x)/\mb{Q}$.
\end{rem}

As an application of Theorem~\ref{thm:over Q}, we discuss intersections of a line and a conic in $\mb{P}^2_\mb{Q}$.

\begin{ex}\label{ex:over Q}
Let $f$ be a line and $g$ be a conic in $\mb{P}^2_\mb{Q}$. If $f$ and $g$ intersect with multiplicity 2 at a rational point $s$, then $\deg_s(f,g)=\mb{H}$. If $f$ and $g$ intersect at a non-rational point $s$, then $i_s(f,g)\geq[\mb{Q}(s):\mb{Q}]>1$ by Proposition~\ref{prop:rank=mult}. Hence $i_s(f,g)=2$ by the classical version of B\'ezout's Theorem, so $s$ must have a quadratic residue field. Thus $f$ and $g$ intersect transversely at $s$ by Proposition~\ref{prop:transverse=rank1}, and we have $\deg_s(f,g)=\Tr_{\mb{Q}(s)/\mb{Q}}\langle J(s)\rangle=\mb{H}$. This restricts the possible values of $J(s)$. For example, since $\disc\mb{H}=-1$, the Hasse--Minkowski principle implies that the discriminant of $\Tr_{\mb{Q}(s)/\mb{Q}}\langle J(s)\rangle$ must also be equal to $-1$ up to squares. If $D$ is the field discriminant (up to squares) of $\mb{Q}(s)/\mb{Q}$, then we have $\mb{Q}(s)\cong\mb{Q}(\sqrt{D})$. We may thus write $J(s)=a+b\sqrt{D}$, and we have $\disc\Tr_{\mb{Q}(s)/\mb{Q}}\langle J(s)\rangle=4D(a^2-b^2D)=D(a^2-b^2D)$ in $\mb{Q}^\times/(\mb{Q}^\times)^2$. This implies that, up to squares, we have $D(a^2-b^2D)+1=0$, so there is a forced relationship between $J(s)$ and the residue field of $s$. If $\mb{Q}(s)\cong\mb{Q}(i)$, for example, then we have $a^2+b^2=1$ up to squares in $\mb{Q}^\times$, so $a^2+b^2=\norm(J(s))$ must be a square in $\mb{Q}^\times$.

Now assume that $f$ and $g$ intersect at two distinct points $s,t$. By B\'ezout's Theorem, we know that $i_s(f,g)=i_t(f,g)=1$, so $f$ and $g$ intersect transversely at each of these $\mb{Q}$-rational points. Let $J(s)$ (respectively $J(t)$) denote the area of the parallelogram determined by the normal vectors of $f$ and $g$ at $s$ (respectively $t$). Theorem~\ref{thm:over Q} places various restrictions on the possible values of $J(s)$ and $J(t)$. In particular, $J(s)$ and $J(t)$ must have opposite signs and their dyadic valuations must agree mod 2. The local residues of $\langle J(s)\rangle$ and $\langle J(t)\rangle$ at odd primes also constrain the possible intersection types of $f$ and $g$. For example, it is impossible to have $J(s)$ be a non-square integer (other than $-1$) and $J(t)$ any integer prime to $J(s)$. Indeed, assume that $J(s)\neq -1$ is a non-square integer and $J(t)$ is an integer prime to $J(s)$, and let $p$ be a prime dividing $J(s)$ such that $v_p(J(s))$ is odd. Since $J(s)$ and $J(t)$ are coprime, we have that $p\nmid J(t)$ and hence $\partial_p\langle J(s)\rangle=\langle a\rangle$ for some $a\in\mb{F}_p^\times$ and $\partial_p\langle J(t)\rangle=0$. Thus $\partial_p\langle J(s)\rangle+\partial_p\langle J(t)\rangle$ is not trivial in $\W(\mb{F}_p)$, contradicting Theorem~\ref{thm:over Q}.
\end{ex}

\bibliography{bezout}{}
\bibliographystyle{alpha}
\end{document}